\newtheorem{Thm}{Theorem}
\newtheorem{prop}[Thm]{Proposition}
\newtheorem{cor}[Thm]{Corollary}
\newtheorem{lemma}[Thm]{Lemma}
\theoremstyle{definition}
\newtheorem{Def}[Thm]{Definition}
\newtheorem{expl}{Example}
\newtheorem{rem}{Remark}
\newcommand{\rmd}{\mathrm{d}}
\newcommand{\dist}{\mathrm{dist}}
\newcommand{\rate}{\gamma}
\title{Synchronization and random attractors for reaction jump processes}
\author[b]{ Maximilian Engel\footnote{maximilian.engel@fu-berlin.de}}
\author[b]{Guillermo Olic\'on-M\'endez}
\author[a,c]{Nathalie Unger}
\author[a]{Stefanie Winkelmann\footnote{winkelmann@zib.de}}
\affil[a]{Zuse Institute Berlin, 14195 Berlin, Germany}
\affil[b]{Freie Universit\" at Berlin, Institut f\"ur Mathematik und Informatik, 14195 Berlin, Germany}
\affil[c]{Technische Universit\" at Berlin, 10623 Berlin, Germany}
\begin{document}

\maketitle

\begin{abstract} 
This work explores a synchronization-like phenomenon induced by common noise for continuous-time Markov jump processes given by chemical reaction networks. A corresponding random dynamical system is formulated in a two-step procedure, 
at first for the states of the embedded discrete-time Markov chain and then for the augmented Markov chain including also random jump times.  
We uncover a time-shifted synchronization in the sense that -- after some initial waiting time -- one trajectory exactly replicates another one with a certain time delay. Whether or not such a synchronization behaviour occurs depends on the combination of the initial states. We prove this partial time-shifted synchronization for the special setting of a birth-death process by analyzing the corresponding two-point motion of the embedded Markov chain and determine the structure of the associated random attractor. In this context, we also provide general results on existence and form of random attractors for discrete-time, discrete-space random dynamical systems. 
\end{abstract}

\textbf{Keywords:} chemical reaction networks, random attractors, random periodic orbits, reaction jump processes, synchronization

\textbf{MSC2020:} 37H99, 60J27, 60J10, 92C40

\section{Introduction}

Stochastic models of biochemical reaction dynamics are mostly based on the theory of Markov processes \cite{anderson2011continuous,mcquarrie1967stochastic}. A central role play \textit{reaction jump processes} which model a well-mixed reaction system as a continuous-time Markov process on a discrete state space. The state is given by the number of particles of each involved chemical species, and chemical reactions are modeled as stochastic events which induce jumps in the system's state that occur after exponentially distributed 
sojourn times. 
The temporal evolution of the system's probability distribution is in this case characterized by the well-known \textit{chemical master equation} \cite{gillespie1992rigorous}. 
Besides such reaction jump processes, there exist also modeling approaches using discrete-time Markov chains \cite{ko1991stochastic,huang2019}, stochastic differential equations (SDEs)  or ordinary differential equations (ODEs) \cite{gillespie2000chemical,kurtz1972relationship,kurtz1970solutions}, which approximate the dynamics on a macroscopic level in case of large population sizes, as well as hybrid model recombinations for multiscale reaction systems 
\cite{jahnke2011reduced,zeiser2008simulation,winkelmann2017hybrid,menz2012hybrid}. 
All these approaches for describing and analyzing stochastic phenomena within biochemical or other types of applied contexts have extensively been studied in the literature \cite{winkelmann2020,anderson2015stochastic,wilkinson2018stochastic}. 

\subsection{Background and related work}
The counterpart to stochastic processes within dynamical system theory is given by \textit{random dynamical systems (RDS)}. Here, the origin of uncertainties is considered somewhat differently. In simple terms, the system evolves according to deterministic 
maps which are chosen randomly from a stochastic law. Formally speaking, an RDS $(\theta,\varphi)$ on a metric state space $\mathbb{X}$ (endowed with its Borel $\sigma$-algebra $\mathcal{B}(\mathbb{X})$) and discrete time set $\mathbb{T}=\mathbb{N}_0$ or $\mathbb{Z}$ consists of
\begin{itemize}
    \item a \textit{noise model}, given by a metric dynamical system $(\Omega,\mathcal{F},\mathbb{P},(\theta_n)_{n\in\mathbb{T}})$. By this we mean that $(\Omega, \mathcal{F},\mathbb{P})$ is a probability space and $\theta:=(\theta^n)_{n\in\mathbb{T}}$ is a family of measurable maps $\theta^n:\Omega\rightarrow\Omega$ for which $\theta^{n+m}=\theta^n\circ\theta^m$ for all $n,m\in\mathbb{T}$, and which is invariant with respect to $\mathbb{P}$ (or, $\mathbb{P}$ is $\theta$-invariant). This last statement means that $\theta^n_*\mathbb{P}(\cdot):=\mathbb{P}((\theta^n)^{-1}(\cdot))=\theta(\cdot)$, 
    
    \item a \textit{cocycle map} $\varphi:\mathbb{N}_0\times\Omega\times \mathbb{X}\rightarrow\mathbb{X}$, with $(n,\omega,x)\mapsto \varphi^n_{\omega}(x)$, which is measurable and satisfies the cocycle property over $\theta$, that is for all $x\in\mathbb{X}$, $n,m\in\mathbb{N}_0$, and $\omega\in\Omega$
    \begin{equation}
        \label{eq:cocycle}
        \varphi^0_\omega(x)=x, \qquad \varphi^{n+m}_\omega(x)=\varphi^n_{\theta^m\omega}\circ \varphi^m_{\omega}(x).
    \end{equation}
\end{itemize}
Notice that while the dynamics on the noise space $\Omega$ might be defined for both positive and negative times, this does not need to be the case for the cocycle map $\varphi$ which in our context will only be defined for times on $\mathbb{N}_0$. For a comprehensive theoretical background of RDS we refer to \cite{arnold1998}.

For a discrete-time system on a finite state space the maps are given by deterministic transitions matrices (containing only entries zero and one), and the expectation of the matrix-valued random variable of transitions maps
agrees with the stochastic transition matrix of the corresponding Markov chain. The relation between such finite-state RDS and the related Markov chains has been studied 
by F. Ye et al. \cite{ye2016,ye2019}. Among other things, it has been found that a given finite-state RDS induces a 
unique Markov chain, while one Markov chain might be compatible with several RDS \cite{ye2016}, as already discussed in a general context by Kifer \cite{Kifer86}.
In this sense, the RDS formulation may be seen as a more refined model of stochastic dynamics than the Markov chain: the former gives a precise description of the two-point motion, comparing trajectories with different initial conditions but driven by the same noise allowing for the analysis of \emph{random attractors} \cite{crauel2015}, whereas the latter characterizes the statistics of the one-point motion by means of the transition probabilities.

While RDS representations of Markov chains (discrete in space and time) or SDEs (continuous in space and time) (see e.g.~\cite{arnold1998}) have been studied in the literature, an analogous investigation for continuous-time Markov processes on discrete state spaces is still missing. In the present work, we do a first step in this direction by formulating random dynamical systems corresponding to reaction jump processes as special types of continuous-time Markov processes. 
Our goal is to study questions of synchronization: Given the same noise realization, will trajectories starting at different initial states approach each other in the course of time? Once they coincide at a certain time point, do they stay together forever? Numerical experiments have shown that two realizations of the reaction jump process with distinct starting points (but the same driving noise) may actually resemble each other after some time period in the sense that one of the trajectories appears to be a time-delayed replicate of the other. That is, after some random initial ``finding time", the two process realizations start to wander through the same sequence of states, with identical sojourn times in each of these states, but with a certain time lag with respect to each other. Whether or not this type of trajectory 
replication happens seems to depend in general on the combination of chosen initial states. By means of the RDS presentation of the dynamics, we provide an analytical explanation for this intriguing phenomenon of time-shifted synchronization and its dependency on the initial conditions.  

\subsection{Main results}
For our analysis, we use the fact that a (continuous-time) Markov jump process $(X(t))_{t\geq 0}$ has a discrete-time representation given by the 
\textit{augmented Markov chain} \cite{sikorski2020augmented} which assigns to each discrete index $n$ the random time $T_n$ where the $n$th jump of the process occurs, as well as the state $X_n=X(T_n)$ entered by the process 
at this jump time. The random sequence $(X_n)_{n\in \mathbb{N}_0}$ of states, called  \textit{embedded Markov chain}, is 
a standard (discrete-time) Markov chain on a countable state space. 
In particular, we use an explicit recursive formula for this Markov chain which immediately yields the cocycle of an RDS. We show that the time-shifted synchronization of a time-homogeneous reaction jump process is equivalent to the ``normal" synchronization of the embedded Markov chain, for an appropriate subset of initial conditions. Given that the jump rate constants are time-independent, also the sojourn times within the states will agree once that the states do agree.  

In more detail, we focus on two main examples, providing several general insights on random attractors for discrete state spaces on the way: a simple birth-death process and the Schl\"ogl model with their monostable and bistable structures, respectively,
detecting similarities and differences in the described synchronization behaviour. We obtain the following main results and insights:
\begin{itemize}
    \item For the embedded Markov chain of the birth-death process we prove \emph{partial synchronization} (and, by that, partial time-shifted synchronization for the reaction jump process) in the sense that common-noise trajectories with starting states of the same parity join each other in finite time while initial states with different parity lead to oscillations around each other (Proposition~\ref{PROP:synchronisation} and Corollary~\ref{COR:partial synchronization}).
    
    \item For general RDS corresponding with Markov chains, we relate different forms of random attractors (Theorem~\ref{THM:weakEquiv}) and give conditions for the existence of a (weak) random attractor (Theorem~\ref{THM:existenceWeakAttractor}). We verify these conditions for our examples (Proposition~\ref{PROP:stationary distribution}), prominently using the existence of a unique stationary distribution.
    
    \item We provide a main analytical result for the birth-death case, characterizing the random attractor as a pullback and forward attractor consisting of two random points with distance $1$ that form a random periodic orbit (Theorem~\ref{THM:strongAttractor}), by that also finding the structure of the corresponding sample measures, also called \emph{statistical equilibria} (Proposition~\ref{prop:samplemeasures}).
    
    \item We illustrate numerical insights that the weak attractor for the Schl\"{o}gl model has the same structure as for simple birth-death, apart from the fact that the distance of the two random points is not $1$, mirroring the bistability of the model.
\end{itemize}
Except for the general insights on random attractors on countable state spaces, most of our analytical results are, so far, restricted to the special case of a birth-death process since the absorbing property of the (thick) diagonal for the two-point motion can be used for this case. However, the general structure of the proof may well be extended to more general chemical reaction networks, also with multiple reactants and corresponding random periodic orbits.

Note that the works \cite{ye2016, ye2019, huang2019} mentioned earlier also deal with synchronization of RDSs for Markov chains, and in \cite{huang2019} even partial synchronization is considered. However, the latter approach is focused on linear cocycles for random networks, using the theory of Lyapunov exponents. Our proofs deploy an analysis of the two-point motion and its consequences for the random attractor, and do not require a linear interpretation,
being confronted with an infinite state space. 
Note that Newman's work on synchronization for RDS \cite{Newman2018, Newman2020} achieves general equivalent conditions for synchronization to occur, which can typically be verified via the \textit{maximal Lyapunov exponent} when the state space is a smooth manifold. For the class of examples considered in this work, the equivalence of these conditions, adjusted to the problem of partial synchronization, will automatically appear in a straight-forward manner.

\subsection{Structure of the paper}
The remainder of the paper is structured as follows. In Sec.~\ref{sec:RJP and RDS}, we introduce reaction jump processes and the corresponding augmented and embedded Markov chains, interpreting the latter as random dynamical systems, and relate their different forms of (partial) synchronization. Sec.~\ref{sec:synchro} is dedicated to proving partial synchronization of the birth-death chain, by using arguments on the two-point motion. In Sec.~\ref{sec:attractors}, we discuss general properties of weak, pullback and forward attractors for the discrete setting (Sec.~\ref{sec:attractors_gen}), show a general result on the existence of weak attractors including the setting of reaction problems (Sec.~\ref{sec:existence_weak_attr}), characterize the structure of this attractor for the birth-death case as a random periodic orbit (Sec.~\ref{sec:birth-death_attr}) and give illustrations of the two-point motion and the stationary, statistical behavior also for the more complicated Schl\"{o}gl model (Sec.~\ref{sec:Schloegl}). Finally, we provide a conclusion with outlook in Sec.~\ref{sec:conclusion}.

\section{Reaction jump processes and random dynamical systems}
\label{sec:RJP and RDS}

In this section, we introduce the reaction network under consideration and formulate the corresponding stochastic dynamics. At first (in Sec.~\ref{sec:RJP}), the pathwise formulation of the reaction jump process is given, including two exemplary reaction networks which will be extensively studied in this work. The related random dynamical systems will be formulated in Sec.~\ref{Sec:RDS} and Sec.~\ref{Sec:RDS2}.  

\subsection{The reaction jump process}\label{sec:RJP}

We consider the standard setting of well-mixed stochastic chemical reaction dynamics \cite{winkelmann2020}: 
There is a system of particles with $L\in \mathbb{N}$ different types/species $\mathcal{S}_1,\dotsc,\mathcal{S}_L$. The particles interact by $K\in \mathbb{N}$ chemical reactions $\mathcal{R}_1,\dotsc,\mathcal{R}_K$ given by
\[   \mathcal{R}_k : \quad \sum_{l=1}^L s_{lk} \mathcal{S}_l \to  \sum_{l=1}^L s'_{lk} \mathcal{S}_l, \]
where the stoichiometric coefficients $s_{kl},s'_{lk}$ are non-negative integers. The state of the system is given by a vector $x=(x_l)_{l=1,...,L}\in \mathbb{N}_0^L$ with $x_l$ counting the number of particles of species $l$. Each reaction induces a jump in the state of the form $x \mapsto x +\nu_k$, where $\nu_k=(\nu_{1k},...,\nu_{Lk})\in \mathbb{Z}^L$ is the state-change vector  given by $\nu_{lk}:=s'_{kl}-s_{lk}$. Given a state $x$, the reaction $\mathcal{R}_k$ takes place at rate $\alpha_k(x)$, where $\alpha_k:\mathbb{N}_0^L \to [0,\infty)$ is the corresponding propensity function.

The resulting reaction jump process (RJP) on $\mathbb{X}=\mathbb{N}_0^L$ has the path-wise representation 
\[ X(t) = X(0) + \sum_{k=1}^K \mathcal{U}_k\left(\int_0^t \alpha_k(X(s)) ds\right)\nu_k,  \]
where $\mathcal{U }_k$ are independent unit-rate Poisson processes. 
This process (and equivalently a more general Markov jump process) is fully characterized by the random jump times $T_n$, $n=1,2,...$, at which the jumps (here reactions) take place and the states $X_n:=X(T_n)$ that are entered at the jump times, namely by
\[ X(t) = X_n \quad \mbox{for} \; T_n \leq t < T_{n+1}, \]
with $T_0 = 0$ and $X_0=X(0)$. 
That is, we can consider a division of the Markov jump process into the process of \textit{jump times} $(T_n)_{n \in \mathbb{N}_0}$ with values in $[0, \infty)$ and the process of the states $(X_n)_{n \in \mathbb{N}_0}$ in $\mathbb{X}$ which is called the \textit{embedded Markov chain}. The discrete-time process $(X_n,T_n)_{n \in \mathbb{N}_0}$ is called the \textit{augmented Markov chain} \cite{sikorski2020augmented}.  


The jump times and the states of the RJP are recursively given by \begin{align} 
    T_{n+1} & =  T_n  + \tau(X_n) \label{T_n+1} \\
    X_{n+1} & =  X_{n} + \nu_{ \kappa(X_n)} \label{X_n+1},
\end{align}
with $T_0=0$, $X_0=X(0)$, where $\tau(x)$ is an exponentially distributed random variable with mean $1/x$ and $\kappa(x)\in \{1,...,K\}$ is a random variable with point probabilities $\alpha_k(x)/\sum_{l=1}^K \alpha_l(x)$ for $k=1,...,K$. 
It is well known (see \cite{gillespie1976general,gillespie1977exact}) that $\tau$ and $\kappa$ can be realised by taking independent, uniformly distributed random numbers $r,q \sim U(0,1)$ and setting
\begin{equation}\label{tau}
    \tau(x,r) = \frac{1}{\sum_{k=1}^K\alpha_k(x)}\log \left(\frac{1}{r}\right),
\end{equation}
and  $\kappa(x,q)$ is the smallest integer satisfying 
\begin{equation} \label{kappa}
    \sum_{k=1}^{\kappa(x,q)} \alpha_k(x) > q\sum_{k=1}^K \alpha_k(x).
\end{equation}
Note that on a pathwise level $\kappa(x,q)$ depends on the order of reaction indices.

\begin{expl}[Birth-death process]\label{ex:birth-death}
As a basic example which will be analyzed in detail in Sec.~\ref{sec:birth-death_syn} and Sec.~\ref{sec:birth-death_attr} we consider the standard birth-death process of a single species $\mathcal{S}$ given by $K=2$ reactions
\begin{equation*}
    \mathcal{R}_1:  \emptyset  \stackrel{\rate_1}{\longrightarrow} \mathcal{S}, \quad
    \mathcal{R}_2: \mathcal{S} \stackrel{\rate_2}{\longrightarrow} \emptyset.
\end{equation*}
Here, $\rate_1,\rate_2>0$ are rate constants and the corresponding propensity functions are given by the \textit{law of mass action} as
\[ \alpha_1(x)=\rate_1, \quad \alpha_2(x)=\rate_2 x.\]
The state space of the resulting jump process is given by $\mathbb{X}=\mathbb{N}_0$. Consequently, also the state-change vectors $\nu_k$ are actually scalar and given by $\nu_1=1$ and $\nu_2=-1$. From \eqref{kappa} we can deduce that 
\begin{equation} \label{eq:kappa}
    \kappa(x,q)= \begin{cases} 1 & \textup{if } q<\frac{\rate_1}{\rate_1+\rate_2x},
\\ 2 & \textup{otherwise,}
\end{cases}
\end{equation}
which means that reaction $\mathcal{R}_1$ takes place with probability $\frac{\rate_1}{\rate_1+\rate_2x}$ given that the system is in state $x$, while $\mathcal{R}_2$ takes place with probability $1-\frac{\rate_1}{\rate_1+\rate_2x}$.

Under an appropriate scaling of the propensity functions $\alpha_1$ and $\alpha_2$, 
one may derive the corresponding \emph{reaction rate equation} governing the dynamics
of the concentration $C(t) = \frac{X^V(t)}{V}$ for the large volume limit
$V\to \infty$ (cf.~\cite{winkelmann2020}). This is an ordinary differential equation (ODE), given by
\begin{equation}\label{RRE:birth-death}
    \frac{\rmd C(t)}{\rmd t} = - \gamma_2 C(t)+ \gamma_1,
\end{equation}
with globally attracting equilibrium at $C = \frac{\gamma_1}{\gamma_2}$ (see Figure~\ref{fig:ODE}).
\end{expl}
%
%
\begin{expl}[Schl\"ogl model] \label{ex:schloegl}
The other main example of this work is the Schlögl model, a chemical reaction network exhibiting bistability, cf.~e.g.~\cite{endres2017entropy,schlogl1972chemical,matheson1975stochastic} and see Sec.~\ref{sec:Schloegl} for a detailed discussion. 
Again we only have one species $\mathcal{S}$ with the following reactions
\begin{equation*} 
\begin{array}{lcl}
\mathcal{R}_1:  \emptyset  \stackrel{\rate_1}{\longrightarrow} \mathcal{S}, & \quad &\mathcal{R}_2: \mathcal{S} \stackrel{\rate_2}{\longrightarrow} \emptyset,
\\\mathcal{R}_3:  2\mathcal{S}  \stackrel{\rate_3}{\longrightarrow} 3\mathcal{S}, &\quad
&\mathcal{R}_4: 3\mathcal{S} \stackrel{\rate_4}{\longrightarrow} 2\mathcal{S},
\end{array}
\end{equation*}
for rate constants $\rate_1, \rate_2, \rate_3, \rate_4 > 0$ and $x \in \mathbb{N}_0$.
The corresponding standard mass action propensities are given by
\begin{equation*} 
\begin{array}{lcl}
\alpha_1(x) = \rate_1, & \quad &\alpha_2(x)=\rate_2 x,,
\\\alpha_3(x) = \rate_3 x(x-1), &\quad
&\alpha_4(x)=\rate_4x(x-1)(x-2).
\end{array}
\end{equation*}
The Schlögl model has the state-change vectors $\nu_1, \nu_3=1$ and $\nu_2, \nu_4=-1$.

The corresponding reaction rate ODE is given by
\begin{equation}\label{RRE:Schloegl}
    \frac{\rmd C(t)}{\rmd t} = - \rate_4 C(t)^3 + \rate_3 C(t)^2 - \rate_2 C(t)+ \rate_1,
\end{equation}
which -- depending on the value of the reaction rates -- may exhibit an unstable equilibrium and two stable equilibria (see Figure~\ref{fig:ODE}).
\end{expl}
The two examples demonstrate two fundamentally different patterns in terms of the large-volume behavior of the process: whereas in Example~\ref{ex:birth-death} the reaction rate equation has one globally attracting equilibrium such that all
trajectories synchronize to the same concentration, in Example~\ref{ex:schloegl}
there are two (locally) attracting equilibria separated by an unstable equilibrium such that different ODE-trajectories may or may not synchronize depending on their initial conditions, see Figure~\ref{fig:ODE}.  

This observation prompts interest in the synchronizing behavior of the underlying reaction jump process: Given the same random numbers but different initial states, will the jump times and states given by \eqref{T_n+1}-\eqref{X_n+1} approach each other in the course of time? Can we observe different synchronization behavior in Examples~\ref{ex:birth-death} and \ref{ex:schloegl}?
To give a systematic answer to these questions, we will analyze the reaction jump process in terms of the corresponding RDS which gives a natural approach to comparing trajectories with different initial conditions but driven by the same noise realizations.

\begin{figure}
    \centering
    \begin{subfigure}[b]{0.49\textwidth}
        \centering
        \includegraphics[width=\textwidth]{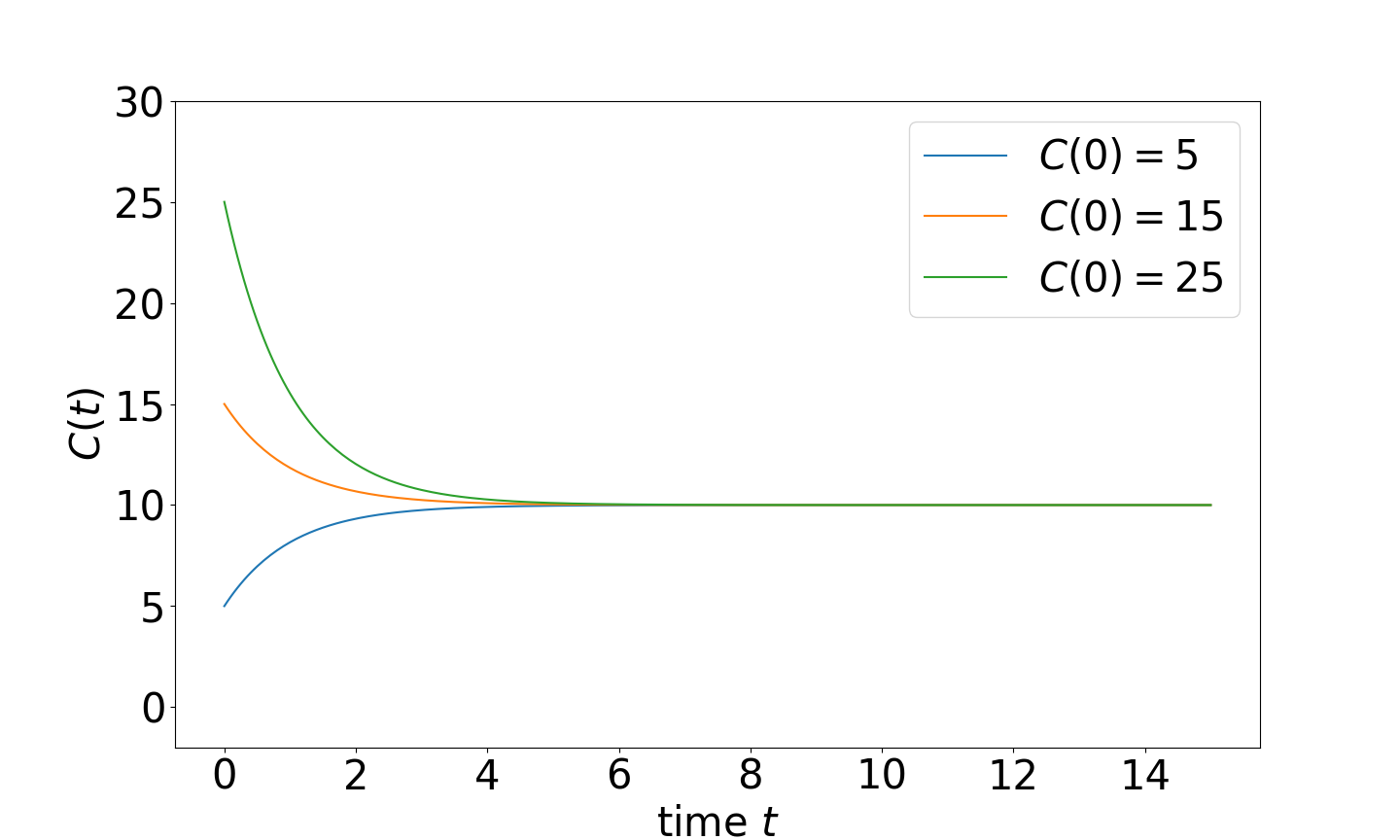}
        \caption{}
    \end{subfigure}
    \hfill
    \begin{subfigure}[b]{0.49\textwidth}
        \centering
        \includegraphics[width=\textwidth]{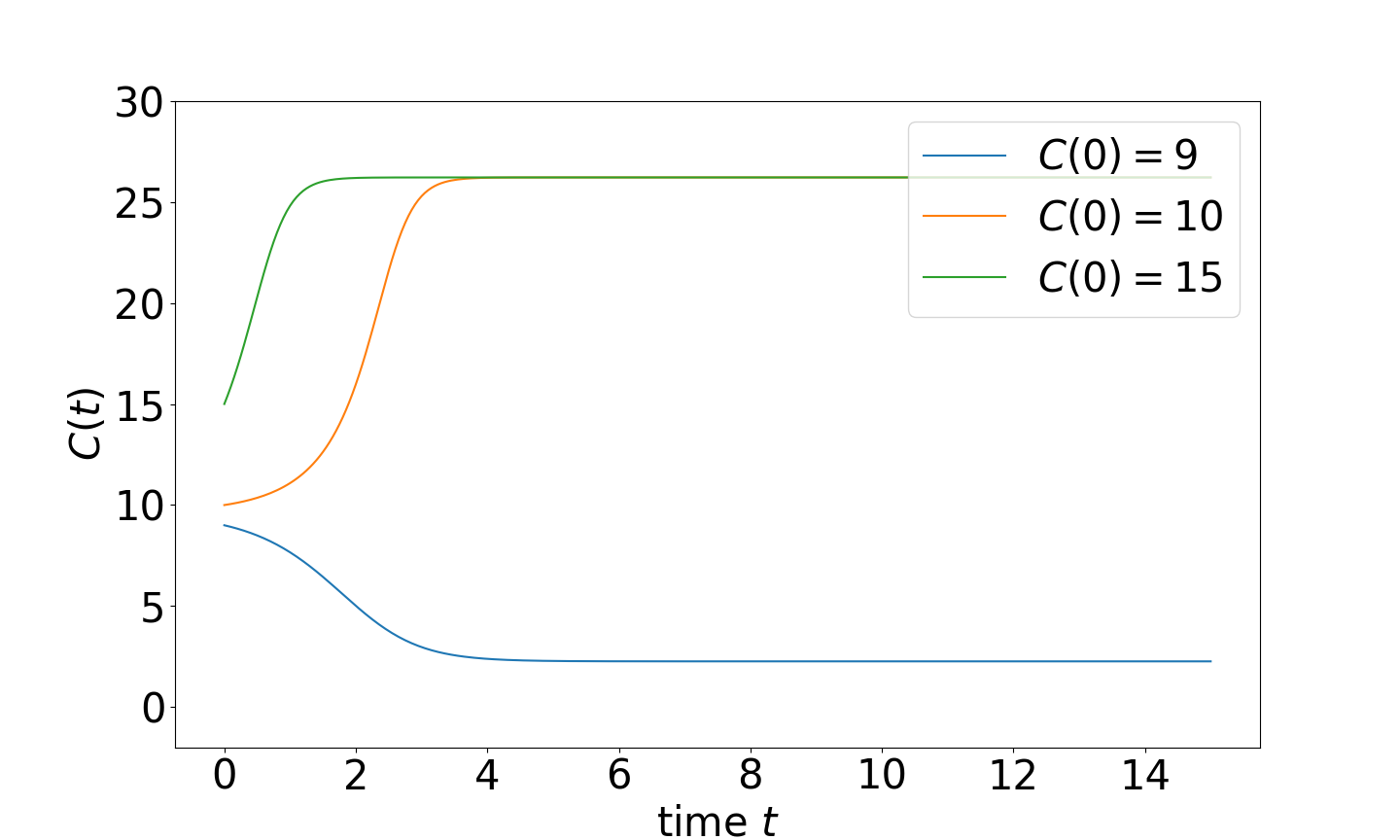}
        \caption{}
    \end{subfigure}
    \hfill
\caption{Solution of the reaction rate equation for (a) the birth-death process, see Eq.~\eqref{RRE:birth-death}, and (b) the Schlögl model, see Eq.~\eqref{RRE:Schloegl}, each for three different initial states $C(0)$. (a) Rate constants $\rate_1=10, \rate_2=1$. (b) Rate constants $\rate_1=6, \rate_2=3.5, \rate_3=0.4, \rate_4=0.0105$. Unstable equilibrium given by $c \approx 9.6201$.}
    \label{fig:ODE}
\end{figure}

\subsection{RDS formulation of the embedded Markov chain}\label{Sec:RDS}

At first, we formulate the setting of a RDS for the embedded Markov chain $(X_n)_{n\geq 0}$, given by $X_n:=X(T_n)$, as a discrete-time stochastic process. 

The noise space $\mathcal{Q}_+$ of the RDS is chosen as
\begin{equation*}
    \mathcal{Q}_+ = \{q=(q_n)_{n \in \mathbb{N}_0} : q_n \in [0,1]\}. 
\end{equation*}
We endow $\mathcal{Q}_+$ with the Borel $\sigma$-algebra $\sigma(\mathcal{Q}_+)$ generated by its cylinder sets, and with the infinite product probability measure $\mathbb{P}=\lambda^{\mathbb{N}_0}$, where $\lambda$ denotes the Lebesgue measure on $[0,1]$. On this probability space $(\mathcal{Q}_+, \sigma(\mathcal{Q}_+), \mathbb{P})$ we define the shift map $\theta: \mathcal{Q}_+ \to \mathcal{Q}_+$ and its iterates by 
\begin{equation}
\label{shiftmap}
\theta(q_0,q_1,...) = (q_1,q_2,...), \qquad \theta^n:=\underbrace{\theta\circ\cdots\circ\theta}_{n \textup{ times }}
\end{equation}
Since $\theta$ is invariant with respect to $\mathbb{P}$, the tuple $(\mathcal{Q}_+,\sigma(\mathcal{Q}_+),\mathbb{P},(\theta)_{n\in\mathbb{N}_0})$ constitutes our underlying noise model.
Throughout this work we will use interchangeably the short-hand notation
\begin{equation*}
  \mathbb{P}(S(q)) =  \mathbb{P}(\{q\in \mathcal{Q}_+:S(q) \textup{ holds} \}),
\end{equation*}
where $S(q)$ is a $q$-dependent statement. For any $q\in \mathcal{Q}_+$, consider the transition map $f_q:\mathbb{X} \to \mathbb{X}$ defined by
\begin{equation}\label{fq}
    f_q(x) := x + \nu_{ \kappa(x,q_0)},
\end{equation}
where $f_\cdot$ takes a whole sequence $q=(q_n)_{n \in \mathbb{N}_0}$ as an input but only evaluates the first entry of $q$, namely $q_0$, in $\kappa(x, \cdot)$ defined in \eqref{kappa}.
Therefore $f_{\theta^n q}(X_n)$ coincides with the right-hand side of the recursion \eqref{X_n+1}. Given a fixed order of the reactions, the transition map $f_q$ is unique. 
The RDS of the embedded Markov chain $(X_n)_{n\geq 0}$ is given by the tuple $(\theta,\varphi)$ with the cocycle map $\varphi:\mathbb{N}_0 \times \mathcal{Q}_+ \times \mathbb{X} \to \mathbb{X}$ defined by 
\begin{equation} \label{eq:iterates_cocycles}
    \varphi^n_q(x) 
    = \begin{cases} f_{\theta^{n-1}q}\circ\cdots\circ f_{q}(x)& \textup{if } n \geq 1,
\\ x & \textup{if } n =0.
\end{cases}
\end{equation}
Noting that for any $n\in\mathbb{N}_0$ we have that $\theta^nq = (\theta \circ ... \circ \theta) q =(q_n,q_{n+1}, ...)$ holds for the shift map $\theta$ given in \eqref{shiftmap}, it is straightforward to verify that the cocycle property \eqref{eq:cocycle} holds.

For each initial state $x \in \mathbb{X}$ and each $q\in \mathcal{Q}_+$ we obtain the orbit of states $(x_n)_{n \in \mathbb{N}_0} =(\varphi_{q}^n(x))_{n \in \mathbb{N}_0}$ from the \textit{random difference equation} 
\begin{equation} \label{orbit_x}
x_{n+1}= f_{\theta^nq}(x_n) ,  \: n \in \mathbb{N}_0 \quad \text{and} \quad x_0 = x \in \mathbb{X}.
\end{equation}
Given an initial state $x$, we have the relation
\begin{equation*}
   \mathbb{P}(X_n \in A|X_0 =x) = \mathbb{P}(\varphi^n_{q}(x) \in A)  
\end{equation*}
for any $A\in \mathcal{B}(\mathbb{X})$.
Note that by virtue of a fixed order of reaction indices assumed for \eqref{kappa} and of the explicit recursion formula~\eqref{X_n+1}, the Markov chain defines our RDS in a unique way; this is generally not the case as there may be different versions of a Markov chain purely characterized by its transition probabilities; for a general discussion see also \cite{Kifer86}.

\subsection{RDS formulation of the augmented Markov chain}\label{Sec:RDS2}

The RDS $(\theta,\varphi)$ introduced before captures only the states $X_n$ entered by the reaction jump process at the jump times $T_n$. In the following, we formulate another RDS which takes account also of the jump times $T_n$ by considering the augmented Markov chain, see \eqref{T_n+1}-\eqref{X_n+1}.

The state space of the augmented Markov chain is given by $\mathbb{X} \times [0,\infty)$ with the $\sigma$-algebra given by $\mathcal{P}(\mathbb{X}) \otimes \mathcal{B}([0,\infty))$, where $\mathcal{P}(\mathbb{X})$ denotes the power set of $\mathbb{X}$. 
In analogy to $f_q$ defined in \eqref{fq}, we consider for any $r \in \mathcal{Q}_+$ the mapping $g_r:\mathbb{X} \times [0,\infty) \to [0,\infty)$ with 
\begin{equation}
\label{g_r}
    g_r(x,t) : = t + \tau(x,r_0)
\end{equation}
for $\tau$ given in \eqref{tau}, such that $T_{n+1} = g_{\theta^n r}(X_n,T_n)$ in \eqref{T_n+1}. However, in contrast to $f_q$, this mapping depends not only on $t \in [0,\infty)$ but also on the state $x \in \mathbb{X}$. 
So, the corresponding cocycle map has to depend on state and time, as well as on $r \in \mathcal{Q}_+$ and $q \in \mathcal{Q}_+$. 
Therefore, we introduce the product noise space 
\begin{equation*}
\Omega_+=\mathcal{Q}_+ \times \mathcal{Q}_+
 = \{\omega=(\omega_n)_{n\in \mathbb{N}_0}: \omega_n =(q_n,r_n), q_n,r_n \in [0,1] \}
\end{equation*}
endowed with the product $\sigma$-algebra $\sigma(\mathcal{Q}_+) \otimes \sigma(\mathcal{Q}_+)$ and the product measure $\mathbb{P}_{\Omega_+} = \lambda^{\mathbb{N}_0}\otimes \lambda^{\mathbb{N}_0}$. By abusing the notation the corresponding shift map $\theta$ acts on both entries of a $\omega \in {\Omega_+}$:
\begin{equation*}
    \theta \omega = \theta (q_n,r_n)_{n\in \mathbb{N}_0} = (q_{n+1},r_{n+1})_{n\in \mathbb{N}_0}.
\end{equation*}
The transformation/time-one mapping for the augmented Markov chain is given by $h_\omega : \mathbb{X} \times [0,\infty)  \to \mathbb{X} \times [0,\infty)$ with
\begin{equation}\label{eq:h}
 h_{\omega}(x,t)  :=  (f_{q}(x),g_{r}(x,t)), 
\end{equation}
where $f_q$ and $g_r$ are given in \eqref{fq} and \eqref{g_r}.
The cocycle map $\psi: \mathbb{N}_0 \times \Omega \times \mathbb{X} \times [0,\infty) \to \mathbb{X} \times [0,\infty)$ is given by 
\begin{equation*}
       \psi^n_\omega(x,t) = \begin{cases} h_{\theta^{n-1}\omega}\circ\cdots\circ h_{\omega}(x,t)& \textup{if } n \geq 1,
\\ (x,t) & \textup{if } n =0,
\end{cases}
\end{equation*}
and fulfills the cocycle property \eqref{eq:cocycle}. 
We obtain
\begin{equation*}
    \mathbb{P}_{\Omega_+}\Big((X_n,T_n)\in A \;\big\vert\; (X_0,T_0)=(x,t)\Big) = \mathbb{P}_{\Omega_+}(\psi_{\omega}^n(x,t)\in A)
\end{equation*}
for $A \in \mathcal{P}(\mathbb{X})\otimes \mathcal{B}\big([0,\infty)\big)$ and a given initial state $x$ and starting time $t$.

We note that the first component of $\psi_{\omega}^{n}$ coincides with the cocycle map $\varphi_{q}^n$ of the embedded Markov chain, i.e. we have  $(\psi_{\omega}^{n}(x,t))_1= \varphi_{q}^n(x)$ for $\varphi_{q}^n$ given in \eqref{eq:iterates_cocycles} and $q_n=(\omega_n)_1$, while the second component $(\psi_{\omega}^n(x,t))_2$ referring to the time points cannot be considered separately as a cocycle.

\paragraph{Continuous-time process realizations}
By means of the RDS $\psi_{\omega}^{n}$ of the augmented Markov chain, we can introduce a version of the continuous-time  Markov jump process $(X(t))_{t\geq 0}$ starting at time $t_0=0$ in $X(0)=x_0$ by
\begin{equation}\label{eq:Phi}
    \Phi^t_\omega(x_0) :=\varphi^n_q(x_0) \quad \mbox{for} \quad (\psi^{n}_\omega (x_0,0))_2 \leq t < (\psi^{n+1}_\omega (x_0,0))_2.
\end{equation}
for $\omega=(q,r)$. This is helpful for illustrating the dynamics: In Fig~\ref{fig:Phi}, common noise realizations of the continuous-time birth-death process given in Example~\ref{ex:birth-death} are depicted for different initial states $x_0\neq y_0$. As we can see, the two realizations seem to approach each other -- with a certain time-delay -- given that the difference $x_0-y_0$ of the initial states is even (see Fig~\ref{fig:Phi}(a)), while this is not the case for an odd difference $x_0-y_0$ (see Fig~\ref{fig:Phi}(b)). This observation motivates to formulate and analyze the synchronization behavior of random dynamical systems for the reaction systems under consideration, which we will do in the following section. 

%
Importantly, we note that $\Phi^t_\omega(x_0)$, as given in~\eqref{eq:Phi}, does not satisfy the cocycle property and, hence, the continuous-time Markov jump process is itself not an RDS in this formulation. The easiest way to observe this is that there are instances of $\Phi^t_\omega(x_0) = \Phi^t_\omega(y_0)$ but $\Phi^{t+s}_\omega(x_0) \neq \Phi^{t+s}_\omega(y_0)$ for some $t,s >0$, $x_0\neq y_0$. However, the information from our RDS analysis of the augmented Markov chain is insightful in terms of characterizing the continuous-time process, as illustrated in Figure~\ref{fig:Phi} and the following results on time-shifted synchronization.
We additionally emphasize that our construction illustrates an intriguing lack of commutativity in the following sense: the Markov jump process admits a version that corresponds to the augmented Markov chain which directly induces an RDS. This RDS can be related back to the original process via~\eqref{eq:Phi} giving a version of the Markov jump process which, however, does not satisfy the cocycle property and is therefore not part of a continuous-time RDS itself. In summary, the RDS structure lies in the space-time version of the reaction rate process, revealing also relevant information about this process as we will see in the following. 

\begin{figure}
    \centering
    \begin{subfigure}[b]{0.49\textwidth}
        \centering
        \includegraphics[width=\textwidth]{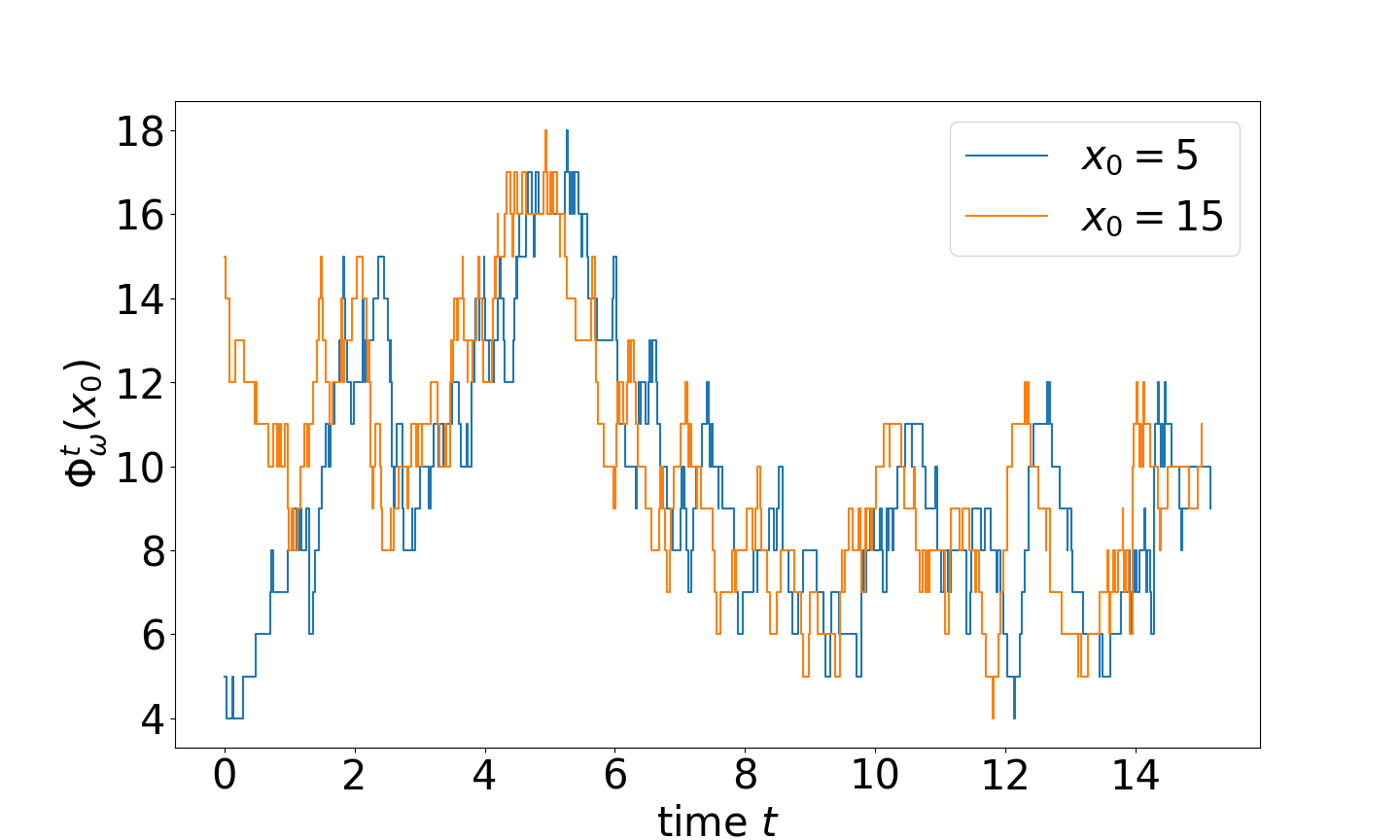}
        \caption{}
    \end{subfigure}
    \hfill
    \begin{subfigure}[b]{0.49\textwidth}
        \centering
        \includegraphics[width=\textwidth]{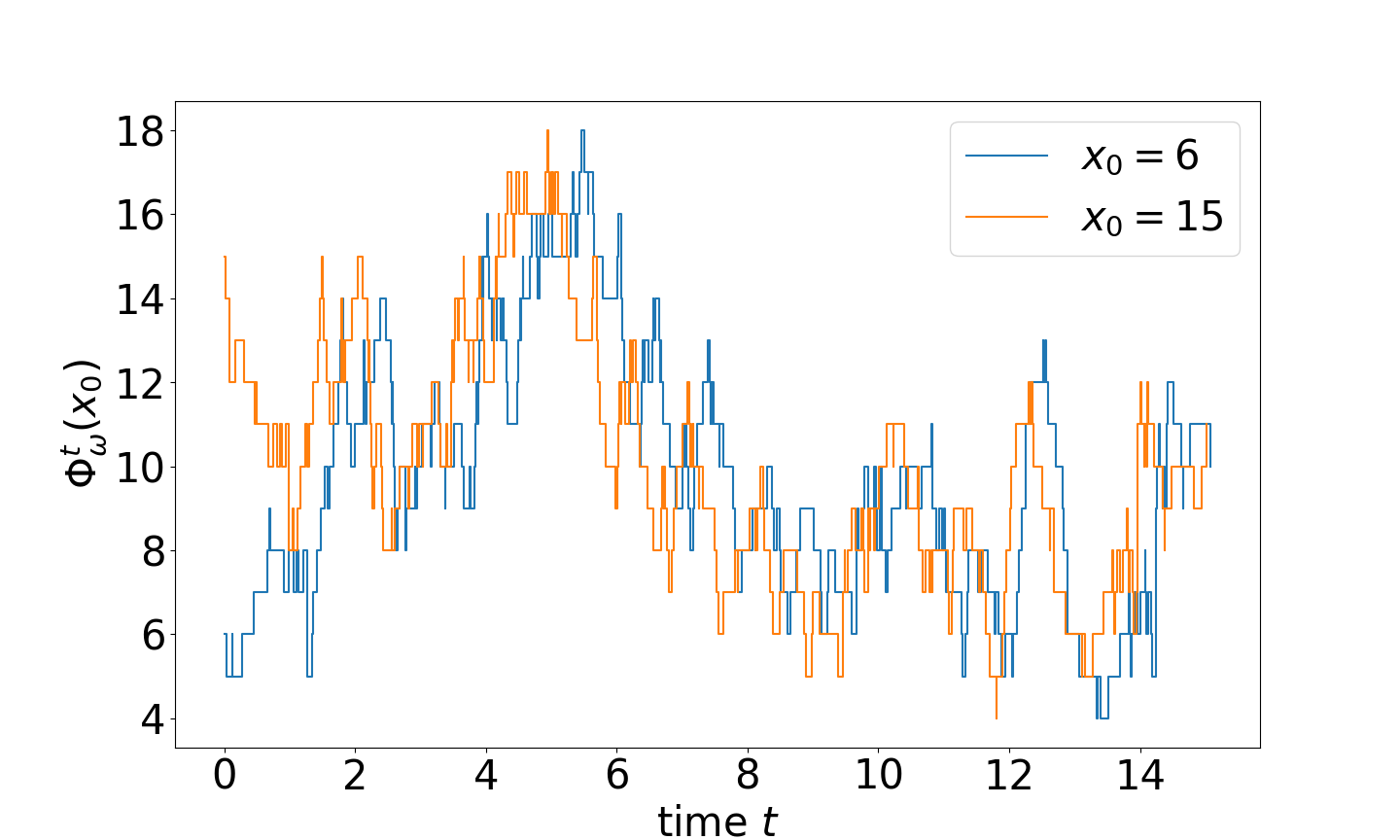}
        \caption{}
    \end{subfigure}
    \hfill
\caption{Continuous-time realizations $\Phi^t_\omega(x_0)$ given in \eqref{eq:Phi} for the birth-death process of Example~\ref{ex:birth-death}, driven by the same noise $\omega$. Realizations for two initial values with (a) even distance and (b) odd distance. In (a), the orange trajectory seems to become a time-delayed copy of the blue one, while this is not the case in (b). The rate constants are chosen as $\rate_1=10$, $\rate_2=1$.  }
    \label{fig:Phi}
\end{figure}

\section{Synchronization of reaction jump processes}
\label{sec:synchro}

In the following, we introduce the terms \textit{synchronization} and 
\textit{partial synchronization} for the random dynamical systems under consideration.
We analyze the synchronizing properties of the birth-death process given in Example~\ref{ex:birth-death} as well as of the Schlögl model of Example~\ref{ex:schloegl}.

\subsection{General formulation} \label{sec:syn}

Analogously to \cite{huang2019}, we say that an RDS $(\theta,\varphi)$ on $\mathbb{X}=\mathbb{N}_0^L$ is \emph{synchronizing in $S\subset\mathbb{X}$} (or, simply, \emph{synchronizing} when $S=\mathbb{X}$) if 
for every two different initial states $x,y \in S$ and $\mathbb{P}$-a.e. $q \in \mathcal{Q}_+$ there exists a number $n_0\equiv n_0(x,y, q) \in \mathbb{N}_0$ such that
\begin{equation}  \label{synch}
    \varphi_{q}^{n_0}(x) = \varphi_{q}^{n_0}(y).
\end{equation}
It follows from the cocycle property that if \eqref{synch} holds for some $n_0\in\mathbb{N}$, it is true for any other $n\geq n_0$. We say that the RDS $(\theta,\varphi)$ is \textit{partially synchronizing} if there exists a partition $\xi=\{W_0, \dotsc, W_{p-1}\}$ of $\mathbb{X}$ such that 
$(\theta,\varphi)$ is synchronizing in each $W_i\in\xi$.

For a fixed $q \in \mathcal{Q_+}$ and two different initial states $x,y \in \mathbb{X}$ the process $(\varphi_{q}^n(x), \varphi_{q}^n(y))_{n \in \mathbb{N}_0}$ in the product space $\mathbb{X}^2$ is called the \textit{two-point motion}. 
Let $\Delta$ denote the diagonal in $\mathbb{X}^2$, i.e.
\begin{equation}\label{def:diagonal}
    \Delta:=\{(x, y) \in \mathbb{X}^2 : x= y\}.
\end{equation}
Hence the RDS is synchronizing if and only if the two-point motion reaches the diagonal $\Delta$ at a time index $n_0(x,y, q)$.


\begin{rem}
    \label{RMK:2pointSynch implies Whole}
        Since $\mathbb{X}$ is discrete, in order to show that an RDS $(\theta,\varphi)$ is synchronizing in $S$ it suffices to show that for every two initial states $x,y\in S$ there is a full probability measurable set $\mathcal{Q}_{x,y}$ in which \eqref{synch} holds. Indeed, for each $x,y\in\mathbb{X}$ consider the measurable sets
        \[
          \mathcal{Q}_{x,y}=\bigcup_{n=1}^{\infty}\lbrace q\in\mathcal{Q}_+ : \varphi^n_q(x)=\varphi^n_q(y)\rbrace.
        \]
and assume that $\mathbb{P}(\mathcal{Q}_{x,y})=1$. We can thus take $\mathcal{Q}_{S}=\bigcap_{x,y\in  S}\mathcal{Q}_{x,y}$ and \eqref{synch} holds for every $x,y\in \mathbb{X}$ and $q\in\mathcal{Q}_S$, where $\mathbb{P}(\mathcal{Q}_S)=1$. 

Furthermore, if an RDS is partially synchronizing then for each $W_i\in \xi$ we consider the corresponding sets $\mathcal{Q}_{W_i}$. By considering $\hat{Q}=\bigcap_{i=0}^{p-1}Q_{W_i}$, we can always assume without loss of generality that the set is the same for each element of the partition.
\end{rem}

\paragraph{Time-shifted synchronization}

We observe that the synchronization of the RDS $(\theta,\varphi)$ of the embedded Markov chain directly implies a \textit{time-shifted synchronization} of the RDS $(\theta,\psi)$ of the augmented Markov chain in the following sense.
For every two different initial states $x,y \in S$, an initial time $t \in [0,\infty)$ and $\mathbb{P}_{\Omega_+}$-a.e. $\omega \in \Omega$ there exists a number $n_0:=n_0(x,y,t, \omega) \in \mathbb{N}_0$ and a value $R:=R(x,y, t, \omega)\geq 0$ such that
\begin{equation} 
\label{time-shifted_synch_1}
\begin{cases}
    \varphi_{q}^n(x)=\varphi_{q}^n(y)
    \\ \big|(\psi_{\omega}^n(x,t))_2 -(\psi_{\omega}^n(y,t))_2\big|=R
\end{cases}    
 \text{for all } n \geq n_0.
\end{equation}
This means that from a certain time point, the states $X_n$ of two realizations of the augmented Markov chain actually coincide, while for the jump times only the differences $T_{n+1}-T_n$ become the same.  Note that we are considering two different initial states $x,y \in S \subset \mathbb{X}$, but start with both at the same initial time $t \in [0, \infty)$.  Usually this initial time is $t=0$. If \eqref{time-shifted_synch_1} holds for all $x,y$ in each component of a partition of $\mathbb{X}$ we analogously speak of \textit{partial time-shifted synchronization}. 


The fact that the (partial) time-shifted synchronization of the RDS $(\theta,\psi)$ follows from the (partial) synchronization of the RDS $(\theta,\varphi)$ of the related embedded Markov chain is because the first component $f_q(x)$ of the time-one mapping $h_\omega$ is independent of $t$, see \eqref{eq:h}. 
As a consequence, for analyzing the synchronization properties of the augmented Markov chain (and with it the synchronization properties of the continuous-time reaction jump process) it suffices to consider the corresponding embedded Markov chain. 

We proceed by analyzing the synchronization properties for the special case of the birth-death process. 

\subsection{Synchronization of the birth-death process} \label{sec:birth-death_syn}

We consider the embedded Markov chain of the birth-death process defined in Example~\ref{ex:birth-death}, which for simplicity we refer to it simply as \textit{birth-death chain} in the following\footnote{One should bear in mind that in many references (see e.g. \cite{durrett2019probability}) a birth-death chain is a general Markov chain in $\mathbb{N}_0$ where the only possible transitions from $x\in\mathbb{N}$ are $x+1$ or $x-1$. In this sense the embedded Markov chain of the Schlögl model is also a birth-death chain but we will distinguish the two by this choice of terminology.}. 
Its transition map $f_q:\mathbb{N}_0 \to \mathbb{N}_0$ can be chosen as
\begin{equation*}
    f_q(x)= \begin{cases} x+1 \quad \text{if } q_0< \frac{\rate_1}{\rate_1+\rate_2 x},
    \\x-1 \quad \text{otherwise,}
    \end{cases}
\end{equation*}
for $q \in \mathcal{Q}_+$, which directly follows from Eq.~\eqref{eq:kappa}. 
Recall that the choice of $f_q$ is not unique as long as the order of reaction indices it not fixed in \eqref{kappa}. A reordering of the reactions would lead to $f_q(x) = x-1$ if $q_0 <\frac{\rate_2 x}{\rate_1+\rate_2 x} $ and $f_q(x) = x+1$ otherwise, defining an RDS with the same statistics and, by symmetry, also the same topological properties that we will study throughout the rest of the paper.

Our goal is to show that the RDS of the birth-death chain partially synchronizes. 
For this purpose, 
we consider the two-point motion $(\varphi_{q}^n(x_0), \varphi_{q}^n(y_0))_{n \in \mathbb{N}_0}$ of the birth-death chain, which is depicted in  Figure~\ref{fig:two-point motion} for two different pairs of initial states.
As a first step, we will make clear through the transition probabilities of the two-point motion that the \textit{thick diagonal}
\begin{equation}\label{D}
	 \mathbb{D}:=\big\{ (x,y)\in \mathbb{N}_0^2 : y\in\lbrace x-1,x,x+1\rbrace \big\}
\end{equation}
is \textit{forward invariant} for the two-point motion, \textit{i.e.} $(\varphi^n_q(x),\varphi^n_q(y))\in \mathbb{D}$ for all $n\in\mathbb{N}$ if $(x,y)\in \mathbb{D}$; see Figure~\ref{fig:BDP_abstract} for an illustration. We prove partial synchronization for the birth-death chain via Lemma~\ref{Lemma_I}, Proposition~\ref{PROP:synchronisation}, and Corollary~\ref{COR:partial synchronization}.

\begin{figure}
    \centering
       \begin{subfigure}[b]{0.49\textwidth}
        \centering
        \includegraphics[width=\textwidth]{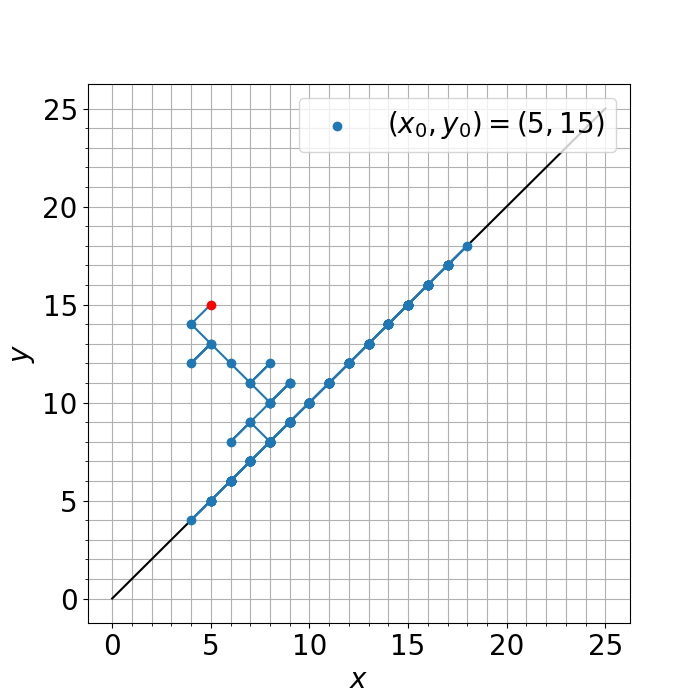}
        \caption{}
    \end{subfigure}
    \hfill
     \begin{subfigure}[b]{0.49\textwidth}
        \centering
        \includegraphics[width=\textwidth]{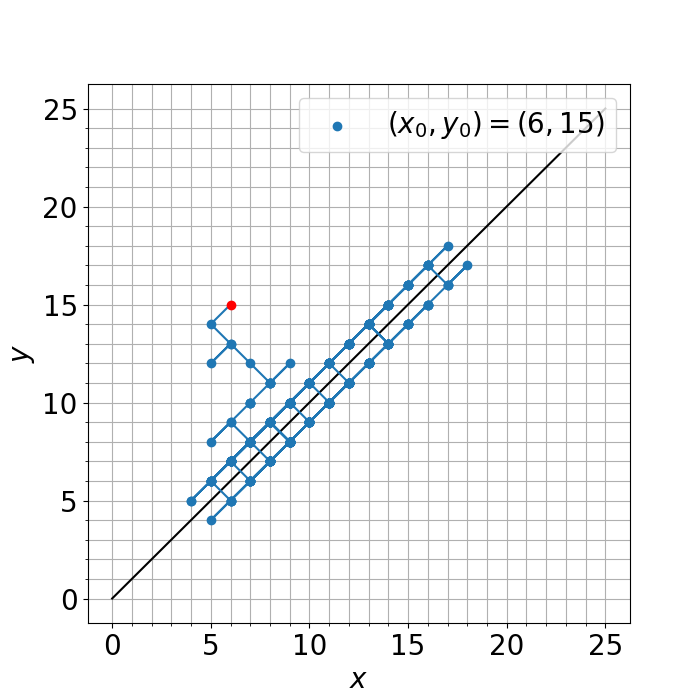}
        \caption{}
    \end{subfigure}
    \hfill
    \caption{Two-point motion $(\varphi_{q}^n(x_0), \varphi_{q}^n(y_0))_{n \in \mathbb{N}_0}$ of the birth-death process. The trajectory in (a) belongs to the realization shown in Figure~\ref{fig:Phi}(a), while (b) refers to Figure~\ref{fig:Phi}(b). The rate constants are chosen as $\rate_1=10$, $\rate_2=1$.
    }
    \label{fig:two-point motion}
\end{figure}




\paragraph{Transition probabilities}

Let $z$ be a variable which takes the  values $\nu_1=1$ or $\nu_2=-1$ of the corresponding state-change vectors. For the transition probabilities of the RDS of the embedded Markov chain we set
\begin{equation}
\label{P_delta}
   P_{z} (x) :=  \mathbb{P}\Big(\varphi_{q}^{n+1}(x_0)=x+z \;\Big|\;\varphi_{q}^n(x_0)=x\Big)
\end{equation}
for an arbitrary state $x\in \mathbb{N}_0$ and an initial state $x_0\in \mathbb{N}$. This probability is independent of $n$ because the process is time-homogeneous. 
Using again Eq.~\eqref{eq:kappa}, we obtain
\begin{align}
\label{eq:trans_prob}
    P_1(x) = \frac{\rate_1}{\rate_1+\rate_2x}, \quad P_{-1}(x) = \frac{\rate_2x}{\rate_1+\rate_2x}.
\end{align}

Just as for the one-point motion $(\varphi_q^n(x_0))_{n \in \mathbb{N}_0}$, we can also determine the transition probabilities for the Markovian dynamics of the two-point motion $(\varphi_{q}^n(x_0), \varphi_{q}^n(y_0))_{n \in \mathbb{N}_0}$. 
For $z_1,z_2\in \{1,-1\}$ set
\begin{align*}
   P_{(z_1,z_2)} (x,y) :=  \mathbb{P}\Big(\big(&\varphi_{q}^{n+1}(x_0),\varphi_{q}^{n+1}(y_0)\big) 
   =(x+z_1,y+z_2) \nonumber\\
   &\Big|\big(\varphi_{q}^n(x_0), \varphi_{q}^n(y_0)\big)=(x,y)\Big)
\end{align*}
for any $(x,y) \in \mathbb{N}_0^2$. 
Given \eqref{eq:trans_prob} we can deduce that the transition probabilities are 
\begin{flalign}
   & \hspace{2cm} P_{(1,1)} (x,y)  = \min\{P_1(x),P_1(y)\},  & \llap{(yellow)} \label{transition_prob_1} \\
   &  \hspace{2cm}   P_{(-1,1)} (x,y)  =\max\{0,P_1(y)-P_1(x)\}, &\llap{(red)} \\
   & \hspace{2cm}   P_{(1,-1)} (x,y)  =\max\{0,P_1(x)-P_1(y)\}, &\llap{(blue)} \\
    & \hspace{2cm} P_{(-1,-1)} (x,y)  = 1-\max\{P_1(x),P_1(y)\}.  & \llap{(green)} \label{transition_prob_4}
\end{flalign}
The colors refer to the transitions given by the arrows in Figure~\ref{fig:BDP_abstract}.
Since we have $P_{(-1,1)}(x,y)=0$ for $x<y$ and $P_{(1,-1)}(x,y)=0$ for $x>y$, it follows inmediately that the set $\mathbb{D}$ is forward-invariant for the two-point motion. 

\paragraph{First hitting time of $\mathbb{D}$}

We will show that the absorbing set $\mathbb{D}$ is reached by the two-point motion $(\varphi_{q}^n(x_0), \varphi_{q}^n(y_0))_{n \in \mathbb{N}_0}$ of the birth-death process almost surely in finite time regardless of the starting point. Formally speaking, by considering
\[ \tau_{\mathbb{D}}(x_0,y_0,q):=\inf\lbrace n\geq 0: (\varphi_{q}^n(x_0), \varphi_{q}^n(y_0)) \in\mathbb{D} \rbrace\]
as the first hitting time of the thick diagonal $\mathbb{D}$, we show that 
\begin{equation*}
\mathbb{P}\big(\tau_\mathbb{D} (x_0,y_0,q) <\infty \big) =1
\end{equation*}
holds for all $x_0,y_0\in \mathbb{N}_0$. 
To do so, we first define for a given  $d\in\mathbb{Z}$
the level set 
\begin{equation}\label{I_d}
	I_d := \left\{(x,y)\in \mathbb{N}_0^2:x-y=d \right\}
\end{equation}
and show in the following lemma that for $d\neq 0$ a process starting in $I_d$ will almost surely leave this set in finite time. 

\begin{figure}
    \centering
    \begin{subfigure}[b]{0.49\textwidth}
        \centering
        \includegraphics[width=\textwidth]{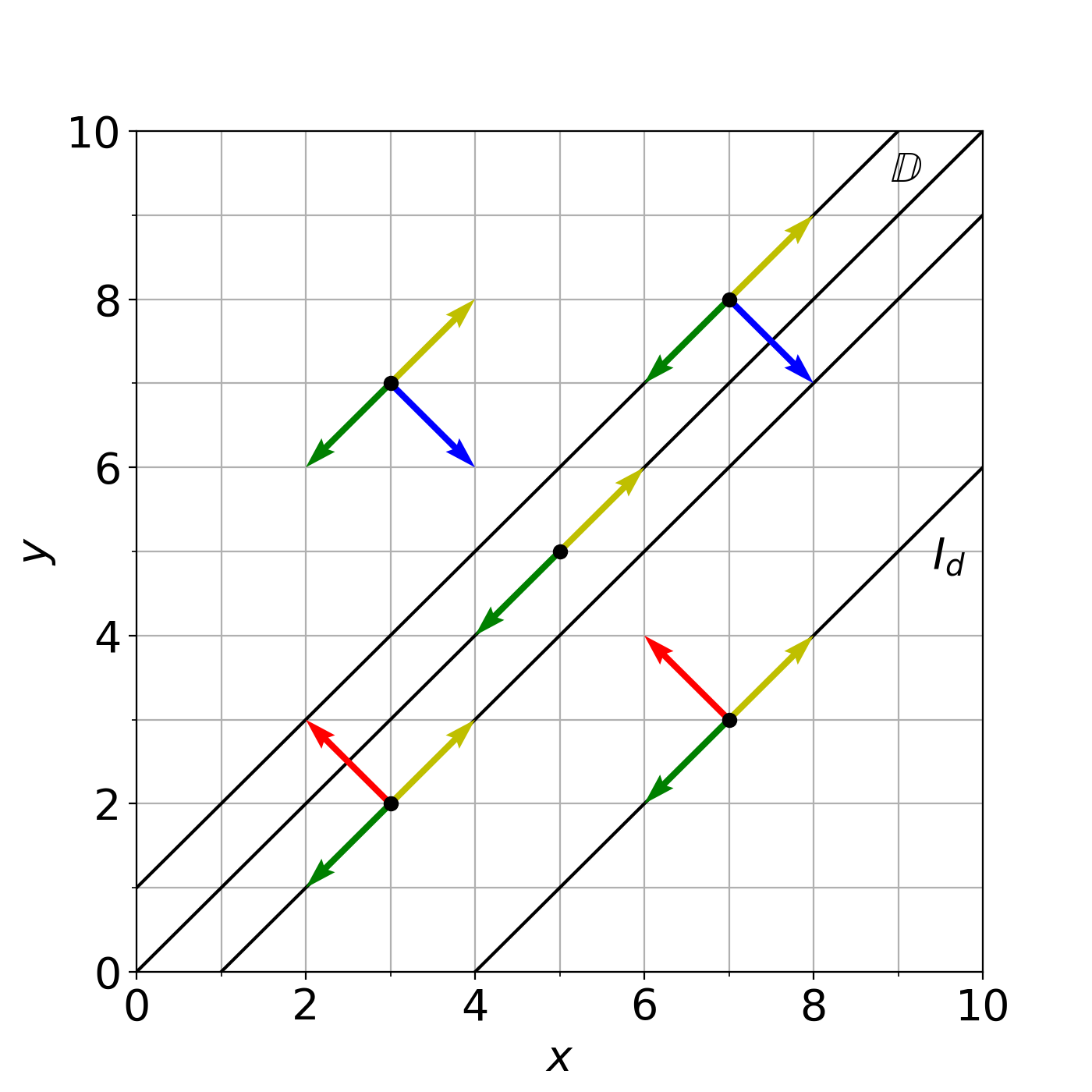}
    \end{subfigure}
    \hfill
    \caption{Schematic illustration for the transitions of the two-point motion for the birth-death process. The thick diagonal $\mathbb{D}$ is defined in \eqref{D}, the level sets $I_d$ are defined in \eqref{I_d}. The arrows indicate the directions in which the two-point motion can move, with the colors indicating the values of the corresponding probabilities as given in \eqref{transition_prob_1}-\eqref{transition_prob_4}.}
    \label{fig:BDP_abstract}
\end{figure}

\begin{lemma}\label{Lemma_I}
  Let $d\in\mathbb{Z}\setminus\lbrace 0\rbrace$ be given.	Then, for each initial state $(x_0,y_0)\in I_d$ the two-point motion $(\varphi_{q}^n(x_0), \varphi_{q}^n(y_0))_{n \in \mathbb{N}_0}$ of the birth-death chain exits $I_d$ $\mathbb{P}$-a.s., i.e.,
	\[ \mathbb{P}\big((\varphi_{q}^n(x_0), \varphi_{q}^n(y_0))\in I_d \; \forall n \geq 0 \big) =0 \quad \text{for all } (x_0,y_0) \in I_d.\]
\end{lemma}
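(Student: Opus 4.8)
The statement asserts that for $d \neq 0$, a two-point motion starting on the level set $I_d = \{(x,y) : x-y = d\}$ almost surely leaves $I_d$ in finite time. The plan is to analyze how the difference process $D_n := \varphi^n_q(x_0) - \varphi^n_q(y_0)$ evolves and show that it almost surely cannot stay equal to $d$ forever. The key observation from the transition probabilities \eqref{transition_prob_1}–\eqref{transition_prob_4} is that the process stays in $I_d$ exactly when both coordinates move together, i.e. under the transitions $(1,1)$ or $(-1,-1)$, which occur with combined probability $P_{(1,1)}(x,y) + P_{(-1,-1)}(x,y) = \min\{P_1(x),P_1(y)\} + 1 - \max\{P_1(x),P_1(y)\}$. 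To leave $I_d$, the process must take a $(1,-1)$ or $(-1,1)$ step, occurring with probability $|P_1(x) - P_1(y)|$.

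**The core estimate.** The crucial point is that on $I_d$ with $d \neq 0$ we always have $x \neq y$, and since $P_1(\cdot) = \frac{\rate_1}{\rate_1 + \rate_2 \cdot}$ is strictly monotone decreasing, the exit probability $|P_1(x) - P_1(y)| > 0$ is strictly positive at every point of $I_d$. First I would verify this strict positivity: fixing $d$, on the set $I_d$ the coordinates are $(y+d, y)$, and $|P_1(y+d) - P_1(y)|$ is a strictly positive function of $y \geq \max\{0,-d\}$. The obstacle is that this quantity is \emph{not bounded below by a uniform constant}: as $y \to \infty$, both $P_1(y+d)$ and $P_1(y)$ tend to $0$, so their difference also tends to $0$. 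Hence a naive geometric-trials argument with a uniform lower bound on the per-step exit probability fails, and this is the main difficulty of the lemma.

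**Handling the decaying exit probability.** To overcome the vanishing exit probability, I would exploit the fact that while the two coordinates remain in $I_d$ they execute identical increments, so the common value $\varphi^n_q(y_0)$ itself performs a birth-death walk on $\mathbb{N}_0$ with down-probability $P_{-1}(y) = \frac{\rate_2 y}{\rate_1 + \rate_2 y} \to 1$ as $y \to \infty$. Thus the process has a strong downward drift at large $y$ and cannot escape to infinity while staying in $I_d$: with probability one it returns infinitely often to any fixed bounded region of states (one can invoke positive recurrence of the one-point birth-death chain, which has a stationary distribution — a fact used later via Proposition~\ref{PROP:stationary distribution}). On this bounded region the exit probability \emph{is} bounded below by a positive constant $\delta = \delta(d)$, so by a Borel–Cantelli / infinitely-many-independent-trials argument the process almost surely eventually takes an exit step. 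I would formalize this by constructing a sequence of stopping times at successive visits to the bounded set and applying the conditional second Borel–Cantelli lemma (Lévy's extension) to the events of exiting at each such visit.

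**Alternative and conclusion.** A cleaner alternative I would consider is a direct contradiction argument: suppose $\mathbb{P}(\text{stay in } I_d \text{ forever}) > 0$. On this event the common coordinate is an honest birth-death chain confined to $\mathbb{N}_0$, which is positive recurrent, hence visits every state infinitely often almost surely; in particular it visits some fixed state $x^*$ with $P_1(x^*) \neq P_1(x^* - d)$ infinitely often, at which the conditional probability of \emph{not} exiting is a fixed $c < 1$ each time, forcing an exit almost surely and contradicting confinement. Either route yields $\mathbb{P}\big((\varphi^n_q(x_0),\varphi^n_q(y_0)) \in I_d \;\forall n \geq 0\big) = 0$, which is exactly the claim. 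I expect the recurrence input — ensuring the process cannot run off to infinity where exit becomes impossible — to be the genuine heart of the argument, with the monotonicity of $P_1$ supplying the pointwise strict positivity of the exit probability.
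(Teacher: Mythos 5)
Your proof is correct in substance but takes a genuinely different route from the paper. The paper never invokes recurrence of the one-point chain: it writes the staying probability $p_x = \mathbb{P}\big((\varphi^n_q(x+d),\varphi^n_q(x)) \in I_d\ \forall n\big)$ as a harmonic function of the killed two-point motion on $I_d$, derives the second-order recursion $p_{x+2} = (1+\alpha(x+d+1))\big(p_{x+1} - \tfrac{\alpha(x+1)}{1+\alpha(x+1)}p_x\big)$ together with the boundary relation $p_1 = (1+\alpha d)p_0$, and shows that $p_0>0$ would force the increments $u_x = p_{x+1}-p_x$ to grow like $\alpha^{x-1}x!\,u_0$, contradicting $p_x \in [0,1]$; so the only bounded solution is $p\equiv 0$. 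Your argument instead combines positive recurrence of the one-point motion with a conditional Borel--Cantelli estimate at successive return times to a fixed finite set, where the per-visit exit probability $|P_1(y)-P_1(y+d)|$ is uniformly positive. This is sound — and in fact simpler than you make it, since the lower coordinate $\varphi^n_q(y_0)$ is \emph{unconditionally} a copy of the recurrent one-point chain, so it visits any fixed state infinitely often a.s.\ regardless of whether the pair remains in $I_d$; your worry about escape to infinity within $I_d$ and the phrase ``on this event the common coordinate is an honest birth-death chain'' (conditioning on a future event changes the law to an $h$-transform) are both unnecessary and should be dropped in favour of the unconditional recurrence statement. The trade-off: your route is more conceptual and generalizes to any setting with a recurrent one-point motion and pointwise positive exit probability, but it imports the stationarity/recurrence input that the paper only establishes later (Proposition~\ref{PROP:stationary distribution}), whereas the paper's difference-equation argument is elementary and self-contained at this stage.
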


\begin{proof}
Without loss of generality we only consider the case in which $d\geq 1$. For $x \in \mathbb{N}_0$ consider the state $(x+d,x)\in I_d$ and define 
\[ p_x := \mathbb{P}\big((\varphi_{q}^n(x+d), \varphi_{q}^n(x))\in I_d \; \forall n \geq 0 \big)\]	
as the probability for the two-point motion to stay forever on $I_d$ given that it starts in $(x+d,x)\in I_d$. 
By means of the law of total probability we have
\begin{equation*}
	p_x = P_{(1,1)}(x+d,x)\cdot p_{x+1} + P_{(-1,-1)}(x+d,x) \cdot p_{x-1}
\end{equation*}
for $x\geq 1$, where $P_{(1,1)}(x+d,x)=\frac{1}{1+\alpha (x+d)}$ and $P_{(-1,-1)}(x+d,x)=\frac{\alpha x}{1+\alpha x}$ with $\alpha:=\frac{\rate_2}{\rate_1}$, see \eqref{transition_prob_1} and \eqref{transition_prob_4}. 
From this we can deduce the second-order difference equation
\begin{equation}\label{eq:recursion}
	p_{x+2} = (1+\alpha(x+d+1)) \left(p_{x+1} - \frac{\alpha (x+1)}{1+\alpha (x+1)} p_x \right)
\end{equation} 
for $x\geq 0$. Moreover, for $x=0$ we have $p_0=P_{(1,1)}(d,0)\cdot p_{1}$ such that 
\begin{equation}\label{eq:p0}
 p_1=(1+\alpha d)p_0.   
\end{equation}

Since $p_1$ is proportional to $p_0$, it follows inductively from Eq.~\eqref{eq:recursion} that $p_x$ is proportional to $p_0$ for all $x\in\mathbb{N}_0$. We prove by contradiction that the sequence $(p_x)_{x \in \mathbb{N}_0}$ of probabilities has to fulfill $p_x=0$ for all $x \in \mathbb{N}_0$.

Indeed, assume that $p_0>0$. 
From \eqref{eq:p0} we obtain
\begin{equation}
    \label{eq:initial_deviation}
    p_1-p_0=\left(1-\frac{1}{1+\alpha d}\right)p_1=\alpha d p_0>0.
\end{equation}
On the other hand, it follows immediately from \eqref{eq:recursion} that
    \[
        p_{x+2}-p_{x+1}=\alpha(x+d+1)p_{x+1}-\frac{\alpha(x+1)\left[ 1+\alpha(x+d+1)\right]}{1+\alpha (x+1)}p_x.
    \]
    Hence, by adding and subtracting $\frac{\alpha(x+1)\left[ 1+\alpha(x+d+1)\right]}{1+\alpha (x+1)}p_{x+1}$ we obtain
    \begin{align*}
        p_{x+2}-p_{x+1} & =\frac{\alpha d}{1+\alpha(x+1)}\cdot p_{x+1}+ \frac{\alpha(x+1)\left[ 1+\alpha(x+d+1)\right]}{1+\alpha (x+1)}\cdot (p_{x+1}-p_x)
        \\
        & \geq \alpha(x+1)\left(1+\frac{\alpha d}{1+\alpha(x+1)}\right)(p_{x+1}-p_x).
    \end{align*}
Let $u_{x}:=p_{x+1}-p_x$. Then, from the last inequality it follows that
\[
u_{x+1}>\alpha(x+1)u_x.
\]
By iterating the above inequality it follows that
\[
    u_x>\alpha^{x-1}\cdot x!\; u_0 \qquad x\in\mathbb{N}.
\]
Since $u_0>0$, see \eqref{eq:initial_deviation}, and since $\alpha^{x-1}\cdot x!\rightarrow \infty$ as $x\rightarrow\infty$ for any $\alpha>0$, this implies that $u_x\rightarrow\infty$. It then follows that $p_x\rightarrow\infty$, which is a contradiction since $p_x\in[0,1]$. In conclusion, we obtain $p_0=0$, and by proportionality, $p_x=0$ for all $x$. 
Noticing that all states in $I_d$ are of the form $(x+d,x)$ for some $x\in \mathbb{N}_0$ completes the proof. 
\end{proof}

By means of Lemma \ref{Lemma_I} we can now make the following central statement. 

\begin{prop}
\label{PROP:synchronisation}
	For each $(x,y)\in\mathbb{N}_0^2$, the two-point motion $(\varphi_{q}^n(x_0), \varphi_{q}^n(y_0))_{n \in \mathbb{N}_0}$ of the birth-death chain reaches the thick diagonal $\mathbb{D}$ almost surely in finite time.
\end{prop}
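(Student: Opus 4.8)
The plan is to leverage the monotone descent of the two-point motion through the level sets $I_d$ and to apply Lemma~\ref{Lemma_I} to guarantee finite-time exit from each such set. First I would record that $\mathbb{D}=I_{-1}\cup I_0\cup I_1$ and reduce to the case $d:=x-y\geq 2$: if $|x-y|\leq 1$ then $(x,y)\in\mathbb{D}$ already and $\tau_{\mathbb{D}}=0$, while the case $d\leq -2$ follows from the case $d\geq 2$ by the symmetry $x\leftrightarrow y$ of the transition probabilities \eqref{transition_prob_1}--\eqref{transition_prob_4}.

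The key structural observation is that the difference $D_n:=\varphi_q^n(x_0)-\varphi_q^n(y_0)$ changes by $-2$, $0$, or $+2$ at each step, since each coordinate moves by $\pm 1$. Crucially, while $D_n>0$ it cannot increase: for a state $(x,y)$ with $x>y$ one has $P_1(x)<P_1(y)$ because $P_1$ is strictly decreasing in \eqref{eq:trans_prob}, hence $P_{(1,-1)}(x,y)=\max\{0,P_1(x)-P_1(y)\}=0$ and the transition that would move the process from $I_d$ to $I_{d+2}$ has probability zero. Consequently, as long as it has not yet reached $\mathbb{D}$, the two-point motion started in $I_d$ with $d\geq 2$ either remains in $I_d$ or passes to $I_{d-2}$, and the only way to leave $I_d$ is to enter $I_{d-2}$.

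I would then realize $\tau_{\mathbb{D}}$ as a finite sum of level-set exit times. Set $\sigma_0=0$ and define recursively $\sigma_{k+1}$ as the first time after $\sigma_k$ at which the motion leaves its current level set $I_{d-2k}$; by the previous paragraph this exit deposits the process in $I_{d-2(k+1)}$. For $k=0,\dots,m-1$ with $m:=\lfloor d/2\rfloor$ the index $d-2k$ satisfies $d-2k\geq 2\neq 0$, so Lemma~\ref{Lemma_I} applies and each increment $\sigma_{k+1}-\sigma_k$ is almost surely finite. After these $m$ descents the process reaches $I_0$ (if $d$ is even) or $I_1$ (if $d$ is odd), both of which lie in $\mathbb{D}$. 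Since $\tau_{\mathbb{D}}\leq\sigma_m$ is a finite sum of almost surely finite random variables, it is almost surely finite.

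The step requiring the most care, and the main obstacle, is chaining the successive exit times rigorously: Lemma~\ref{Lemma_I} yields almost sure exit from $I_{d'}$ for every fixed deterministic starting point, but after the first descent the process enters $I_{d-2}$ at a \emph{random} state, so I must invoke the Markov property of the two-point motion (noted above its transition probabilities) together with the strong Markov property at $\sigma_k$ to transfer the conclusion to that random entry point. An alternative that sidesteps the iteration is a single contradiction argument: on the event $\{\tau_{\mathbb{D}}=\infty\}$ the integer sequence $D_n\geq 2$ is non-increasing, hence eventually equal to some constant $d^*\geq 2$, which forces the motion to remain in $I_{d^*}$ from some (random) time onward; conditioning on the state at that time and applying Lemma~\ref{Lemma_I} shows this has probability zero, and a countable union over the stabilization time and over $d^*$ gives $\mathbb{P}(\tau_{\mathbb{D}}=\infty)=0$.
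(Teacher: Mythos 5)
Your proof is correct and follows essentially the same route as the paper's: invoke Lemma~\ref{Lemma_I} for almost sure finite-time exit from each level set $I_d$, observe from the transition probabilities that the exit can only be toward the diagonal (to $I_{d-2}$ when $d\geq 1$, to $I_{d+2}$ when $d\leq -1$), and conclude after finitely many descents. The paper's version is terser and leaves implicit the chaining of successive exit times via the (strong) Markov property at random entry points, which you correctly identify and address as the step requiring care.
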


\begin{proof}
Let $(x_0,y_0)\in I_d$ for a given $d=x_0-y_0\neq 0$. According to Lemma~\ref{Lemma_I}, the two-point motion $(\varphi_{q}^n(x_0), \varphi_{q}^n(y_0))_{n \in \mathbb{N}_0}$ almost surely escapes from $I_d$ in finite time. Given the transition probabilities \eqref{transition_prob_1}-\eqref{transition_prob_4}, it can only end up in $I_{d-2}$ when $d\geq 1$ or in $I_{d+2}$ when $d\leq -1$. 
This happens a finite number of times until the process reaches $\mathbb{D}=I_{-1}\cup I_0 \cup I_1$. 
\end{proof}

\begin{cor}
\label{COR:partial synchronization}
The RDS $(\theta, \varphi)$ for the embedded Markov chain of the birth-death process partially synchronizes, and the partition is given by $\xi=\{W_0, W_1\}$ with $W_0=\{0,2,4,\ldots\}$ and $W_1=\{1,3,5, \ldots\}$.
\end{cor}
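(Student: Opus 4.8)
The plan is to combine the parity structure of the birth--death chain with the finite-time absorption into $\mathbb{D}$ established in Proposition~\ref{PROP:synchronisation}. The key observation is that the signed difference $d_n := \varphi^n_q(x_0) - \varphi^n_q(y_0)$ of the two-point motion preserves its parity under the dynamics. Indeed, at every step each component changes by exactly $\nu_1 = +1$ or $\nu_2 = -1$, so the difference changes by one of $\{-2,0,+2\}$; hence $d_{n+1} \equiv d_n \pmod 2$ for all $n$, and the parity of $d_n$ is constant along every trajectory and equal to the parity of $d_0 = x_0 - y_0$.

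Now fix $i \in \{0,1\}$ and take two distinct initial states $x_0, y_0 \in W_i$. Then $d_0 = x_0 - y_0$ is even, so by the parity invariance $d_n$ is even for all $n$. Recall that $\mathbb{D} = I_{-1} \cup I_0 \cup I_1$, where $I_{-1}$ and $I_1$ consist of points with difference $\pm 1$ (odd), while $I_0 = \Delta$ is the diagonal. Since $d_n$ never takes an odd value, the two-point motion cannot enter $I_{-1} \cup I_1$, so the only part of $\mathbb{D}$ it can reach is $I_0 = \Delta$. By Proposition~\ref{PROP:synchronisation} the two-point motion reaches $\mathbb{D}$ almost surely in finite time, and since it can only enter $\mathbb{D}$ through $\Delta$, there exists $\mathbb{P}$-a.s. a finite $n_0 = n_0(x_0,y_0,q)$ with $\varphi^{n_0}_q(x_0) = \varphi^{n_0}_q(y_0)$, which is precisely the synchronization condition \eqref{synch}.

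It remains to upgrade this pairwise statement to synchronization on all of $W_i$. For each pair $x_0, y_0 \in W_i$ the above yields a full-measure set on which \eqref{synch} holds, so by Remark~\ref{RMK:2pointSynch implies Whole} the RDS is synchronizing in $W_i$. Since $\xi = \{W_0, W_1\}$ is a partition of $\mathbb{N}_0$ into even and odd states, the RDS is partially synchronizing with this partition, as claimed.

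A remark on where the content lies: the argument is short once Proposition~\ref{PROP:synchronisation} is available, so the only genuinely load-bearing step is the parity invariance together with the identification $I_0 = \Delta$ --- this is what forces the finite-time absorption guaranteed by the proposition to occur on the diagonal rather than on the odd off-diagonals $I_{\pm 1}$. I would also note, to explain why the synchronization is only \emph{partial}, that for $x_0, y_0$ of different parity the difference $d_0$ is odd, the motion can only reach $I_{\pm 1}$ and never $\Delta$, so such pairs never coincide; this is consistent with the numerical picture in Figure~\ref{fig:Phi}.
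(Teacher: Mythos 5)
Your proof is correct and follows essentially the same route as the paper's: the paper also combines Proposition~\ref{PROP:synchronisation} with the observation that the two-point motion moves between level sets $I_d$ only in steps of $\pm 2$ (your parity invariance of $d_n$), so that $\mathbb{D}$ can only be entered through $\Delta=I_0$ when $x_0-y_0$ is even, and then invokes Remark~\ref{RMK:2pointSynch implies Whole} to pass from pairs to the whole class. Your closing remark on odd-parity pairs never reaching $\Delta$ matches the paper's discussion following the corollary.
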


\begin{proof}

Let $W_0,W_1$ be as in the statement above. As observed before in Proposition~\ref{PROP:synchronisation}, for any fixed $(x,y)\in I_d$ with $d\neq 0$ the two-point motion escapes in finite time to $I_{d-2}$ if $d\geq 1$, or to $I_{d+2}$ when $d\leq -1$. Thus, it reaches $\Delta=I_0$ in finite time $\mathbb{P}$-a.s. if and only if $d$ is even, that is if $x,y\in W_i$ for some $i\in\lbrace 0,1\rbrace$. The result follows from Proposition~\ref{PROP:synchronisation} and Remark~\ref{RMK:2pointSynch implies Whole}.
 \end{proof}

 Corollary~\ref{COR:partial synchronization} states that whenever $x,y\in W_i$ ($i=0,1$) we have for almost all $q \in \mathcal{Q}_+$ that $\#\varphi^n_{q}(\lbrace x,y\rbrace)=1$ for all $n$ sufficiently large, where $\#A$ denotes the cardinality of a set $A$. 
 More generally, for each finite (deterministic) set $K\subset W_i$ we obtain that for almost all $q \in \mathcal{Q}_+$ there is a $n_0(K, q)\in \mathbb{N}$ such that $\# \varphi^n_q(K)=1$ for all $n \geq n_0(K, q)$. This almost sure convergence implies the convergence in probability given by
    \begin{equation}
    \label{eq:detconvprob}
        \lim_{n\rightarrow \infty}\mathbb{P}(\# \varphi^n_q(K)\geq 2)=0.
    \end{equation}
This last statement can be extended to finite random sets as defined in the following Definition~\ref{DEF:randomset}. For this, let $d:\mathbb{X}\times \mathbb{X}\to [0,\infty)$ be the Euclidean distance on $\mathbb{X}$ and define
\begin{equation}\label{def:d(x,B)}
    d(x,B):= \inf_{y \in B}d(x,y)
\end{equation}
for non-empty sets $B\subset\mathbb{X}$.

\begin{Def}
\label{DEF:randomset}
Let $(\mathcal{Q},\sigma(\mathcal{Q}),\mathbb{P})$ be an arbitrary probability space and $\mathbb{X}=\mathbb{N}_0^L$. A mapping $K:\mathcal{Q}\rightarrow \mathcal{P}(\mathbb{X})$, denoted as $q \mapsto K_q$, is a \textit{random set} if the function $q \mapsto d(x, K_q)$ is measurable for each $x \in \mathbb{X}$.
\end{Def}

We say that a random set $K:\mathcal{Q}\rightarrow \mathcal{P}(\mathbb{X})$ is a \textit{finite random set} if $K_q$ is nonempty and finite for every $q\in\mathcal{Q}$. A finite random set in $\mathcal{P}(\mathbb{X})$ is contained in a deterministic finite set with high probability as indicated in the next proposition. 
\begin{prop}
    \label{PROP:randomfiniteInsideDeterministic}
        Let $K:\mathcal{Q}\rightarrow\mathcal{P}(\mathbb{X})$ be a finite random set. Then, for each $\varepsilon>0$ there is a finite set $F_\varepsilon\subset\mathcal{P}(\mathbb{X})$ such that
        \[
            \mathbb{P}\left( K_q\subset F_\varepsilon \right)\geq 1-\varepsilon.
        \]
\end{prop}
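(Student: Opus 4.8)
The plan is to exhaust $\mathbb{X}$ by an increasing sequence of deterministic finite sets and to show that the probability that $K_q$ is contained in the $N$-th one tends to $1$. Since $\mathbb{X}=\mathbb{N}_0^L$ is countable and locally finite for the Euclidean metric $d$, I would set $B_N:=\{x\in\mathbb{X}: d(x,0)\leq N\}$, where $0$ denotes the origin of $\mathbb{N}_0^L$. Each $B_N$ is a finite subset of $\mathbb{X}$, we have $B_N\subset B_{N+1}$, and $\bigcup_{N}B_N=\mathbb{X}$. The set claimed in the statement will eventually be taken to be $F_\varepsilon=B_N$ for $N$ chosen large enough.

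The heart of the argument is the measurability of the event $A_N:=\{q\in\mathcal{Q}: K_q\subset B_N\}$. Here I would exploit the discreteness of $\mathbb{X}$: any two distinct points of $\mathbb{N}_0^L$ are at Euclidean distance at least $1$, and each $K_q$ is finite, hence closed, so that $d(x,K_q)=0$ if and only if $x\in K_q$. Consequently $\{q: x\in K_q\}=\{q: d(x,K_q)=0\}$ is measurable for every $x\in\mathbb{X}$ as a preimage under the measurable map $q\mapsto d(x,K_q)$ supplied by Definition~\ref{DEF:randomset}. Rewriting containment as the absence of any exterior point,
\[
A_N=\bigcap_{x\in\mathbb{X}\setminus B_N}\{q: x\notin K_q\},
\]
exhibits $A_N$ as a countable intersection of measurable sets, so $A_N$ is measurable; moreover $A_N\subset A_{N+1}$.

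Finally, since every $K_q$ is finite and therefore bounded, each $q$ lies in $A_N$ for all sufficiently large $N$, that is $\bigcup_N A_N=\mathcal{Q}$. Continuity of $\mathbb{P}$ from below then yields $\mathbb{P}(A_N)\uparrow 1$, so for a given $\varepsilon>0$ one picks $N$ with $\mathbb{P}(A_N)\geq 1-\varepsilon$ and sets $F_\varepsilon=B_N$.

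I expect the main (and essentially only) subtlety to be the measurability step. The notion of a random set provides only the measurability of the distance functions $q\mapsto d(x,K_q)$, and the reduction of the geometric condition $K_q\subset B_N$ to a countable family of such conditions relies crucially on the discreteness of the lattice: points are isolated and finite sets are closed, which makes $d(x,K_q)=0$ equivalent to $x\in K_q$. Once this reduction is in place, the exhaustion and continuity-from-below steps are routine.
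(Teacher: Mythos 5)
Your argument is correct, but it takes a genuinely different route from the paper: the paper disposes of this proposition in one line by invoking a general result for random compact sets in Polish spaces, namely \cite[Proposition 3.15]{CrauelBook02}, whereas you give a self-contained elementary proof tailored to the discrete lattice. Your exhaustion $B_N\uparrow\mathbb{X}$, the identity $\{q: x\in K_q\}=\{q: d(x,K_q)=0\}$ (valid because $K_q$ is a nonempty set of lattice points, so the infimum defining $d(x,K_q)$ is either $0$ or at least $1$), the resulting measurability of $A_N=\{q:K_q\subset B_N\}$ as a countable intersection, and the continuity-from-below step are all sound; you have correctly identified measurability as the only delicate point and resolved it using exactly the structure that Definition~\ref{DEF:randomset} provides. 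What the citation buys the authors is brevity and the reassurance that the statement holds in much greater generality (random compact sets in a Polish space are contained in deterministic compact sets with high probability); what your proof buys is transparency, independence from an external reference, and an explicit display of where discreteness of $\mathbb{X}=\mathbb{N}_0^L$ enters. Either is acceptable; yours would serve as a legitimate replacement for the paper's proof.
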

\begin{proof}
The statement is a particular case of a more general setting, see \cite[Proposition 3.15]{CrauelBook02}. 
\end{proof}

We can now generalize property \eqref{eq:detconvprob} for the birth-death chain for arbitrary random finite sets in the next Proposition.


\begin{figure}
    \centering
    \begin{subfigure}[b]{0.69\textwidth}
        \centering
        \includegraphics[width=\textwidth]{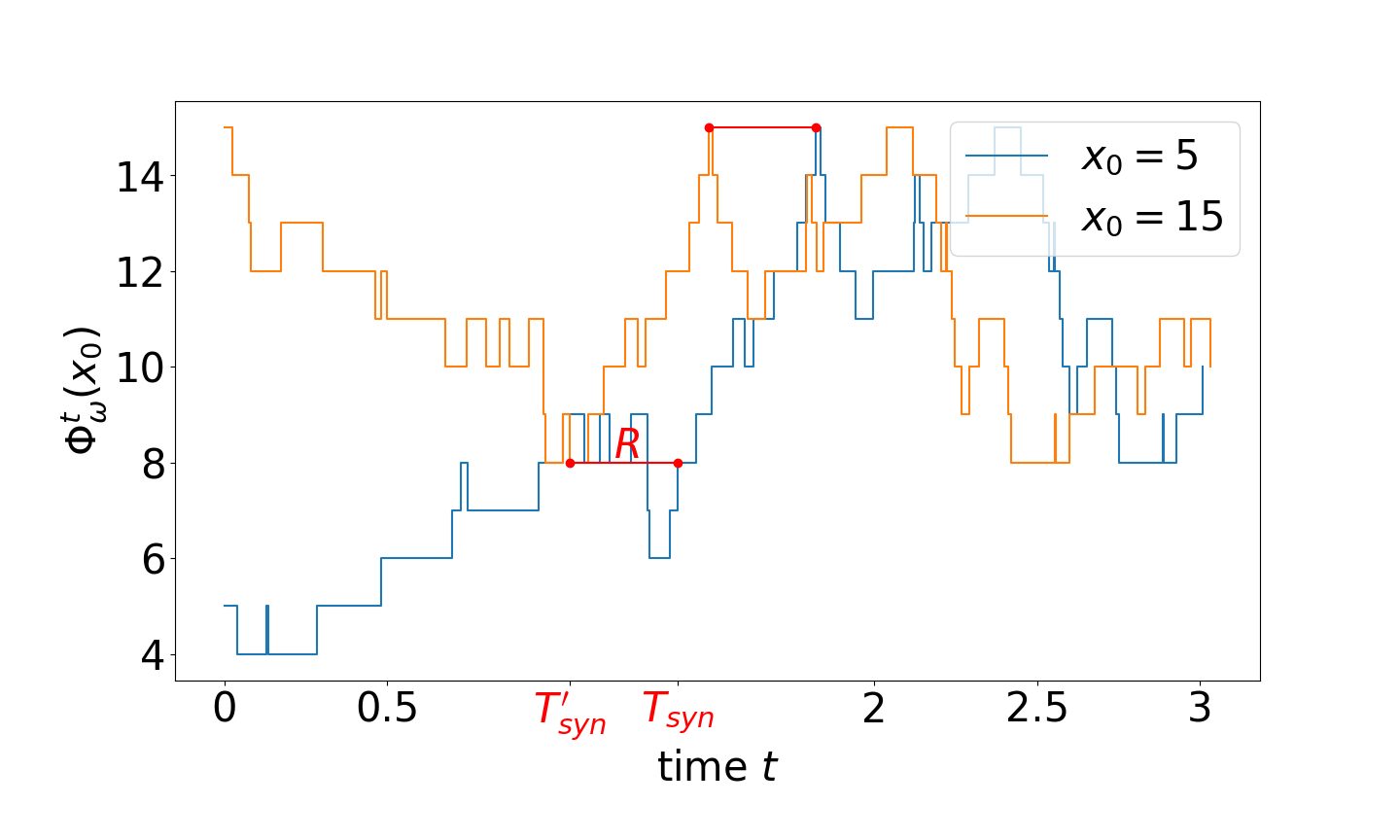}
    \end{subfigure}
    \hfill
    \caption{Time-shifted synchronization for the birth-death process. Extract of Figure~\ref{fig:Phi}(a), showing two realizations of $\Phi_\omega^t(x_0)$ for initial values with even distance. The time-shifted synchronization starts at time index $n_0=19$, where both realizations reach the state $x=8$. For the process starting in $x_0=5$ this happens at time $T_{\text{syn}}=(\psi_\omega^{n_0}(5,0))_2\approx 1.395$, while for the other realization the time point is given by $T'_{\text{syn}}=(\psi_\omega^{n_0}(15,0))_2\approx 1.064$. I.e., the time-shift is $R=T_{\text{syn}}-T'_{\text{syn}}\approx 0.331$ for these initial states.}
    \label{fig:timeshifted}
\end{figure}

\begin{prop}
\label{Prop:randomConvProb}
    Consider the setting of the birth-death chain. Let $K:\mathcal{Q}_+\rightarrow \mathcal{P}(\mathbb{N}_0)$ be a random finite set such that $K_q\subset W_i$ $\mathbb{P}$-a.s. for some $i\in\lbrace 0,1\rbrace$. Then 
    $\#\varphi^n_q(K_q)\rightarrow 1$ in probability.
\end{prop}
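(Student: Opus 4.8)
The plan is to combine the deterministic synchronization statement~\eqref{eq:detconvprob} with the approximation of a finite random set by a deterministic finite set provided by Proposition~\ref{PROP:randomfiniteInsideDeterministic}, tying them together through a union bound and the monotonicity of the cocycle on subsets. Since $K_q$ is nonempty for every $q$, we always have $\#\varphi^n_q(K_q)\geq 1$, and this cardinality is integer-valued; hence convergence in probability to $1$ is equivalent to establishing $\lim_{n\to\infty}\mathbb{P}(\#\varphi^n_q(K_q)\geq 2)=0$, which is what I would aim for.

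First I would fix $\varepsilon>0$ and apply Proposition~\ref{PROP:randomfiniteInsideDeterministic} to produce a finite deterministic set $F_\varepsilon\subset\mathbb{N}_0$ with $\mathbb{P}(K_q\subset F_\varepsilon)\geq 1-\varepsilon$. Because $K_q\subset W_i$ holds $\mathbb{P}$-a.s., I may replace $F_\varepsilon$ by $\tilde F_\varepsilon:=F_\varepsilon\cap W_i$ without lowering this probability, so that $\tilde F_\varepsilon$ is a finite deterministic subset of $W_i$ and still $\mathbb{P}(K_q\subset \tilde F_\varepsilon)\geq 1-\varepsilon$. The key structural observation is that $\varphi^n_q$ is a genuine map, so on the event $\{K_q\subset\tilde F_\varepsilon\}$ one has $\varphi^n_q(K_q)\subset\varphi^n_q(\tilde F_\varepsilon)$ pointwise in $q$, and consequently $\#\varphi^n_q(K_q)\leq\#\varphi^n_q(\tilde F_\varepsilon)$.

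Splitting according to whether $K_q\subset\tilde F_\varepsilon$ then gives the union bound
\begin{equation*}
\mathbb{P}(\#\varphi^n_q(K_q)\geq 2)\leq \mathbb{P}(K_q\not\subset\tilde F_\varepsilon)+\mathbb{P}(\#\varphi^n_q(\tilde F_\varepsilon)\geq 2)\leq \varepsilon+\mathbb{P}(\#\varphi^n_q(\tilde F_\varepsilon)\geq 2).
\end{equation*}
Since $\tilde F_\varepsilon$ is a finite deterministic subset of $W_i$, the deterministic synchronization statement~\eqref{eq:detconvprob} (itself a consequence of Corollary~\ref{COR:partial synchronization}) yields $\mathbb{P}(\#\varphi^n_q(\tilde F_\varepsilon)\geq 2)\to 0$ as $n\to\infty$. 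Passing to the limit superior I would obtain $\limsup_{n\to\infty}\mathbb{P}(\#\varphi^n_q(K_q)\geq 2)\leq\varepsilon$, and letting $\varepsilon\downarrow 0$ closes the argument.

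The main obstacle I anticipate is not the estimate but the measurability bookkeeping: one must ensure that $q\mapsto\#\varphi^n_q(K_q)$ is measurable, so that the probabilities above are well defined, and that the event $\{K_q\subset\tilde F_\varepsilon\}$ is measurable. Both should follow from the random-set framework — the image $\varphi^n_q(K_q)$ is again a finite random set, and the containment $\{K_q\subset\tilde F_\varepsilon\}$ can be expressed through the measurable distance functions $q\mapsto d(x,K_q)$ of Definition~\ref{DEF:randomset} — but this is the part I would state carefully rather than leave implicit.
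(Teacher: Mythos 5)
Your argument is essentially identical to the paper's proof: both reduce the claim to $\mathbb{P}(\#\varphi^n_q(K_q)\geq 2)\to 0$, approximate $K_q$ by a deterministic finite set via Proposition~\ref{PROP:randomfiniteInsideDeterministic}, use the inclusion $\{\#\varphi^n_q(K_q)\geq 2\}\cap\{K_q\subset F_\varepsilon\}\subset\{\#\varphi^n_q(F_\varepsilon)\geq 2\}$ together with~\eqref{eq:detconvprob}, and let $\varepsilon\downarrow 0$. Your extra step of replacing $F_\varepsilon$ by $F_\varepsilon\cap W_i$ is a small but welcome refinement, since~\eqref{eq:detconvprob} is stated for finite subsets of $W_i$ and the paper applies it to $F_\varepsilon$ without this adjustment.
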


\begin{proof}
Since $K_q$ is nonempty $\mathbb{P}$-a.s., the set $\varphi^n_q(K_q)$ has at least one element for all $n\in\mathbb{N}$ $\mathbb{P}$-a.s..
On the other hand, let $\varepsilon>0$ be arbitrarily small and $F_\varepsilon\subset \mathbb{N}_0$ a finite set as in Proposition~\ref{PROP:randomfiniteInsideDeterministic}. Then, 
\[
    \mathbb{P}(\#\varphi^n_q(K_q)\geq 2) = 
    \mathbb{P}\left(\#\varphi^n_q(K_q)\geq 2 \ \land K_q\subset F_\varepsilon\right) + 
    \mathbb{P}\left(\#\varphi^n_q(K_q)\geq 2 \ \land K_q\not\subset F_\varepsilon\right).
\]
Note that for any $n\in\mathbb{N}_0$,
\[
    \lbrace q\in\mathcal{Q}_+ : \# \varphi^n_q(K_q)\geq 2 \ \land \  K_q\subset F_\varepsilon\rbrace 
       \subset 
    \lbrace q\in\mathcal{Q}_+ : \# \varphi^n_q(F_\varepsilon)\geq 2\rbrace,
\]
and thus we deduce together with Proposition~\ref{PROP:randomfiniteInsideDeterministic} that 
\[
    \mathbb{P}(\# \varphi^n_q(K_q)\geq 2) \leq \mathbb{P}(\# \varphi^n_q(F_\varepsilon)\geq 2) +\varepsilon.
\]
Due to \eqref{eq:detconvprob} and since $\varepsilon$ was arbitrarily small we conclude that
\[
    \lim_{n\rightarrow\infty}\mathbb{P}(\#\varphi^n_q(K_q)\geq 2)=0, 
\]
and the result follows.
\end{proof}



As noted in the end of Sec.~\ref{sec:syn}, the partial synchronization of the RDS $(\theta, \varphi)$ for the embedded Markov chain directly implies the partial time-shifted synchronization of the corresponding augmented Markov process, which can easily be seen from Eq.~\eqref{T_n+1}. Thus, the observations from Figure~\ref{fig:Phi} can now be confirmed/clarified: In Figure~\ref{fig:Phi}(a), we have $x_0\in W_0$ for both initial states, such that time-shifted synchronization as defined in  \eqref{time-shifted_synch_1} is guarantied by Corollary~\ref{COR:partial synchronization}, see Figure~\ref{fig:timeshifted} for a detailed look at the dynamics.  In contrast, the initial states chosen in Figure~\ref{fig:Phi}(b) are not in the same set $W_i$, and consequently, the trajectories do not synchronize. However, they are likely to stay close to each other because the corresponding two-point motion reaches the thick diagonal where the distance between states is not larger than one.


\subsection{Synchronization for the Schlögl model}

In this section, we consider Example~\ref{ex:schloegl} as a variation of the birth-death process with additional bistable structure. 
We analyze the RDS $(\theta, \varphi)$ for the embedded Markov chain of the according reaction jump process. With the notation as in \eqref{P_delta}, the transition probabilities of the system are 
\begin{equation}
\label{eq:Schloegl_P}
    P_1(x) =  \frac{\rate_1+\rate_3x(x-1)}{\mu(x)}, \quad
    P_{-1}(x) =  \frac{\rate_2x+\rate_4 x(x-1)(x-2)}{\mu(x)}
\end{equation}
for an arbitrary state $x \geq 0$. 
In Figure~\ref{fig:Schloegl_RJP}, two continuous-time realizations of the dynamics are shown. As in the birth-death process, one can observe a time-shifted synchronization when starting with even distance, see Figure~\ref{fig:Schloegl_RJP}(a). For an odd distance in the starting points, see Figure~\ref{fig:Schloegl_RJP}(b), the separation of the trajectories is even more significant than in the birth-death scenario.  

\begin{figure}
    \centering
    \begin{subfigure}[b]{0.49\textwidth}
        \centering
        \includegraphics[width=\textwidth]{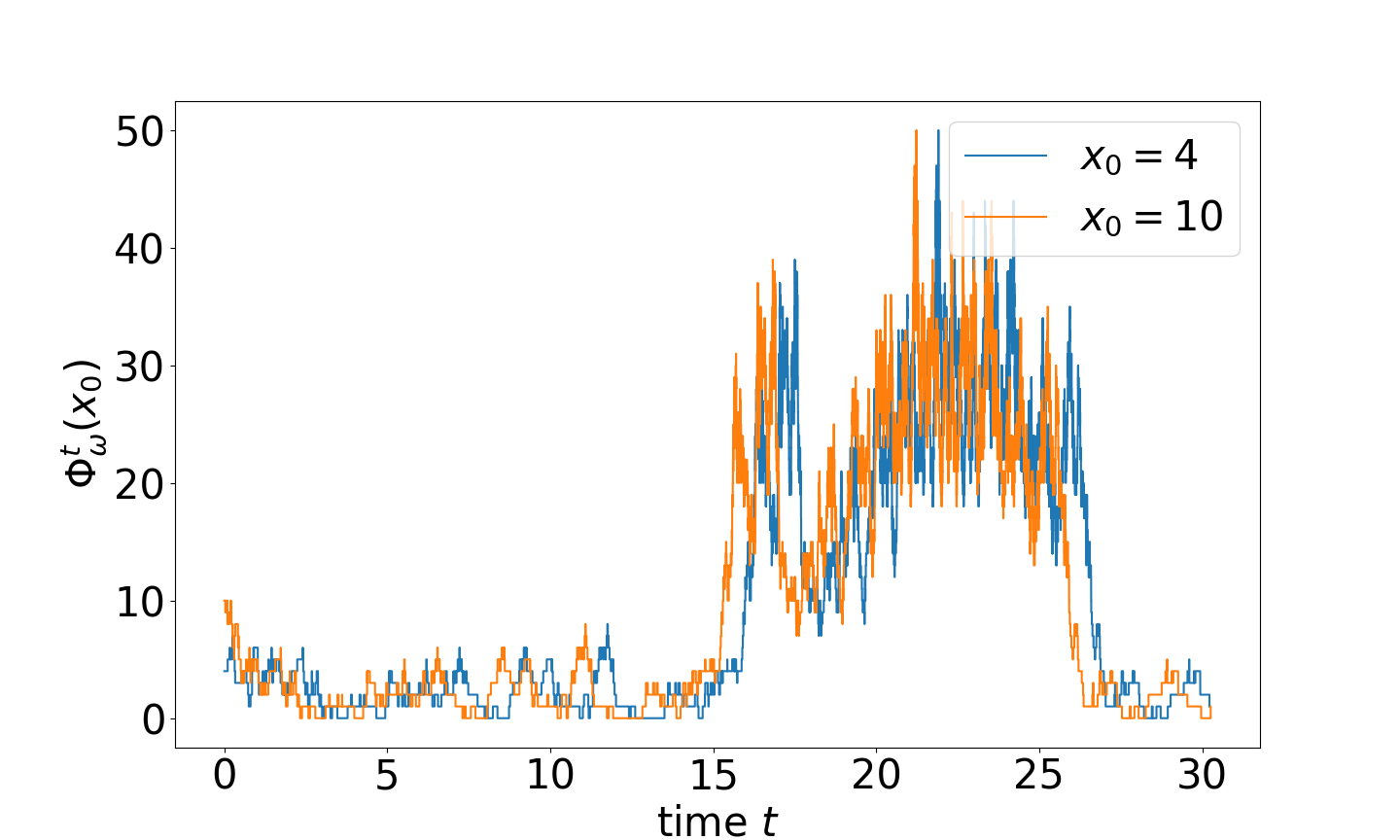}
        \caption{}
    \end{subfigure}
    \hfill
    \begin{subfigure}[b]{0.49\textwidth}
        \centering
        \includegraphics[width=\textwidth]{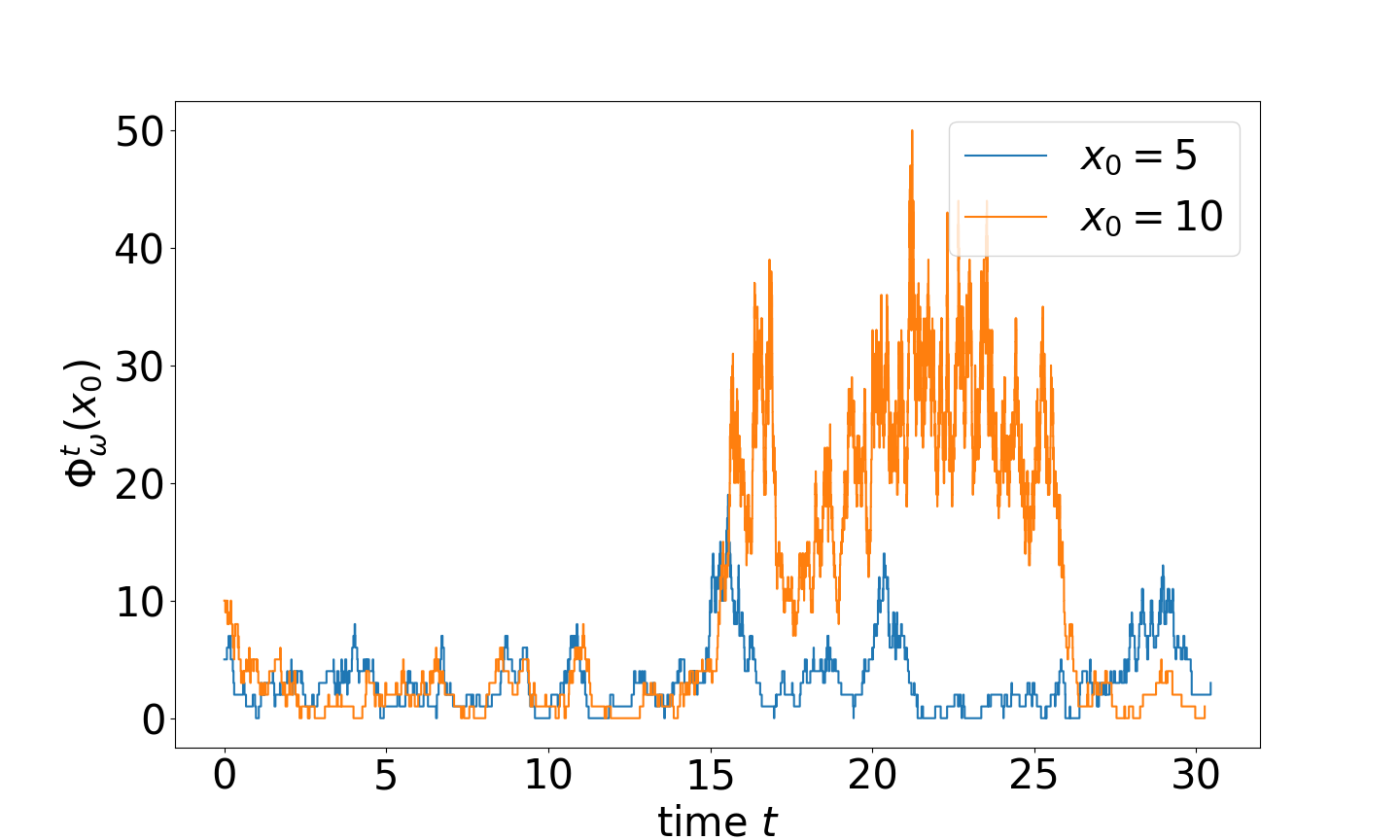}
        \caption{}
    \end{subfigure}
    \hfill
    \caption{Continuous-time realizations $\Phi^t_\omega(x_0)$ given in \eqref{eq:Phi} for the Schlögl model of Example~\ref{ex:schloegl}, driven by the same noise. Realizations for two initial values with (a) even distance and (b) odd distance. In (a), the orange trajectory becomes a time-delayed copy of the blue one, just as in Figure~\ref{fig:Phi}, while in (b), the trajectories clearly separate. The rate constants are chosen as $\rate_1=6$, $\rate_2=3.5, \rate_3=0.4, \rate_4=0.0105$.}
    \label{fig:Schloegl_RJP}
\end{figure}

The transition probabilities of the corresponding two-point motion are analogous to the ones for the birth-death process given in \eqref{transition_prob_1} - \eqref{transition_prob_4}, however with $P_1$ and $P_{-1}$ defined in \eqref{eq:Schloegl_P}. 
Figure~\ref{fig:Schloegl_2dim} shows two realizations of the two-point motion for different initial states. 
As opposed to the birth-death chain, the thick diagonal $\mathbb{D}$ is not absorbing in the Schl\"ogl model. This can be seen from the transition probabilities of the two-point motion which are non-zero also for directions pointing away from the thick diagonal (see Figure~\ref{fig:Schloegl_quiver}).

\begin{figure}
    \centering
    \begin{subfigure}[b]{0.49\textwidth}
        \centering
        \includegraphics[width=\textwidth]{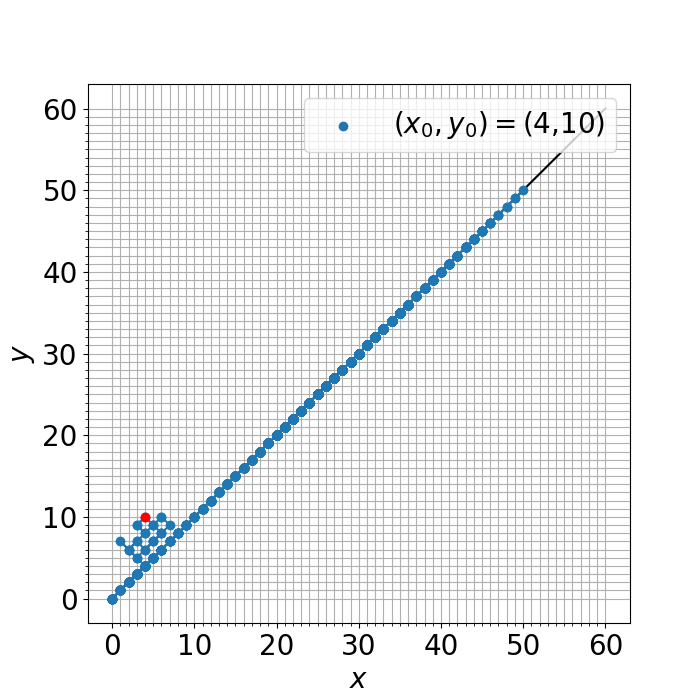}
        \caption{}
    \end{subfigure}
    \hfill
    \begin{subfigure}[b]{0.49\textwidth}
        \centering
        \includegraphics[width=\textwidth]{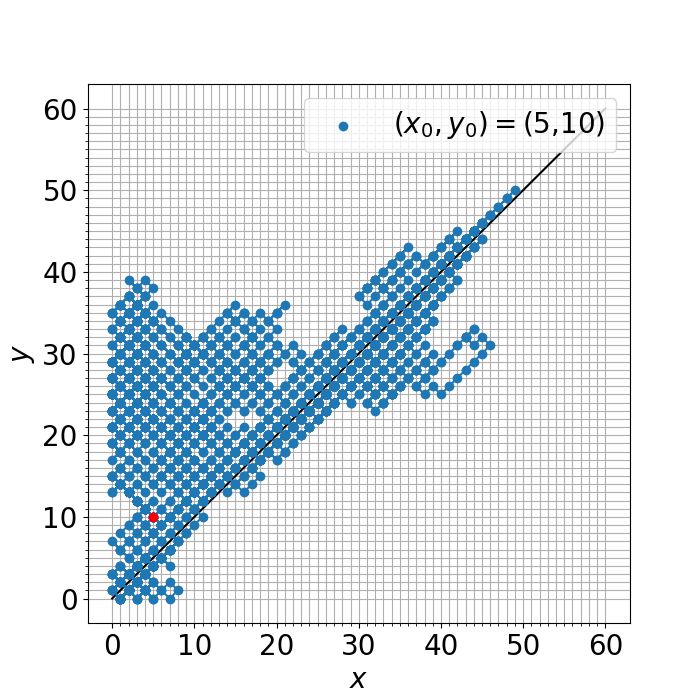}
        \caption{}
    \end{subfigure}
    \hfill
    \caption{Two-point motion $(\varphi_{q}^n(x_0), \varphi_{q}^n(y_0))_{n \in \mathbb{N}_0}$ of the Schlögl model. The trajectory in (a) belongs to the realization shown in Figure~\ref{fig:Schloegl_RJP}(a), while (b) refers to Figure~\ref{fig:Schloegl_RJP}(b). 
    }
    \label{fig:Schloegl_2dim}
\end{figure}

A proof of the conjectured partial synchronization is much more involved than in the simple birth-death case, due to the described lack of monotonicity in the two-point dynamics towards the diagonal. Hence, the proof strategy of Proposition~\ref{PROP:synchronisation} and the following corollaries does not apply here.  We leave a suitably generalized proof with different means as an open problem for the future. 
From such a partial synchronization of the embedded Markov chain, one may conclude directly the time-shifted synchronization of the continuous-time realization $(\Phi^t_\omega(x))_{t\geq 0}$ for initial states with even distance as observed in Figure~\ref{fig:Schloegl_RJP}(a). 


\begin{figure}
    \centering
    \begin{subfigure}[b]{0.49\textwidth}
        \centering
        \includegraphics[width=\textwidth]{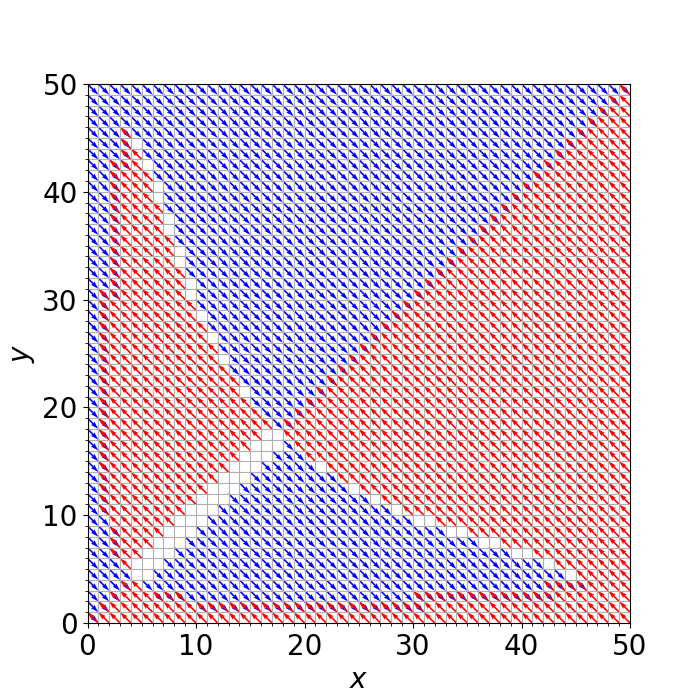}
    \end{subfigure}
    \hfill
    \caption{Possible transitions to the top left (red) and the bottom right (blue) for the Schl\"ogl model. Transitions parallel to the diagonal are not displayed. 
    }
    \label{fig:Schloegl_quiver}
\end{figure}

\section{Random attractors associated to (embedded) Markov chains}
\label{sec:attractors}
In this section we introduce different notions of random attractors for an RDS, providing insights into their relationships in the context of a discrete state  in Sec.~\ref{sec:attractors_gen}. As a matter of fact, we show in Sec.~\ref{sec:existence_weak_attr} that under very mild conditions an RDS induced by the random difference equation \eqref{orbit_x} admits a weak attractor. The partial synchronization for the embedded Markov chain of the birth-death process, which was proved in Section 3, will be useful to describe in full detail the structure of its attractor and the related sample measures in Sec.~\ref{sec:birth-death_attr}. We explore numerically such characteristics for the attractor of the embedded Markov chain of the Schlögl model in Sec.~\ref{sec:Schloegl}.


\subsection{General properties of random attractors in discrete time and discrete space}
\label{sec:attractors_gen}


In this and the following section (Sec.~\ref{sec:existence_weak_attr})
we consider a generalized setting as compared to Sec.~\ref{Sec:RDS}, since our insights on random attractors for the discrete case can be easily formulated in that generality.
More concretely, let $\mathcal{Q}$ be a probability space with measure $\mathbb P$ and let $\theta:  \mathcal{Q} \to \mathcal Q$ be an invertible and $\mathbb P$-ergodic map. Furthermore, for $\mathbb{X}=\mathbb{N}_0^L$ as before, let $\varphi: \mathbb N_0 \times \mathcal Q \times \mathbb X \to \mathbb X$ be a measurable cocycle (cf.~property~\eqref{eq:cocycle}) such that $(\theta, \varphi)$ is a random dynamical system. One of the characteristics of random attractors is that of \textit{invariance} as described in the following.

\begin{Def}
\label{DEF:invariance}
    Given a RDS $(\theta,\varphi)$, we say that a random set $A:\mathcal{Q}\rightarrow\mathcal{P}(\mathbb{X})$ (recall Definition~\ref{DEF:randomset}) is invariant (or $\varphi$-invariant) if
    \begin{equation}
    \label{eq:invariance}
        \varphi_{q}^n(A_q) = A_{\theta^n q}\qquad \textup{ for all } n\in\mathbb{N}, \ \mathbb{P}\textup{-a.s.}
    \end{equation}
\end{Def}


Notice that without loss of generality, we can assume that \eqref{eq:invariance} is satisfied everywhere by restricting ourselves to the full probability set where this is true.
\begin{rem}
\label{REMARK:1_impliesAllN}
    Since we deal with a discrete-time setting, condition \eqref{eq:invariance} is fulfilled as soon as it is satisfied for $n=1$, that is $\varphi_q^1(A_q)=A_{\theta q}$ $\mathbb{P}$-a.s. Indeed, let $\mathcal{Q}_1\subset \mathcal{Q}$ be a measurable set of full probability such that \eqref{eq:invariance} holds for $n=1$. By the invariance of $\mathbb{P}$ under $\theta$, we have that $\mathbb{P}(  \mathcal{Q}_1)=\mathbb{P}(\theta^{-1}\mathcal{Q}_1)=1$. Consider then the full probability set $\mathcal{Q}_2=\theta^{-1}\mathcal{Q}_1\cap \mathcal{Q}_1$, which is the set where \eqref{eq:invariance} holds for $n=1,2$, and inductively for $k\in\mathbb{N}$ the set $\mathcal{Q}_{k+1}=\theta^{-1}\mathcal{Q}_k\cap \mathcal{Q}_k$ for which \eqref{eq:invariance} is satisfied for all $n\leq k+1$. Hence, condition \eqref{eq:invariance} holds on $\tilde{\mathcal{Q}}:=\bigcap_{n=1}^{\infty}\mathcal{Q}_n$, and $\mathbb{P}(\tilde{\mathcal{Q}})=1$.
\end{rem}


We give the notion of attraction in terms of the Hausdorff semi-distance for non-empty sets
\begin{equation*}
    \dist(A,B)= \sup_{x\in A} d(x,B), \quad A,B\subset \mathbb{X},
\end{equation*}
where $d(x,B)$ is given in \eqref{def:d(x,B)}. 
In general, an invariant compact random set $A$ (i.e., $A_q$ is compact for all $q\in\mathcal{Q}$) is called 
\begin{itemize}
    \item[(i)] a \textit{(strong) forward attractor}, if for each compact set $B\subset \mathbb{X}$
    \begin{equation}\label{def:forward}
       \lim_{n \to \infty}  \dist(\varphi_{q}^n(B), A_{\theta^n q})=0 \quad \mathbb{P}\text{-a.s.}, 
    \end{equation}
    \item[(ii)]  a \textit{(strong) pullback attractor} if for each compact set $B\subset \mathbb{X}$
    \begin{equation*}
        \lim_{n \to \infty}  \dist(\varphi_{\theta^{-n}q}^n(B), A_ q)=0 \quad \mathbb{P}\text{-a.s.},
    \end{equation*}
    \item[(iii)]  a \textit{weak attractor} if for each compact set $B\subset \mathbb{X}$
    \begin{equation}\label{def:weak}
       \lim_{n \to \infty}  \dist(\varphi_{q}^n(B), A_{\theta^n q})=0 \ \text{ in probability,} 
    \end{equation}
\end{itemize}
cf.~e.g.~\cite{crauel2015}. 
By the discrete nature of the state space, compact sets in $\mathbb{X}$ are simply finite sets. Moreover, for any $q\in \mathcal{Q}$ and $B \subset \mathbb X$,
 \begin{equation}
    \label{eq:limitIFFlanding}
       \lim_{n\to \infty} \dist\left( \varphi^n_{\theta^{-n}q}(B),A_q \right)= 0  \ \text{ iff } \ \exists N\equiv N(q,B) \text{ s.t. } \forall n\geq N, \ \varphi^{n}_{\theta^{-n}q}(B)\subset A_q.
    \end{equation}
In a similar fashion,  $\lim_{n\rightarrow\infty}\dist(\varphi^n_q(B),A_{\theta^n q})=0$ if and only if there exists $N\equiv N(q,B)$ such that $\varphi^N_{q}(B)\subset A_{\theta^N q}$. Here, the invariance of $A$ guarantees that $\varphi^n_{q}(B)\subset A_{\theta^n q}$ for all $n\geq N$.

It is known that weak attractors are unique, in the sense that two weak attractors $A$ and $\tilde{A}$ coincide $\mathbb{P}$-a.s. \cite[Lemma 1.3]{flandoli2016}. Observe also that in the weak sense ``forward" and ``pullback" attraction are the same since $\theta$ is measure-preserving. In other words, $A$ is a weak attractor if and only if for each compact set $B\subset \mathbb{X}$
$$ \lim_{n \to \infty}  \dist(\varphi_{\theta^{-n}q}^n(B), A_q)=0 \ \text{ in probability.}$$

Since the state space is discrete it is straightforward to see that if the strong pullback convergence holds for all point sets, i.e. for $B=\lbrace x\rbrace$ where $x\in\mathbb{X}$, then it also holds for any finite $B$. Using the terminology from \cite{crauel2015,CrauelFlandoli1994}, this would imply that strong \textit{point attractors} are equivalent to strong (set) attractors as defined above. 
Indeed, assuming that \eqref{eq:limitIFFlanding} holds for all point sets $B=\{x\}$, where $x\in K$ for a given finite set $K$, it also $\mathbb{P}$-a.s.~holds for $B=K$ by taking $N(q,K)=\max\lbrace N(q,x) : x\in K \rbrace$. 
In a similar spirit, we give equivalent conditions for an invariant random finite set to be a weak attractor in the next theorem.  

\begin{Thm}
    \label{THM:weakEquiv}
     Let $A:\mathcal{Q}\rightarrow\mathcal{P}(\mathbb{X})$ be an invariant compact finite set. Then the following are equivalent.
     \begin{itemize}
         \item[I.] $A$ is a weak attractor.
         \item[II.] $A$ is a forward attractor. 
         \item[III.] $A$ is a forward point attractor, that is \eqref{def:forward} is satisfied for all $B=\lbrace x\rbrace$ with $x\in \mathbb{X}$.

         \item[IV.] $A$ is a \emph{weak point attractor}, that is \eqref{def:weak} is satisfied for all $B=\lbrace x\rbrace$ with $x\in \mathbb{X}$.
         \item[V.] $A$ weakly attracts random finite sets, that is for any finite random set $K:\mathcal{Q}\rightarrow\mathcal{P}(\mathbb{X})$ we have that
         \[
            \lim_{n\rightarrow\infty}\dist(\varphi^n_q(K_q),A_{\theta^n q})=0 \ \text{ in probability.}
         \]
             \end{itemize}
\end{Thm}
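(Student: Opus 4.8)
The plan is to prove the cycle $\mathrm{I}\Rightarrow\mathrm{IV}\Rightarrow\mathrm{III}\Rightarrow\mathrm{II}\Rightarrow\mathrm{I}$ and to fold in statement~V through the trivial inclusion $\mathrm{V}\Rightarrow\mathrm{IV}$ together with a final implication $\mathrm{II}\Rightarrow\mathrm{V}$. Three of these links are immediate. Since a singleton $\{x\}$ is a (constant, hence finite and deterministic) set, weak attraction of all compact sets restricts at once to weak point attraction, giving $\mathrm{I}\Rightarrow\mathrm{IV}$; likewise a deterministic singleton is a finite random set in the sense of Definition~\ref{DEF:randomset}, so $\mathrm{V}\Rightarrow\mathrm{IV}$. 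Finally, almost sure convergence implies convergence in probability, whence $\mathrm{II}\Rightarrow\mathrm{I}$.

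The implication $\mathrm{III}\Rightarrow\mathrm{II}$ is the forward analogue of the point-to-set argument recorded just before the theorem: by the characterization following \eqref{eq:limitIFFlanding}, forward point attraction yields for each $x$ a finite $N(q,x)$ after which $\varphi^n_q(x)\in A_{\theta^n q}$; for a finite set $B=\{x_1,\dots,x_m\}$ one sets $N(q,B)=\max_i N(q,x_i)$, finite $\mathbb{P}$-a.s., so $\varphi^n_q(B)\subset A_{\theta^n q}$ for all $n\geq N(q,B)$, and $\mathrm{II}$ follows because compact subsets of $\mathbb{X}$ are exactly the finite ones.

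The crucial step is $\mathrm{IV}\Rightarrow\mathrm{III}$, upgrading convergence in probability to almost sure convergence, for which I would exploit the absorbing nature of $A$. Assuming, as we may, that \eqref{eq:invariance} holds everywhere, once $\varphi^n_q(x)\in A_{\theta^n q}$ we get $\varphi^{n+1}_q(x)=\varphi^1_{\theta^n q}(\varphi^n_q(x))\in\varphi^1_{\theta^n q}(A_{\theta^n q})=A_{\theta^{n+1}q}$, so the landing events $L_n:=\{q:\varphi^n_q(x)\in A_{\theta^n q}\}$ are nested, $L_n\subset L_{n+1}$. Because $\mathbb{X}$ is discrete and each $A_{\theta^n q}$ is finite and nonempty, $\dist(\varphi^n_q(x),A_{\theta^n q})$ equals $0$ on $L_n$ and is at least $1$ off $L_n$; hence weak point attraction is equivalent to $\mathbb{P}(L_n)\to 1$. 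Continuity of $\mathbb{P}$ from below then gives $\mathbb{P}(\bigcup_n L_n)=\lim_n\mathbb{P}(L_n)=1$, and on the full-measure set $\bigcup_n L_n$ the trajectory lands in finite time and stays there, so $\dist(\varphi^n_q(x),A_{\theta^n q})\to 0$ $\mathbb{P}$-a.s., which is $\mathrm{III}$.

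It remains to establish $\mathrm{II}\Rightarrow\mathrm{V}$, following the scheme of Proposition~\ref{Prop:randomConvProb}. Given a finite random set $K$ and $\varepsilon>0$, choose by Proposition~\ref{PROP:randomfiniteInsideDeterministic} a deterministic finite set $F_\varepsilon$ with $\mathbb{P}(K_q\subset F_\varepsilon)\geq 1-\varepsilon$. On $\{K_q\subset F_\varepsilon\}$ the monotonicity of $\dist(\cdot,A_{\theta^n q})$ gives $\dist(\varphi^n_q(K_q),A_{\theta^n q})\leq\dist(\varphi^n_q(F_\varepsilon),A_{\theta^n q})$, so that for every $\delta>0$
\[
   \mathbb{P}\big(\dist(\varphi^n_q(K_q),A_{\theta^n q})>\delta\big)\leq\mathbb{P}\big(\dist(\varphi^n_q(F_\varepsilon),A_{\theta^n q})>\delta\big)+\varepsilon .
\]
As $F_\varepsilon$ is a fixed finite set, the forward (hence weak) attraction from $\mathrm{II}$ sends the first term to $0$ as $n\to\infty$; letting $\varepsilon\downarrow 0$ yields $\dist(\varphi^n_q(K_q),A_{\theta^n q})\to 0$ in probability, i.e.\ $\mathrm{V}$. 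The main obstacle is precisely the probability-to-almost-sure passage in $\mathrm{IV}\Rightarrow\mathrm{III}$; everything there hinges on the observation that invariance renders the landing events monotone, so that discreteness alone converts convergence in probability into almost sure convergence with no further quantitative input.
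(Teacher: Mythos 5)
Your proof is correct and rests on the same two ingredients as the paper's own argument: the invariance of $A$ makes the landing events nested, so that continuity of measure upgrades convergence in probability to almost sure convergence (your IV$\Rightarrow$III is exactly the paper's I$\Rightarrow$II step), and Proposition~\ref{PROP:randomfiniteInsideDeterministic} handles statement V via the deterministic cover $F_\varepsilon$. The cycle is merely arranged differently (the paper proves I$\Rightarrow$II$\Rightarrow$III$\Rightarrow$IV$\Rightarrow$I and I$\Leftrightarrow$V), which is a cosmetic rearrangement rather than a different method.
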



\begin{proof}
\noindent \textbf{I implies II.}  Assume that $A$ is a weak attractor. Take $x\in \mathbb{X}$ as an arbitrary initial condition and consider the sets $\mathcal{Q}_n^x:=\lbrace q\in\mathcal{Q} :  \varphi^n_{q}(x)\in A_{\theta^{n}q}  \rbrace$.  Since the state space is discrete, we have 
\[ \mathbb{P}(\mathcal{Q}_n^x) = 1-\mathbb{P}\left( d\left( \varphi^n_{q}(x),A_{\theta^{n}q} \right)\geq r \right) \]
for any $r\in(0,1]$. 
From the convergence in probability to the attractor (cf.~\eqref{def:weak}) it follows that for an arbitrary $\varepsilon>0$ there exists $N\equiv N(x,\varepsilon)$ such that $\mathbb{P}(\mathcal{Q}_n^x)\geq 1-\varepsilon$ for all $n\geq N$. Furthermore, from the $\varphi$-invariance of $A$ we know that $\mathcal{Q}_n^x\subset \mathcal{Q}_{n+1}^x$ for all $n\in\mathbb{N}$. 

Let $\mathcal{Q}^x:=\bigcup_{n\in\mathbb{N}}\mathcal{Q}_n^x=\{q\in \mathcal{Q}: \lim_{n\to \infty} d\left( \varphi^n_{q}(x),A_{\theta^{n}q}\right)=0\}$. Notice that for any $n\geq N$
\[
    \mathbb{P}(\mathcal{Q}^x)\geq \mathbb{P}(\mathcal{Q}_n^x)\geq 1-\varepsilon,
\]
and since $\varepsilon$ was arbitrarily small we have that $\mathbb{P}(\mathcal{Q}^x)=1$.
Now, we set
\[\mathcal{Q}_B:=\bigcap_{x\in B} \mathcal{Q}^x = \left\lbrace q\in \mathcal{Q}: \lim_{n\to \infty}d\left( \varphi^n_{q}(x),A_{\theta^{n}q}\right)= 0 \; \forall x\in B\right\rbrace.\]
As $B$ is finite, we have 
\[ \mathcal{Q}_B = \left\{q\in \mathcal{Q}:\lim_{n\to \infty}\dist\left( \varphi^n_{q}(B),A_{\theta^{n}q}\right)=0 \right\},\]
and, using $\mathbb{P}(\mathcal{Q}^x)=1$ for all $x$, we get $\mathbb{P}(\mathcal{Q}_B)=1$ meaning that $A$ is a forward attractor (cf.~\eqref{def:forward}).

\noindent \textbf{II implies III.} This follows directly by taking $B=\lbrace x \rbrace$.

\noindent \textbf{III implies IV.} This statement is true from the fact that convergence a.s. implies convergence in probability.

    \noindent \textbf{IV implies I.} Assume that \eqref{def:weak} holds for $B=\{x\}$ for any $x\in \mathbb{X}$, which is equivalent to
    \[
        \lim_{n\rightarrow\infty}\mathbb{P}\left( \varphi^n_q(x)\not\in A_{\theta^n q} \right)=0.
    \]
Let $K=\lbrace x_1,\ldots, x_m\rbrace\subset \mathbb{X}$ for some $m\in \mathbb{N}$. Since for any $n\in\mathbb{N}$
    \[
    \left\lbrace q :  \dist(\varphi^n_q(K),A_{\theta^{n}q})\geq 1 \right\rbrace =
\bigcup_{i=1}^m\lbrace q : \varphi^n_{q}(x_i)\not\in A_{\theta^n q} \rbrace,
    \]
it follows that
\[
    \lim_{n\rightarrow\infty}\mathbb{P}\left(  \dist(\varphi^n_q(K),A_{\theta^{n}q})\geq 1 \right) \leq
    \sum_{i=1}^m{\lim_{n\rightarrow\infty}\mathbb{P}\left( \varphi^n_q(x_i)\not\in A_{\theta^nq} \right)}=0,
\]
and the claim follows.

\noindent \textbf{I if and only if V.} Clearly, V implies I, since for each finite set $B\subset \mathbb{X}$ one can take $K_q=B$ for all $q$.
Conversely, if $K$ is a finite random set and $\varepsilon>0$ an arbitrarily small constant, consider $F_\varepsilon>0$ as given in Proposition~\ref{PROP:randomfiniteInsideDeterministic} so that 
\begin{equation}
    \label{eq:randomcompactAlmostdeterministic}
   \mathbb{P}\left( K_q\subset F_\varepsilon \right)\geq 1-\varepsilon.
\end{equation}
Note that for any $n,m\in\mathbb{N}$ we have that
\begin{align*}
   \lefteqn{ \mathbb{P}\left( \varphi^n_q(K_q)\not\subset A_{\theta^n}q \right) } \\
   & =
\mathbb{P}\left( \varphi^n_q(K_q)\not\subset A_{\theta^n}q \land K_q\subset F_\varepsilon \right) + \mathbb{P}\left( \varphi^n_q(K_q)\not\subset A_{\theta^n}q \land K_q\not\subset F_\varepsilon \right).
\end{align*}
Hence, it follows from \eqref{eq:randomcompactAlmostdeterministic} and the observation
\[
    \lbrace \varphi^n_q(K_q)\not\subset A_{\theta^n}q \ \land \ K_q\subset F_\varepsilon\rbrace
    \subset
    \lbrace\varphi^n_q(F_\varepsilon)\not\subset A_{\theta^n}q\rbrace, 
\]
that for each $\varepsilon>0$ there is $m\in\mathbb{N}$ such that
\[
\mathbb{P}\left( \varphi^n_q(K_q)\not\subset A_{\theta^n}q \right)\leq \mathbb{P}\left( \varphi^n_q(F_\varepsilon)\not\subset A_{\theta^n}q \right) +\varepsilon.
\]
Since $F_\varepsilon$ is deterministic and finite, statement I implies $\mathbb{P}\left( \varphi^n_q(F_\varepsilon)\not\subset A_{\theta^n}q \right) \to 0$ for $n\to \infty$. As $\varepsilon$ was arbitrarily small, this implies $\mathbb{P}\left( \varphi^n_q(K_q)\not\subset A_{\theta^n}q \right)\rightarrow 0$  for $n\to \infty$.
\end{proof}
\begin{rem}
Note that, by virtue of this theorem, weak set and point attractors are the same for this discrete-time and discrete-space setting. Hence, the weak set or point attractor being a random point almost surely is equivalent here; in particular, that implies that the distinction between synchronization and weak synchronization defined via an attractor being a singleton, as done in \cite{flandoli2016}, is not necessary here. In addition, note that, again by Theorem~\ref{THM:weakEquiv}, our definition of synchronization (Sec.~\ref{sec:syn}) conincides with the one in \cite{flandoli2016}, if an attractor exists (under extension of $\mathcal Q_+$ to $\mathcal Q$).
\end{rem}

Due to their invariance, it becomes relevant to understand the dynamics within the attractors. In particular, the attractor for the birth-death chain admits a periodic behaviour, as we see later in Theorem~\ref{THM:strongAttractor}. 
\begin{Def}
    \label{DEF:randomCycle}
    A random set $A:\mathcal{Q}\rightarrow\mathcal{P}(\mathbb{X})$ is a \emph{random periodic orbit} of period $M$ for the RDS $(\theta,\varphi)$ if for $\mathbb{P}$-a.e.  $q \in \mathcal{Q}$, $A_q=\{a_0(q), \dotsc, a_{M-1}(q)\}$ such that
\begin{equation*}
\varphi_{q}^1(a_i(q))=a_{i+1 (\bmod M)}(\theta q) \quad \text{for } i = 0, \dotsc, M-1.
 \end{equation*}
\end{Def} 
Clearly a random periodic orbit is in particular $\varphi$-invariant. Furthermore, we say that $A:\mathcal{Q}\rightarrow\mathcal{P}(\mathbb{X})$ is a \emph{(weak) attracting random periodic orbit} if it is a random periodic orbit and a weak attractor.
Note that this can be seen as a discrete-time analogue to the continuous-time oriented definition of a random periodic solution \cite{zhao2009} and its generalization \cite{EngelKuehn2021}.

\subsection{Existence of weak attractors}
\label{sec:existence_weak_attr}

In this subsection we provide general conditions for an RDS as given in the previous section to admit a weak attractor. 
For this purpose, we combine \cite[Thm. 10]{crauel2015} on the existence of a weak attractor with properties of the stationary distribution 
(in case of its existence) 
for Markov chains on countable state spaces (cf.~\cite[Chapter 6]{durrett2019probability}).
%
\begin{Thm}
    \label{THM:existenceWeakAttractor}
    Consider an RDS $(\theta,\varphi)$ on $\mathbb{X}$, as defined in Sec.~\ref{sec:attractors_gen}, and
    assume that for every $x\in \mathbb{X}$ the set $\varphi^1_{\mathcal{Q}}(x):=\lbrace \varphi^1_q(x) : q\in\mathcal{Q}\rbrace \subset \mathbb{X}$ is finite, and that the Markov chain associated to $(\theta,\varphi)$ is irreducible and recurrent. If it admits a stationary distribution, then the RDS admits a weak attractor.
\end{Thm}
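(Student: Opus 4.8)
The plan is to verify the boundedness-in-probability hypothesis of the existence criterion \cite[Thm.~10]{crauel2015} and then read off the weak attractor; the whole point is to convert the probabilistic information carried by the stationary distribution into a uniform tightness statement for the one-point motion of the cocycle. First I would record that, for each $x\in\mathbb{X}$, the law of $\varphi^n_q(x)$ under $\mathbb{P}$ is exactly the $n$-step transition probability $p^n(x,\cdot)$ of the Markov chain associated with $(\theta,\varphi)$. Since the chain is irreducible and recurrent and admits a stationary distribution $\pi$, it is positive recurrent, $\pi$ is the unique stationary distribution, and $\pi(y)>0$ for every $y\in\mathbb{X}$. As $\pi$ is a probability measure on the countable set $\mathbb{X}$, it is automatically tight: for every $\varepsilon>0$ there is a finite set $F_\varepsilon$ with $\pi(\mathbb{X}\setminus F_\varepsilon)\le\varepsilon$.

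The crux is to promote this to tightness of the whole family $\{p^n(x,\cdot)\}_{n\ge 0}$, i.e.\ to produce, for each $x$ and each $\varepsilon>0$, a finite set $C_{x,\varepsilon}$ with $\sup_{n}p^n(x,\mathbb{X}\setminus C_{x,\varepsilon})\le\varepsilon$. I would obtain this by decomposing according to the period $d$ of the chain: on each cyclic subclass the $d$-step chain is aperiodic and positive recurrent, so by the convergence theorem $p^{nd+r}(x,\cdot)$ converges in total variation to a probability measure as $n\to\infty$, for each residue $r\in\{0,\dots,d-1\}$. A total-variation convergent sequence of probability measures on $\mathbb{X}$ is tight, and a finite union of tight families is tight; hence $\{p^n(x,\cdot)\}_n$ is tight. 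This periodicity bookkeeping is the step I expect to be the main obstacle, since the naive ergodic-average argument (Cesàro convergence of $\tfrac1N\sum_{n<N}p^n(x,\cdot)$ to $\pi$) would only yield boundedness in probability \emph{on average} rather than the uniform-in-$n$ control that the existence criterion requires.

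Finally I would assemble the RDS statement. For a finite (hence compact) set $B\subset\mathbb{X}$ and a finite $K\subset\mathbb{X}$, a union bound gives $\mathbb{P}(\varphi^n_q(B)\not\subset K)\le\sum_{x\in B}p^n(x,\mathbb{X}\setminus K)$; choosing $K=\bigcup_{x\in B}C_{x,\varepsilon/\#B}$ then yields $\sup_n\mathbb{P}(\varphi^n_q(B)\not\subset K)\le\varepsilon$, which is precisely boundedness in probability. The hypothesis that $\varphi^1_{\mathcal{Q}}(x)$ is finite ensures that forward images of finite sets remain finite and that the $\Omega$-limit set furnished by \cite[Thm.~10]{crauel2015} is a genuine finite (compact) random set in the sense of Definition~\ref{DEF:randomset}; invoking that theorem produces the desired weak attractor. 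I would also remark that, because $\pi(y)>0$ for all $y$, no fixed finite set can carry full asymptotic mass, so this attractor is necessarily a nontrivial, noise-dependent random set rather than a deterministic one — entirely consistent with the weak (moving) nature of the attraction encoded in \eqref{def:weak}.
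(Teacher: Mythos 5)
Your strategy is the one the paper itself follows: verify the boundedness-in-probability criterion of \cite[Thm.~10]{crauel2015} by passing to the cyclic decomposition $\mathbb{X}=W_0\cup\cdots\cup W_{\ell-1}$ of the irreducible chain, applying the aperiodic convergence theorem to the $\ell$-step chain on each class, and reassembling over the residues (where the finiteness of $\varphi^r_{\mathcal{Q}}(x)$ enters). The identification of the limit measures, the tightness bookkeeping, and the extension from points to finite sets all match the paper's Steps 1--4.

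There is one place where your write-up does not verify the criterion as quoted. That criterion demands a single compact set $C_\varepsilon$, fixed \emph{before} the attracted compact set, such that for every compact $K$ there is an $n_0(K)$ with $\mathbb{P}(\varphi^n_q(K)\subset C_\varepsilon)\geq 1-\varepsilon$ for all $n\geq n_0(K)$: only the waiting time may depend on $K$, not the absorbing set. Because you ask for tightness of $\{p^n(x,\cdot)\}_{n\geq 0}$ uniformly over \emph{all} $n$ (including the transient regime), your sets $C_{x,\varepsilon}$ necessarily depend on the starting point, and your final target $\bigcup_{x\in B}C_{x,\varepsilon/\#B}$ depends on $B$; as it stands this is a strictly weaker statement and does not by itself produce a compact attractor. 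The repair is exactly what the paper does: the total-variation limits of $p^{n}(x,\cdot)$ range over only the finitely many measures $\tilde\rho_0,\dots,\tilde\rho_{\ell-1}$ obtained by conditioning the stationary distribution $\rho$ on the cyclic classes, so one can fix $C_\varepsilon$ once and for all with $\tilde\rho_i(\mathbb{X}\setminus C_\varepsilon)<\varepsilon/2$ for every $i$, and then use your convergence statement only to conclude $p^n(x,\mathbb{X}\setminus C_\varepsilon)<\varepsilon$ for all $n\geq n_0(x)$ --- pushing the $x$-dependence into the waiting time rather than into the target set, and taking a maximum of the finitely many $n_0(x)$, $x\in K$, at the end. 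With that adjustment your argument coincides with the paper's proof.
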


\begin{proof}
    In order to prove the theorem we use the following (in fact, equivalent) criterion for the existence of a weak attractor from \cite[Thm. 10]{crauel2015}:
For every $\varepsilon > 0$ there exists a compact set $C_{\varepsilon} \subset \mathbb{X}$ such that for every compact set $K \subset \mathbb{X}$ there is a $n_0 \in \mathbb{N}$ so that for all $n \geq n_0$
$$ \mathbb P  ( \varphi_q^n (K) \subset C_{\varepsilon}) \geq 1 - \varepsilon.$$
Recall that a recurrent state $x\in\mathbb{X}$ has period $\ell\geq 1$ if $\ell$ is the greatest common denominator of the set 
\[
    \left\lbrace n\geq 1 : \mathbb{P}\left(\varphi^n_{\omega}(x)=x  \right)>0 \right\rbrace.
\]
Since the chain is irreducible, every state has the same period. Moreover, the state space admits a \textit{cyclic decomposition} given as $\mathbb{X}=W_0\cup W_1\cup\cdots\cup W_{\ell-1}$, where for $x\in W_i$ and $i\in\lbrace0,1,\ldots,\ell-1\rbrace$
\[
    \mathbb{P}\left(\varphi^1_q(x)\in W_j \right)=
    \begin{cases}
        1 & \text{if } j=i+1 \ (\text{mod } \ell),\\
        0 & \text{otherwise}
    \end{cases},
\]
and the $\ell$-step process $\left( \varphi^{\ell n}_q(x) \right)_{n\in\mathbb{N}_0}$ is aperiodic and irreducible in each $W_i$ (see for instance \cite[Lemma 6.7.1]{durrett2019probability}). Denote by $\rho$ the unique stationary distribution of the system and let $\varepsilon$ be arbitrarily small. Consider $z_\varepsilon\in\mathbb{N}$ large enough such that
\begin{equation}\label{max_rho}
    \max_{0,1,\ldots\ell-1}\frac{\rho((\mathbb{X}\setminus C_\varepsilon) \cap W_i))}{\rho(W_i)}<\frac{\varepsilon}{2}
\end{equation}
for the set $C_{\varepsilon}:=\lbrace 0,1,2,\ldots z_\varepsilon\rbrace^L$. 

\noindent \textbf{Step 1.} For each $i=0,\ldots,\ell-1$, the $\ell$-step process $(\varphi_q^{\ell n}(x))_{n \in \mathbb{N}_0}$ starting in $x\in W_i$ admits a unique stationary distribution $\tilde{\rho}_i$ supported on $W_i$. 

For $i=0,\ldots,\ell-1$ and $A\subset \mathbb{X}$ set
\begin{equation}\label{tilde_rho}
    \tilde{\rho}_i(A):=\frac{\rho(A \cap W_i)}{\rho(W_i)}.
\end{equation}
Then, for any $n\in \mathbb{N}$ we have
\begin{align*}
    \sum_{x=0}^{\infty}{\mathbb{P}(\varphi_q^{\ell n}(x)\in A) \tilde{\rho}_i(x)} 
     & =
    \frac{1}{\rho(W_i)}\sum_{x \in W_i}{\mathbb{P}(\varphi_q^{\ell n}(x)\in A) \rho(x)} \\
         & \stackrel{(*)}{=}
    \frac{1}{\rho(W_i)}\sum_{x \in W_i}{\mathbb{P}(\varphi_q^{\ell n}(x)\in A \cap W_i) \rho(x)} \\
             & =
    \frac{1}{\rho(W_i)}\sum_{x=0}^\infty{\mathbb{P}(\varphi_q^{\ell n}(x)\in A \cap W_i) \rho(x)} \\
    & =\frac{\rho(A\cap W_i)}{\rho(W_i)}=  \tilde{\rho}_i(A),
\end{align*}
    which implies that $\tilde{\rho}_i$ is a stationary distribution of $(\varphi_q^{\ell n}(x))_{n \in \mathbb{N}_0}$  on $W_i$. In $(*)$ we used the fact that $\mathbb{P}(\varphi_q^{\ell n}(x_0)=y)=0$  $\forall x_0\in W_i, y \notin W_i$. 
    As the $\ell$-step process is irreducible in each $W_i$, the stationary distribution is unique (see \cite[Theorem 6.5.5]{durrett2019probability}).
    
\noindent \textbf{Step 2.} Let $x\in \mathbb{X}$ and $\varepsilon>0$ be given as before.
Then, there exists $n_1= n_1(x)\in\mathbb{N}$ such that ${\mathbb{P}(\varphi_q^{\ell n}(x)\not\in C_\varepsilon)<\varepsilon}$ for all $n\geq n_1$.

Consider $x\in W_i$ for any $i\in\lbrace0,\ldots,\ell-1\rbrace$. Since the $\ell$-step process $(\varphi_q^{\ell n}(x))_{n \in \mathbb{N}_0}$ is irreducible and admits a stationary distribution, it is then positive recurrent on $W_i$. Furthermore, since it is aperiodic in $W_i$, convergence $\mathbb{P}(\varphi_q^{\ell n}(x)\in \cdot)\rightarrow \tilde{\rho}_i$ in total variation holds as $n \to \infty$, see \cite[Theorem 6.6.4]{durrett2019probability}. 
This implies in particular the convergence $\mathbb{P}(\varphi_q^{\ell n}(x)\not\in C_\varepsilon)\rightarrow \tilde{\rho}_i( \mathbb{X}\setminus C_{\varepsilon} )$. Hence, there exists $n_1= n_1(x)\in\mathbb{N}$ such that for any $n\geq n_1$ we obtain
\begin{equation}\label{P_C_eps}
     \mathbb{P}(\varphi_q^{\ell n}(x)\not\in C_\varepsilon) < \tilde{\rho}_i( \mathbb{X}\setminus C_{\varepsilon} )+\frac{\varepsilon}{2} \stackrel{\eqref{tilde_rho}}{=}\frac{\rho((\mathbb{X}\setminus C_{\varepsilon})\cap W_i)}{\rho(W_i)}+\frac{\varepsilon}{2} \stackrel{\eqref{max_rho}}{<} \varepsilon.
\end{equation}

\noindent \textbf{Step 3.} Let $x\in \mathbb{X}$ be an arbitrary initial condition. Then, there exists $n_2= n_2(x)\in\mathbb{N}$ such that for all $n\geq n_2$ we have ${\mathbb{P}(\varphi_q^{n}(x)\not\in C_{\varepsilon})<\varepsilon}$. 

Since $\varphi^1_{\mathcal{Q}}(x)$ is assumed to be finite, also $\varphi^r_{\mathcal{Q}}(x):=\lbrace \varphi^r_q(x) : q\in\mathcal{Q} \rbrace$ is finite for any $r\in\mathbb{N}_0$. Let $n=k\ell+r$ for some $k\in\mathbb{N}_0$ and $r\in\lbrace0,1,\ldots,\ell-1 \rbrace$. Note that
\[\mathbb{P}\left(\varphi_q^{\ell k +r}(x)\not\in C_\varepsilon\right) = \sum_{y\in\varphi^r_{\mathcal{Q}}(x)} \mathbb{P}\left(\varphi^{\ell k}_q(y)\not\in C_\varepsilon\right)\cdot \mathbb{P}\left(\varphi^r_q(x)=y\right).
\]
Let $n_1(y)$ be given from Step 2 above, and choose $k\geq \max \lbrace n_1(y) : y\in \varphi^r_{\mathcal{Q}}(x)\rbrace$. Then
\[\mathbb{P}(\varphi_q^{\ell k +r}(x)\not\in C_\varepsilon) \stackrel{\eqref{P_C_eps}}{<}\varepsilon \cdot \sum_{y\in\varphi^r_{\mathcal{Q}}(x)} \mathbb{P}(\varphi^r_q(x)=y)\ =\varepsilon.
\]
 So, the claim follows by taking $n_2(x):=\max\lbrace \ell\cdot n_1(y) :   y\in\varphi^r_\mathcal{Q}(x), r\in\lbrace0,1,\ldots,\ell-1 \rbrace\rbrace$.

\noindent \textbf{Step 4.} For each $K\subset \mathbb{N}_0$ compact (i.e.~finite) there exists $n_0= n_0(K)\in\mathbb{N}$ such that 
$\mathbb{P}(\varphi_q^{n}(x)>z)<\varepsilon$ for all $n\geq n_0$ and $x\in K$.

Notice that Step 3 assures that the claim holds when $K=\lbrace x \rbrace$ for any $x\in \mathbb{N}_0$. If we consider any finite set $K=\lbrace x_1,x_2,\ldots,x_l\rbrace$, then the result follows by taking $n_0(K)=\max_{i=1,\ldots, l}n_2( x_i)$.
\end{proof}

Note that, by virtue of Theorem~\ref{THM:existenceWeakAttractor}, the existence of the weak attractor can be derived purely by Markov chain arguments and is expected to occur in a large class of RDS derived from reaction jump processes via the embedded Markov chain approach. For instance, since chemical reaction networks are defined via a finite set of reactions, $\varphi^1_{\mathcal{Q}}(x)$ is always finite for any $x\in\mathbb{X}$. In particular, using Theorem~\ref{THM:existenceWeakAttractor} we can directly show that the RDS corresponding to the embedded Markov chains of the birth-death process and the Schl\"ogl model admit a weak attractor, as presented in the next sections.

\subsection{Random periodic orbit of the birth-death chain} \label{sec:birth-death_attr}
The concept of a random attractor makes sense only when considering an invertible dynamical system on $\mathcal{Q_+}$. However, in the RDS formulation of the embedded Markov chain (see Sec.~\ref{Sec:RDS}), the shift map is not invertible since for $q_0\neq \tilde{q}_0$ we have that
\[
    \theta(q_0,q_1,q_2,\ldots)=\theta(\tilde{q}_0,q_1,q_2)=(q_1,q_2,\ldots).
\]
We can come around this inconvenience by redefining our noise space as
\[\mathcal{Q}:=\lbrace q=(q_n)_{n\in\mathbb{Z}} : q_n\in[0,1] \rbrace,\]
endowed with its Borel $\sigma$-algebra $\sigma(\mathcal{Q})$ and the bi-infinite product measure $\lambda^{\mathbb{Z}}$. We redefine the shift map as 
\begin{equation*} 
   \theta q=\theta (q_n)_{n\in\mathbb{Z}} =(q_{n+1})_{n\in \mathbb{Z}}
\end{equation*}
such that $(\theta^{-n}q)_i = q_{i-n}$, 
while the cocycle map $\varphi$ remains the same, see \eqref{eq:iterates_cocycles}.
Note that the synchronization results in Section~\ref{sec:synchro} transfer immediately to the invertible setting since they have been formulated independently from the past. By abuse of notation, we will from now on use $\mathbb P=\lambda^{\mathbb{Z}}$.

\subsubsection{Weak attraction}

 In this subsection we provide a full characterization of the weak random attractor for the birth-death process given in Example~\ref{ex:birth-death}. 

\begin{prop}
\label{PROP:stationary distribution}
    The embedded Markov chain of the birth-death process 
    admits a unique stationary distribution. Therefore, the associated RDS $(\theta,\varphi)$ admits a (unique) weak attractor.
\end{prop}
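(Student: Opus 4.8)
The plan is to verify the three hypotheses of Theorem~\ref{THM:existenceWeakAttractor} for the embedded birth-death chain on $\mathbb{X}=\mathbb{N}_0$ and then read off existence of the weak attractor, with uniqueness coming for free from the general theory. Two of the hypotheses are essentially immediate: since each reaction changes the state by $\pm 1$, the set $\varphi^1_{\mathcal{Q}}(x)\subset\{x-1,x+1\}$ is finite for every $x$, and irreducibility follows at once from the nearest-neighbour structure recorded in \eqref{eq:trans_prob}. The substantive part is the existence of a stationary distribution; by establishing \emph{positive} recurrence this single step will simultaneously deliver the required recurrence. The key idea is that a nearest-neighbour chain on a half-line is reversible and therefore carries an explicit candidate invariant measure obtained from detailed balance.

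First I would record the one-step probabilities from \eqref{eq:trans_prob}: from state $x$ the chain moves to $x+1$ with probability $P_1(x)=\rate_1/(\rate_1+\rate_2 x)$ and to $x-1$ with probability $P_{-1}(x)=\rate_2 x/(\rate_1+\rate_2 x)$. Since $P_1(x)>0$ for every $x\in\mathbb{N}_0$ and $P_{-1}(x)>0$ for every $x\geq 1$ (with $P_1(0)=1$ keeping the chain inside $\mathbb{N}_0$), any two states communicate by moving one step at a time, so the chain is irreducible.

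For the stationary distribution I would invoke the standard positive-recurrence criterion for birth-death chains (cf.~\cite[Chapter 6]{durrett2019probability}). Setting $\pi_0:=1$ and, motivated by the detailed-balance relation $\rho(x)P_1(x)=\rho(x+1)P_{-1}(x+1)$,
\[
  \pi_n := \prod_{x=0}^{n-1}\frac{P_1(x)}{P_{-1}(x+1)},\qquad n\geq 1,
\]
the sequence $\pi$ is, up to normalisation, the unique reversible (hence invariant) measure, and the chain is positive recurrent precisely when $\sum_n \pi_n<\infty$. Here the verification reduces to a one-line ratio-test computation: since
\[
  \frac{\pi_{n+1}}{\pi_n}=\frac{P_1(n)}{P_{-1}(n+1)}
  =\frac{\rate_1}{\rate_1+\rate_2 n}\cdot\frac{\rate_1+\rate_2(n+1)}{\rate_2(n+1)}
\]
tends to $0$ as $n\to\infty$ (the birth probability decays like $1/n$ while the death probability tends to $1$), the series $\sum_n\pi_n$ converges. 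If a closed form is preferred, telescoping the second factor gives $\pi_n=(\rate_1/\rate_2)^n(\rate_1+\rate_2 n)/(\rate_1\,n!)$, whose sum is a finite multiple of $e^{\rate_1/\rate_2}$. Thus the chain is positive recurrent, which yields both recurrence and a unique stationary distribution $\rho=\pi/\sum_m\pi_m$.

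Having checked that $\varphi^1_{\mathcal{Q}}(x)$ is finite and that the associated Markov chain is irreducible, recurrent, and admits a stationary distribution, Theorem~\ref{THM:existenceWeakAttractor} produces a weak attractor for $(\theta,\varphi)$; its uniqueness is then automatic from the general uniqueness of weak attractors recalled earlier (\cite[Lemma 1.3]{flandoli2016}). The only genuine obstacle is the summability of $(\pi_n)$, and this is mild: because the strong drift back towards the origin forces $P_1(n)\to 0$ and $P_{-1}(n+1)\to 1$, the ratio test settles positive recurrence without any delicate estimate.
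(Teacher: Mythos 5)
Your proof is correct and follows essentially the same route as the paper: both reduce the claim to the summability of $\sum_{n}\prod_{j=0}^{n-1}P_1(j)/P_{-1}(j+1)$ (the reversible measure of the nearest-neighbour chain) and then invoke Theorem~\ref{THM:existenceWeakAttractor}. The only difference is cosmetic: you settle convergence by a ratio test (or the exact closed form summing to a multiple of $e^{\rate_1/\rate_2}$), which is a more direct verification than the paper's upper bound $\sum_x\beta^x/x!$ followed by Stirling's approximation, and you additionally make the irreducibility and recurrence hypotheses explicit where the paper leaves them implicit.
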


\begin{proof}
Recall that the transition probabilities for the embedded Markov chain for the birth-death process are given in \eqref{eq:trans_prob}. 

    From \cite[p. 304]{durrett2019probability}, we know that any of the chains with transition probabilities given by \eqref{eq:trans_prob} or \eqref{eq:Schloegl_P} admits a unique stationary distribution if and only if
\begin{equation}
    \label{eq:condition_stationary_1}
    \zeta:=\sum_{x=1}^{\infty}\prod_{j=0}^{x-1}\frac{P_1(j)}{P_{-1}(j+1)}<\infty.
\end{equation}
We show that in both cases \eqref{eq:condition_stationary_1} holds.
We consider the transition probabilities \eqref{eq:trans_prob}, and for simplicity let $\alpha:=\frac{\rate_2}{\rate_1} > 0$. Then,
\begin{align}
    \label{eq:condition_stationary_2}
    \zeta&=  \sum_{x=1}^{\infty}\prod_{j=0}^{x-1}\frac{1+\alpha(j+1)}{(1+\alpha j)\alpha(j+1)} =
    \sum_{x=1}^{\infty}\prod_{j=0}^{x-1}\left( 1+\frac{\alpha}{1+\alpha j} \right)\cdot \frac{1}{\alpha(j+1)}  \nonumber\\
    & \quad  \leq  \sum_{x=1}^{\infty}\prod_{j=0}^{x-1}\left( \frac{1+\alpha}{\alpha} \right)\cdot\frac{1}{j+1} =\sum_{x=1}^{\infty} \frac{\beta^x}{x!},
\end{align}
where $\beta=(1+\alpha)/\alpha$. Recall that by Stirling's approximation we have that $x!/(\sqrt{2\pi x}(\frac{x}{e})^x)\rightarrow 1$ as $x\rightarrow \infty$. Then, there exists  $N\in\mathbb{N}$ such that for all $x\geq N$ we have
\begin{equation}
    \label{eq:condition_stationary_3}
    \frac{1}{x!}<\frac{2}{\sqrt{2\pi x}\left( \frac{x}{e}\right)^x}.
\end{equation}
From \eqref{eq:condition_stationary_2} and \eqref{eq:condition_stationary_3}, and assuming without loss of generality that $N\geq 2\pi$, it follows that
\[
    \zeta < S+ \sqrt{2/\pi}\sum_{x=N}^{\infty} \beta^x\cdot \frac{1}{\sqrt{x}\left( \frac{x}{e} \right)^x}\leq 
    S+ \sqrt{2/\pi}\sum_{x=N}^{\infty} \left(\frac{e\beta}{x}\right)^x
\]
for $S:=\sum_{x=1}^{N-1}\beta^x\cdot \frac{1}{x!}$. 
Hence, it suffices to show that
\[
    \sum_{x=N}^{\infty} \left(\frac{e\beta}{x} \right)^x<\infty.
\]
Assume again without loss of generality that $N$ is big enough so that $\frac{e\beta}{x}<\frac{1}{e}$ for all $x\geq N$. Thus,
\[
    \sum_{x=N}^{\infty}{\left( \frac{e\beta}{x}\right)^x}<\sum_{x=N}^{\infty}{e^{-x}}<\infty,
\]
from which we finally deduce $\zeta <\infty$.
\end{proof}

Figure~\ref{fig:BDP_stat_distr} shows the stationary distribution of the one- and two-point motion for the birth-death chain. The state space of the two-point motion splits up into a transient area $\mathbb{X}^2\setminus \mathbb{D}$ and two communication classes $\Delta$ and $\mathbb{D}\setminus \Delta = I_{-1}\cup I_1$ \cite{baxendale1991statistical} where $\Delta$ is the diagonal given in  \eqref{def:diagonal} and $\mathbb{D}$ is the thick diagonal defined in \eqref{D}. The respective stationary distributions $\pi_{\Delta}$ and $\pi_{\mathbb{D}\setminus \Delta}$  are plotted in Figure~\ref{fig:BDP_stat_distr}(b)-(c).


\begin{figure}
    \centering
     \begin{subfigure}[b]{0.52\textwidth}
        \centering
              \begin{overpic}[width=\textwidth]{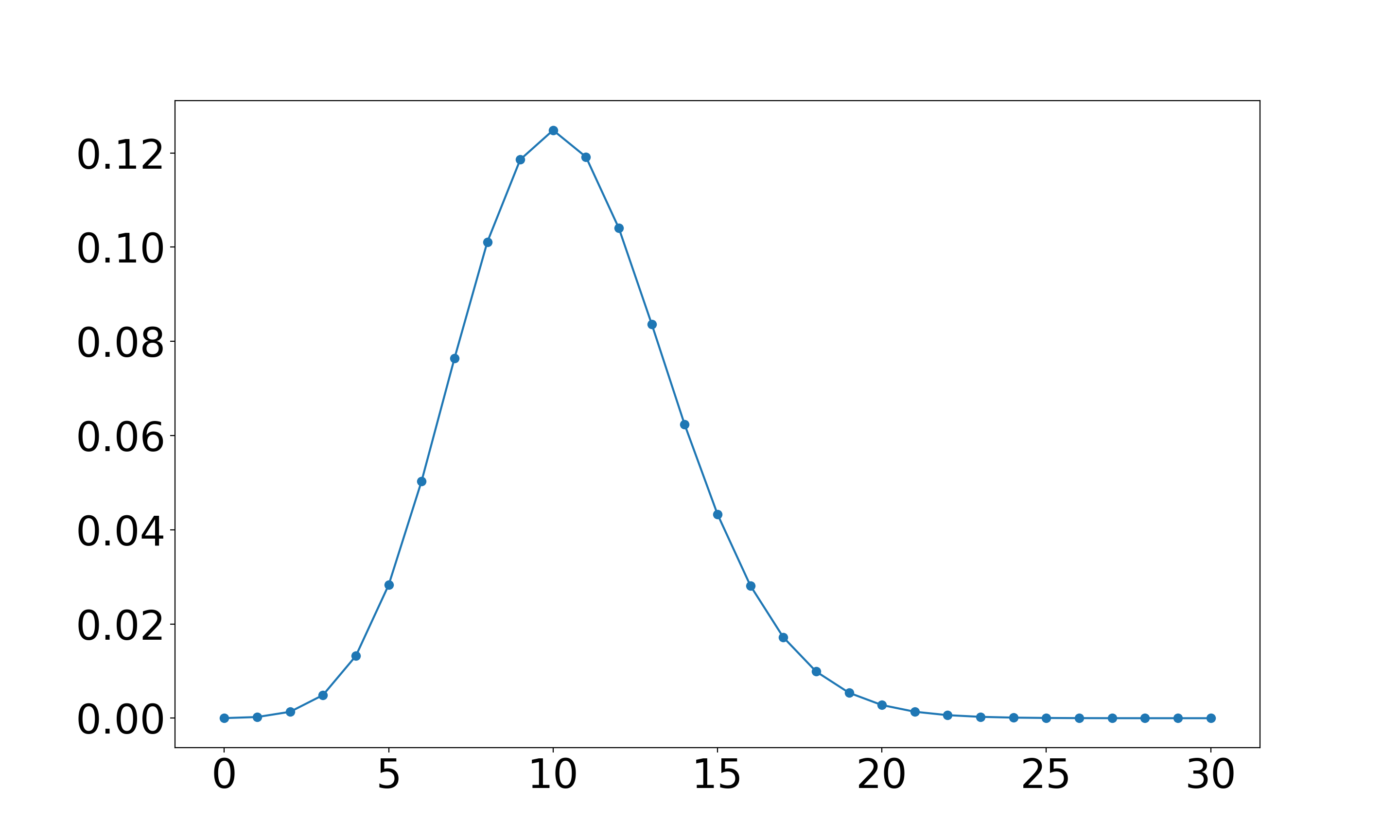}
        \put(0,25){\scriptsize \begin{turn}{90} $\rho(x)$ \end{turn} }
        \put(50,0){\scriptsize $x$}
        \end{overpic}
        \caption{}
    \end{subfigure}
    \hfill
    \begin{subfigure}[b]{0.49\textwidth}
        \centering
                \begin{overpic}[width=\textwidth]{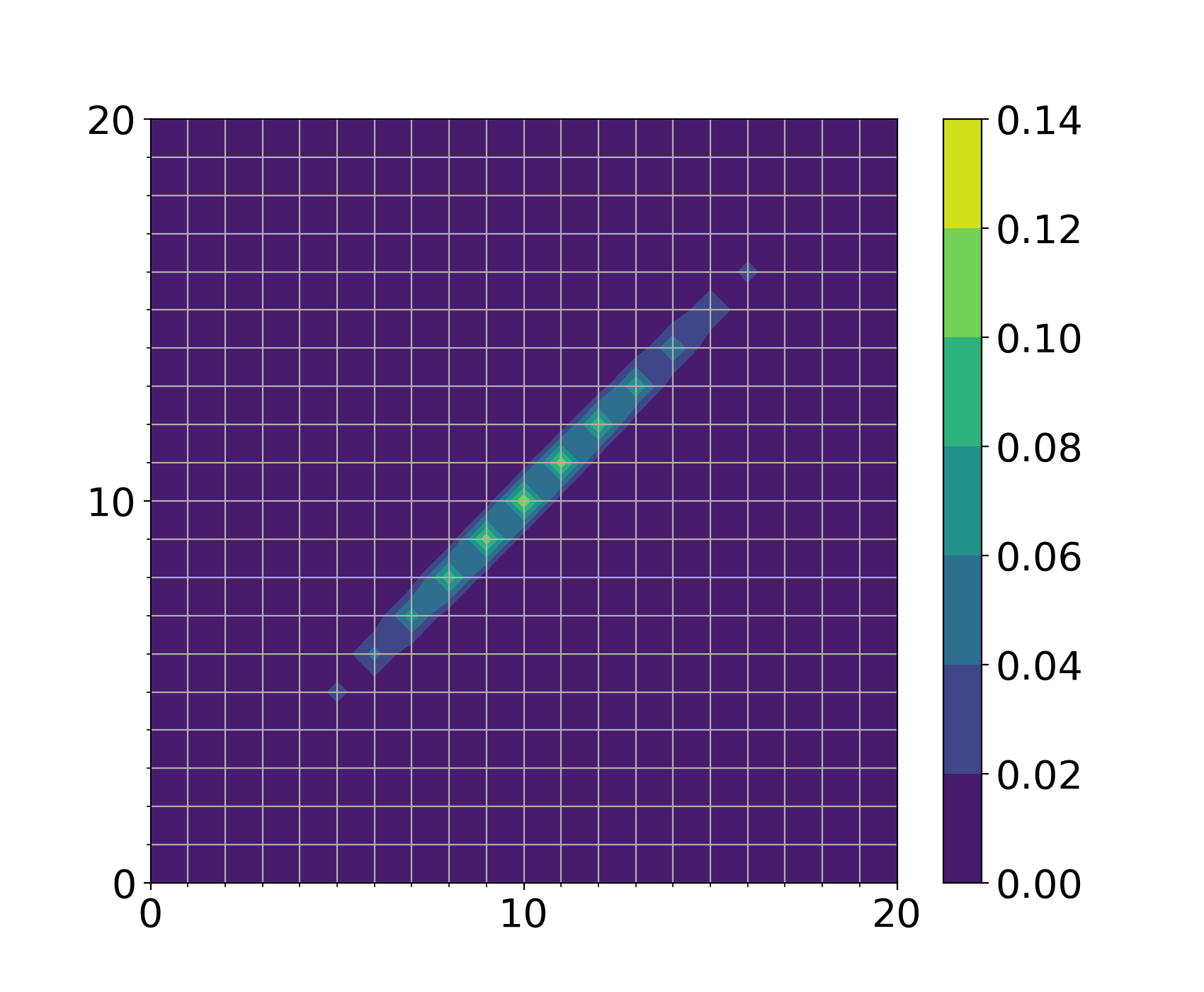}
        \put(3,40){\scriptsize \begin{turn}{90} $y$ \end{turn} }
        \put(43,2){\scriptsize $x$}
        \put(90,33){\scriptsize \begin{turn}{90} $\pi_{\Delta}(x,y)$ \end{turn} }
        \end{overpic}
        \caption{}
    \end{subfigure}
    \hfill
     \begin{subfigure}[b]{0.49\textwidth}
        \centering
        \begin{overpic}[width=\textwidth]{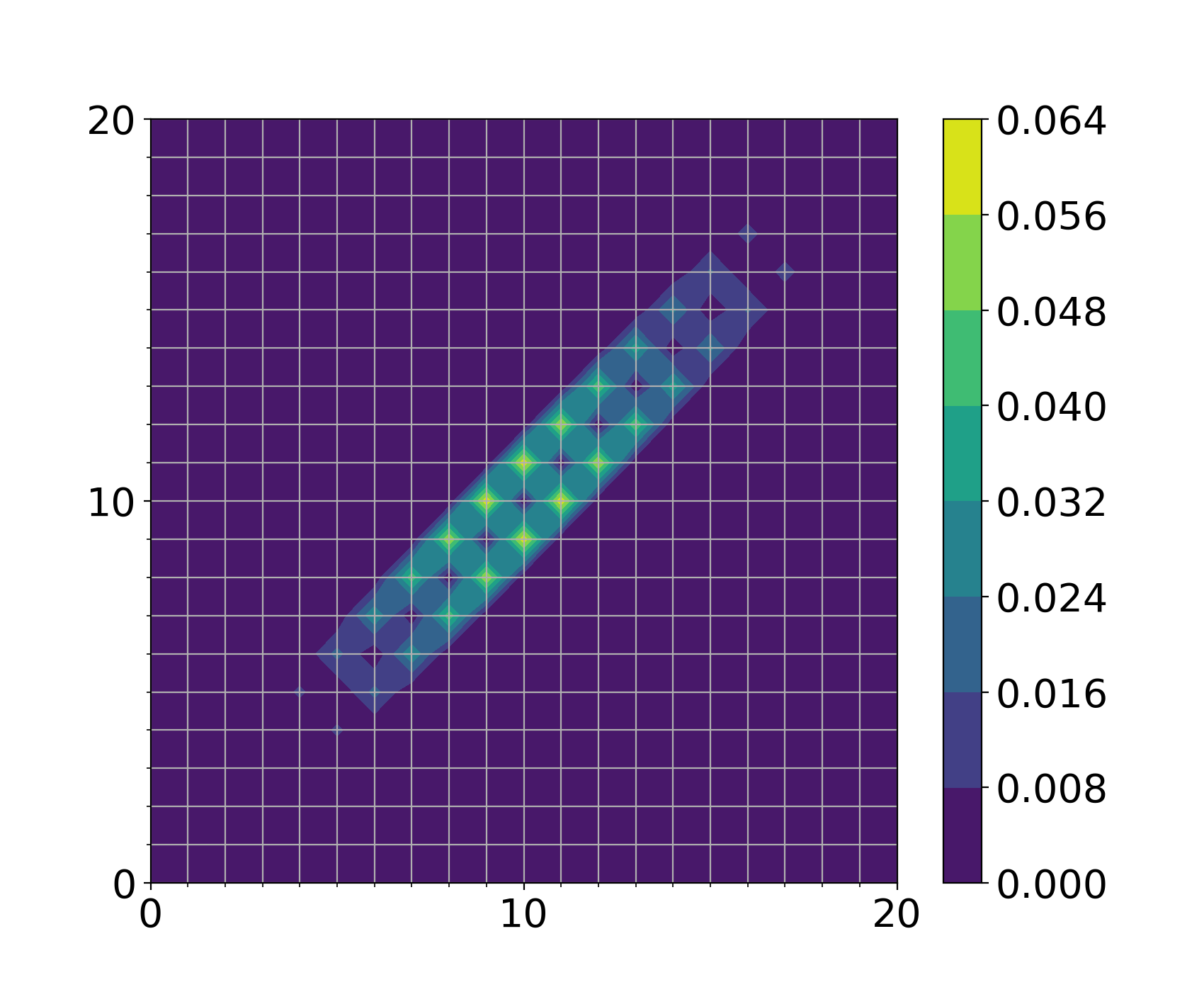}
        \put(3,40){\scriptsize \begin{turn}{90} $y$ \end{turn} }
        \put(43,2){\scriptsize $x$}
        \put(93,33){\scriptsize \begin{turn}{90} $\pi_{\mathbb{D}\setminus\Delta}(x,y)$ \end{turn} }
        \end{overpic}
        \caption{}
    \end{subfigure}
    \hfill
    \caption{Stationary distributions of one- and two-point motion of the birth-death chain. (a) Stationary distribution $\rho$ of the one-point motion and (b), (c) stationary distributions $\pi_{\Delta}$ and $\pi_{\mathbb{D}\setminus \Delta}$ of the two-point motion, respectively.  
    All stationary distributions are computed as the solution of their eigenvector equation $\rho^TP=\rho^T$ for $P$ the corresponding transition matrix. $\rate_1 =10, \rate_2 = 1$.
    }
    \label{fig:BDP_stat_distr}
\end{figure}

In the following, we investigate the structure of the weak attractor by formulating  Proposition~\ref{PROP:attractor2points} and Corollary~\ref{COR:limit Exists} as preparatory work for Section~\ref{sec:strong_pullback}, where we will show that the weak attractor is in fact a strong pullback attractor consisting of two random points that have distance one. 
The crucial insight is the translation of the partial synchronization results from Proposition~\ref{PROP:synchronisation} and Corollary~\ref{COR:partial synchronization} into the random periodic structure of the random attractor. Depending on the type of partition in such a partial synchronization, the following analysis may well be understood as a blueprint for various forms of augmented Markov chains with more complicated structure.
\begin{prop}
\label{PROP:attractor2points}
    The weak attractor $q\mapsto A_q$ of the birth-death chain has two points on each fiber $\mathbb{P}$-a.s., that is 
    \[
        \mathbb{P}(\#A_q=2)=1.
    \]
Moreover, $A_q\cap W_i\neq \emptyset$ for $i=0,1$ $\mathbb{P}$-a.s., where $W_i$ are given in Corollary \ref{COR:partial synchronization}.
\end{prop}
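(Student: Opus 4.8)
The plan is to exploit two ingredients: the $\varphi$-invariance of the attractor (Definition~\ref{DEF:invariance}), which couples the fibers $A_q$ and $A_{\theta^n q}$, and the partial-synchronization machinery of Section~\ref{sec:synchro}, in particular Proposition~\ref{Prop:randomConvProb}, which forces any finite random set contained in a single parity class $W_i$ to collapse to a singleton under the cocycle. Throughout I use that the attractor is a nonempty finite set on $\mathbb{P}$-a.e.\ fiber (Proposition~\ref{PROP:stationary distribution}), that the transition map flips parity, so $f_q(W_i)\subset W_{1-i}$ and hence $\varphi^n_q(W_i)\subset W_{(i+n)\bmod 2}$, and that the shift $\theta$ is a Bernoulli shift on $\mathcal{Q}=[0,1]^{\mathbb{Z}}$ and thus mixing, so that $\theta^2$ is ergodic.

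First I would establish the ``moreover'' part, namely $A_q\cap W_i\neq\emptyset$ for $i=0,1$. Set $g_i(q):=\mathbbm{1}\{A_q\cap W_i\neq\emptyset\}$ and $B_i:=\{g_i=1\}$. From $\varphi^1_q(A_q)=A_{\theta q}$ together with $f_q(A_q\cap W_0)=A_{\theta q}\cap W_1$ and $f_q(A_q\cap W_1)=A_{\theta q}\cap W_0$ (the two images land in disjoint classes and their union is $A_{\theta q}$), and since $f_q$ maps nonempty sets to nonempty sets, I get $g_1(q)=g_0(\theta q)$, whence $B_1=\theta^{-1}B_0$ and $g_0$ is $\theta^2$-invariant. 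By ergodicity of $\theta^2$ we have $\mathbb{P}(B_0)\in\{0,1\}$, while $\mathbb{P}(B_1)=\mathbb{P}(\theta^{-1}B_0)=\mathbb{P}(B_0)$. As $A_q\neq\emptyset$ forces $\mathbb{P}(B_0\cup B_1)=1$, both probabilities must equal $1$. In particular $\#A_q\geq 2$ $\mathbb{P}$-a.s.

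Next I would pin down $\#A_q\leq 2$ by showing each $A_q\cap W_i$ is a singleton. Fix $i=0$ and put $K_q:=A_q\cap W_0$; by the previous step this is a nonempty finite random set with $K_q\subset W_0$ $\mathbb{P}$-a.s., so Proposition~\ref{Prop:randomConvProb} (which carries over verbatim to the bi-infinite noise space) yields $\#\varphi^n_q(K_q)\to 1$ in probability. For \emph{even} $n$ the invariance $\varphi^n_q(A_q)=A_{\theta^n q}$ splits along parity classes as $\varphi^n_q(A_q\cap W_0)=A_{\theta^n q}\cap W_0$, so that $\#\bigl(A_{\theta^n q}\cap W_0\bigr)\to 1$ in probability along even $n$. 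Writing $Z(q):=\#(A_q\cap W_0)$, measure-preservation of $\theta$ gives that $Z\circ\theta^n$ is equidistributed with $Z$; convergence in probability of $Z\circ\theta^n$ to the constant $1$ then forces $\mathbb{P}(Z\neq 1)=\mathbb{P}(Z\circ\theta^n\neq 1)\to 0$, i.e.\ $Z=1$ $\mathbb{P}$-a.s. The same argument for $i=1$ gives $\#(A_q\cap W_1)=1$, and combining the two parity classes yields $\#A_q=2$ $\mathbb{P}$-a.s.

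The main obstacle is the second step: one must convert the \emph{in-probability} collapse statement of Proposition~\ref{Prop:randomConvProb} into a \emph{pathwise} ($\mathbb{P}$-a.s.) cardinality bound on the attractor. The key device is the stationarity trick — the sequence $Z\circ\theta^n$ is identically distributed, so a limit in probability equal to a constant upgrades to an almost-sure value — which crucially relies on identifying $\varphi^n_q(A_q\cap W_0)$ with $A_{\theta^n q}\cap W_0$ for even $n$. This identification in turn depends on restricting to even $n$ so that the parity-flipping cocycle maps $W_0$ back into $W_0$; getting this bookkeeping right, and checking the measurability of $q\mapsto A_q\cap W_i$ needed to apply Proposition~\ref{Prop:randomConvProb}, is the delicate part, whereas the nonemptiness step is comparatively soft once $\theta^2$-ergodicity is invoked.
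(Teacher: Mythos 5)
Your proposal is correct, but it diverges from the paper's argument in the first half. For the lower bound $A_q\cap W_i\neq\emptyset$, the paper constructs explicit elements of the attractor: it takes the pullback limit superior $a_x^+(q)=\limsup_{n}\varphi^{2n}_{\theta^{-2n}q}(x)$ and uses weak attraction along a subsequence to show $a_x^+(q)\in A_q$ almost surely, with the parity of $a_x^+(q)$ matching that of $x$. You instead run a soft ergodicity argument: the relations $g_0\circ\theta=g_1$ and $g_1\circ\theta=g_0$ (which do follow correctly from $f_q(A_q\cap W_i)=A_{\theta q}\cap W_{1-i}$) make $g_0$ invariant under $\theta^2$, which is ergodic since the shift is Bernoulli, and nonemptiness of $A_q$ then forces both events to have probability one. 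Both arguments are valid; yours is shorter and avoids any pullback construction, but the paper's explicit points $a_x^+(q)$ are not merely a proof device — they are reused in Corollary~\ref{COR:limit Exists} and Theorem~\ref{THM:strongAttractor} to exhibit the attractor as a random periodic orbit and to upgrade weak to strong pullback attraction, so the constructive route pays off downstream. For the upper bound your argument is essentially the paper's: both hinge on the measure-preservation identity $\mathbb{P}(\#(A_q\cap W_0)\geq 2)=\mathbb{P}(\#\varphi^{2n}_q(A_q\cap W_0)\geq 2)$ together with Proposition~\ref{Prop:randomConvProb} applied to the finite random set $K_q=A_q\cap W_i$; the paper packages this as a direct bound on $\mathbb{P}(\#A_q\geq 3)$ by a union over the two parity classes, whereas you phrase it as the stationary sequence $Z\circ\theta^{2n}$ converging in probability to the constant $1$, which gives the marginally stronger conclusion $\#(A_q\cap W_i)=1$ for each $i$ directly (the paper obtains this only afterwards by combining the two steps). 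Your caveats about measurability of $q\mapsto A_q\cap W_i$ and about transporting Proposition~\ref{Prop:randomConvProb} to the two-sided noise space are appropriate and unproblematic.
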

\begin{proof}
We split the proof in two parts. We first show that $A_q$ has at least two points $\mathbb{P}$-a.s., and then we conclude that $\mathbb{P}(\# A_q\geq 3)=0$.

\noindent \textbf{Step 1.}  $A_q\cap W_i\neq \emptyset$ for $i=0,1$ $\mathbb{P}$-a.s.

For each $x\in\mathbb{N}_0$ consider the pullback limit
\begin{equation*}
        a_x^+(q):=\limsup_{n\rightarrow\infty}\varphi^{2n}_{\theta^{-2n}q}(x).
\end{equation*}
Let $(n_j)_{j=0}^{\infty}$ be a strictly increasing sequence of natural numbers such that $\varphi^{2n_j}_{\theta^{-2n_j}q}(x)\rightarrow a_x^+(q)$ for $j\to \infty$. Since $A_q$ is a weak attractor we have
\[
    \lim_{j\rightarrow \infty} \mathbb{P}\left( d\left( \varphi^{2n_j}_{\theta^{-2n_j}q}(x), A_q \right)\geq 1 \right)=0.
\]
From the last limit we consider a subsequence $n_{j_k}$ such that $\varphi^{2n_{j_k}}_{\theta^{-2n_{j_k}}q}(x)\in A_q$ for all $k\in\mathbb{N}_0$ $\mathbb{P}$-a.s. We conclude that $a_x^+(q)\in A_q$ for $\mathbb{P}$-a.e. $q$, which in particular implies that $a_x^+<\infty$ $\mathbb{P}$-a.s. Furthermore, since $a_x^+(q)\in W_i$ if and only if $x\in W_i$, the claim follows by taking $x=0,1$, for instance.

\noindent \textbf{Step 2.} $\mathbb{P}(\# A_q\geq 3)=0$. 

Since $\theta$ is $\mathbb{P}$-invariant, we have that for any $n\in\mathbb{N}$
\[
    \mathbb{P}(\# A_q\geq 3)= \mathbb{P}(\# A_{\theta^n q}\geq 3)=\mathbb{P}(\# \varphi^n_q(A_q)\geq 3),
\]
where the last equality follows by the invariance of $A_q$. 
We now partition 
\[\varphi^n_q(A_q)=\varphi^n_q(A_q\cap W_0)\cup \varphi^n_q(A_q\cap W_1).\]
Since it follows from Step 1 that for all $n\in\mathbb{N}$ and $i\in\lbrace 0,1\rbrace$ the sets $\varphi^n_q(A_q\cap W_i)$ contain at least one element, we can combine these observations to give the bound
\[
\mathbb{P}(\# A_q\geq 3)\leq \mathbb{P}(\# \varphi^n_q(A_q\cap W_0)\geq 2) + \mathbb{P}(\# \varphi^n_q(A_q\cap W_1)\geq 2).
\]
By taking the limit $n\rightarrow \infty$, Proposition~\ref{Prop:randomConvProb} implies that the right-hand side tends to $0$ by taking $K^i_q=A_q\cap W_i$. The result follows.
\end{proof}

\begin{cor}
    \label{COR:limit Exists}
        For each $x\in \mathbb{N}_0$
        \begin{equation}
    \label{eq:pullbacklimit}
     a_x(q):=\lim_{n\rightarrow\infty}\varphi^{2n}_{\theta^{-2n}q}(x)
    \end{equation}
    exists $\mathbb{P}$-a.s.. Furthermore, for $x,y\in W_i$, $i=0,1$, we have that $a_x=a_y$ $\mathbb{P}$-a.s..
    Conversely, if $x\in W_0$ and $y \in W_1$, then $a_x\neq a_y$ $\mathbb{P}$-a.s..
\end{cor}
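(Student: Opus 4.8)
The plan is to identify the pullback limit $a_x(q)$ with the unique point of the two-point attractor $A_q$ lying in the parity class of $x$, and then to read off both ``furthermore'' assertions directly from Proposition~\ref{PROP:attractor2points}. The heart of the matter is upgrading the $\limsup$ computation from the proof of that proposition to a genuine limit, and the decisive structural input is that $A_q$ has exactly one point in each $W_i$.

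First I would fix $x\in W_i$ and set $b_n:=\varphi^{2n}_{\theta^{-2n}q}(x)$. The starting observation is a parity argument: each application of the transition map $f_q$ changes the state by $\nu_1=+1$ or $\nu_2=-1$, so $b_n\equiv x \pmod 2$ for every $n$, i.e.\ $b_n\in W_i$ for all $n$. Since $W_i$ is closed in the discrete space $\mathbb{N}_0$, every finite subsequential limit of $(b_n)$ again lies in $W_i$.

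Next I would rerun the $\limsup$/$\liminf$ argument already employed in the proof of Proposition~\ref{PROP:attractor2points}. Writing $a_x^+(q)=\limsup_n b_n$ and $a_x^-(q)=\liminf_n b_n$, the weak pullback attraction of $A$ (recall that weak forward and weak pullback attraction coincide, since $\theta$ is measure-preserving) together with the extraction of an almost surely convergent subsequence and the discreteness of $\mathbb{X}$ gives $a_x^+(q)\in A_q$ almost surely; hence $a_x^+(q)<\infty$ and $(b_n)$ is bounded, so $a_x^-(q)$ is itself a finite subsequential limit and the same argument yields $a_x^-(q)\in A_q$. Combined with the parity observation this forces $a_x^{\pm}(q)\in A_q\cap W_i$. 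By Proposition~\ref{PROP:attractor2points} the set $A_q\cap W_i$ is a singleton almost surely, so $a_x^-(q)=a_x^+(q)$ and the limit $a_x(q)$ in \eqref{eq:pullbacklimit} exists; moreover $a_x(q)$ equals the unique element of $A_q\cap W_i$. With this identification the remaining claims are immediate: if $x,y\in W_i$ then both $a_x(q)$ and $a_y(q)$ equal the unique point of $A_q\cap W_i$, so $a_x=a_y$ almost surely; if instead $x\in W_0$ and $y\in W_1$, then $a_x(q)\in W_0$ and $a_y(q)\in W_1$ are the two distinct points of $A_q$, so $a_x\neq a_y$ almost surely.

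The only genuinely delicate step is the middle one, namely promoting the $\limsup$ membership of Proposition~\ref{PROP:attractor2points} to the statement that both $\limsup$ and $\liminf$ land in $A_q$; this relies on converting the weak (in-probability) attraction into an almost-sure statement along a suitable subsequence, exactly as in the cited proof. Everything else is bookkeeping with the parity decomposition $\mathbb{X}=W_0\cup W_1$ and the cardinality of $A_q$.
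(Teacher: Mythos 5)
Your proposal is correct and follows essentially the same route as the paper: the paper likewise introduces $a_x^-(q):=\liminf_n \varphi^{2n}_{\theta^{-2n}q}(x)$, shows it lies in $A_q$ by the same subsequence-extraction argument used for $a_x^+$ in Step~1 of Proposition~\ref{PROP:attractor2points}, and then invokes the parity preservation of the two-step dynamics together with $\#(A_q\cap W_i)=1$ to force $a_x^-=a_x^+$ and to read off both ``furthermore'' claims. Your write-up merely makes the parity bookkeeping and the finiteness of the $\liminf$ more explicit, which is fine.
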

\begin{proof}
    For each $x\in\mathbb{N}_0$ let
    \[
        a_x^-(q):=\liminf_{n\rightarrow\infty} \varphi^{2n}_{\theta^{-2n}q}(x).
    \]
Analogously to Step 1 in the proof of Proposition~\ref{PROP:attractor2points}, we obtain $a_x^-(q)\in A_q$ for almost all $q \in \mathcal Q$. Since it has the same parity as $a_x^+(q)$ and $\#A_q\cap W_i=1$, $i=0,1$, we conclude that $a_x^+=a_x^-$ holds with full probability and, hence, the limit $a_x \in A_q$ exists almost surely.

 Using again that $\#A_q\cap W_i=1$, $i=0,1$, we derive that $a_x$ and $a_y$ are identical (or different, respectively) in a full measure set when $x$ and $y$ are of the same parity (or of different parities, respectively).
\end{proof}


\subsubsection{Strong pullback attraction to a random period orbit}\label{sec:strong_pullback}

We can now give a full characterization of the pullback attractor in the next theorem. We already know from Proposition~\ref{PROP:attractor2points} and Corollary~\ref{COR:limit Exists} that the weak attractor $A_q$ consists almost surely of the two distinct random points $a_0(q)$ and $a_1(q)$, as given in~\eqref{eq:pullbacklimit}. Now, we identify the strong pullback structure of this attractor.

\begin{Thm}
    \label{THM:strongAttractor}
        The weak attractor $A_q = \{a_0(q), a_1(q)\}$ for the birth-death chain
        \begin{itemize}
            \item[(i)]  is a random periodic orbit of period 2,
            \item[(ii)] is a pullback and a forward attractor, and
            \item[(iii)] satisfies $\left| a_0(q) - a_1(q) \right| =1$ for $\mathbb{P}$-almost all $q \in \mathcal Q$.
        \end{itemize}

\end{Thm}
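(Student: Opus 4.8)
The plan is to prove the three assertions in order, bootstrapping each from the previous together with the already-established facts that $A_q=\{a_0(q),a_1(q)\}$ with $a_0(q)\in W_0$ and $a_1(q)\in W_1$ (Proposition~\ref{PROP:attractor2points}) and that the even pullback limits $a_x(q)=\lim_{n\to\infty}\varphi^{2n}_{\theta^{-2n}q}(x)$ exist and depend only on the parity of $x$ (Corollary~\ref{COR:limit Exists}). The structural input used throughout is that $f_q$ switches parity, so $\varphi^1_q(W_0)\subset W_1$ and $\varphi^1_q(W_1)\subset W_0$. For (i) I would argue directly from $\varphi$-invariance: on a full-measure set $\varphi^1_q(A_q)=A_{\theta q}$, and by Proposition~\ref{PROP:attractor2points} both $A_q$ and $A_{\theta q}$ contain exactly one even and one odd point, with $A_{\theta q}\cap W_0=\{a_0(\theta q)\}$ and $A_{\theta q}\cap W_1=\{a_1(\theta q)\}$. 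Since $a_0(q)\in W_0$, the image $\varphi^1_q(a_0(q))$ lies in $W_1$ and belongs to $A_{\theta q}$, hence equals $a_1(\theta q)$; symmetrically $\varphi^1_q(a_1(q))=a_0(\theta q)$. This is precisely the defining relation of a random periodic orbit of period $2$ in Definition~\ref{DEF:randomCycle}.

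For (ii) the forward statement is immediate: $A$ is an invariant finite random set and a weak attractor, so Theorem~\ref{THM:weakEquiv} (I $\Rightarrow$ II) yields that it is a forward attractor. The pullback statement requires upgrading the even-time convergence of Corollary~\ref{COR:limit Exists} to all times. Fix $x$ of parity $i$; by \eqref{eq:limitIFFlanding} it suffices to show $\varphi^n_{\theta^{-n}q}(x)\in A_q$ for all large $n$. Along even times this is Corollary~\ref{COR:limit Exists}, giving $\varphi^{2m}_{\theta^{-2m}q}(x)=a_i(q)$ for large $m$ (eventual equality in the discrete space). Along odd times I would split off one step via the cocycle property,
\[
\varphi^{2m+1}_{\theta^{-(2m+1)}q}(x)=\varphi^1_{\theta^{-1}q}\Big(\varphi^{2m}_{\theta^{-2m}(\theta^{-1}q)}(x)\Big),
\]
apply Corollary~\ref{COR:limit Exists} with the noise $\theta^{-1}q$ (valid a.s.\ by $\theta$-invariance) so that the inner term equals $a_i(\theta^{-1}q)$ for large $m$, and then invoke part (i) at $\theta^{-1}q$ to conclude $\varphi^{2m+1}_{\theta^{-(2m+1)}q}(x)=a_{i+1\,(\mathrm{mod}\,2)}(q)\in A_q$. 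Thus both the even and odd pullback tails lie in $A_q$; taking the maximum of the finitely many thresholds over $x\in B$ gives strong pullback convergence for every finite $B$.

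For (iii) the two points already have opposite parity, so $|a_0(q)-a_1(q)|$ is a positive odd integer and it remains to bound it by $1$. I would use the forward invariance of the thick diagonal $\mathbb{D}$ established through \eqref{transition_prob_1}--\eqref{transition_prob_4}. The pair $(0,1)\in I_{-1}\subset\mathbb{D}$, and for every $n$ the pullback iterate $\big(\varphi^{2n}_{\theta^{-2n}q}(0),\varphi^{2n}_{\theta^{-2n}q}(1)\big)$ is exactly the forward two-point motion of $(0,1)$ under the noise $\theta^{-2n}q$, hence stays in the forward-invariant set $\mathbb{D}$. Letting $n\to\infty$ and using that these iterates are eventually constant and equal to $(a_0(q),a_1(q))$, we obtain $(a_0(q),a_1(q))\in\mathbb{D}$, i.e.\ $|a_0(q)-a_1(q)|\le 1$, whence equality.

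I expect the main obstacle to be the odd-time pullback bookkeeping in (ii): one must track the noise index carefully through the cocycle splitting and ensure that the a.s.\ statements of Corollary~\ref{COR:limit Exists} and of part (i) are applied at the shifted sample point $\theta^{-1}q$, working on a common full-measure set obtained by intersecting countably many $\theta$-translates. The remaining ingredients---parity switching, $\varphi$-invariance, and forward invariance of $\mathbb{D}$---are then combined in a routine way.
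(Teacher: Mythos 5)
Your proposal is correct and follows essentially the same route as the paper: part (ii) is the paper's argument verbatim (even/odd pullback times split via the cocycle property at $\theta^{-1}q$, plus Theorem~\ref{THM:weakEquiv} for forward attraction), while your parts (i) and (iii) are mild streamlinings of the same ideas -- the paper derives \eqref{eq:randomcycle} by computing the pullback limit of $\varphi^{2n+1}_{\theta^{-2n}q}(i)$ directly rather than quoting invariance plus parity, and for (iii) it passes through a probabilistic estimate ($\Omega_n$ and $\theta$-invariance of $\mathbb{P}$, invoking Proposition~\ref{PROP:synchronisation}) where you simply note that the pullback iterates of $(0,1)$ are forward two-point iterates starting in the forward-invariant set $\mathbb{D}$. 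Both variants rest on exactly the same ingredients (Proposition~\ref{PROP:attractor2points}, Corollary~\ref{COR:limit Exists}, parity switching, and the thick diagonal), so no new idea is introduced and no step is missing.
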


\begin{proof}
At first, we use the cocycle property in order to show item (i), that is
\begin{equation}
        \label{eq:randomcycle}
            \varphi_{q}^1(a_0(q))=a_1(\theta q) \qquad \text{and} \qquad \varphi_{q}^1(a_1(q))=a_0(\theta q)
        \end{equation}
is satisfied $\mathbb{P}$-a.s. Indeed, let $i=0,1$, $q \in \mathcal{Q}$ be fixed. Then,
\begin{equation*}
        \varphi_{q}^1(a_i(q))=\lim_{n\rightarrow\infty}\varphi^{2n+1}_{\theta^{-2n}q}(i)=\lim_{n\rightarrow\infty}\varphi^{2n}_{\theta^{-2n}\circ\theta q}\left( \varphi_{q}^1(i) \right)=a_{i+1 (\bmod 2)}(\theta q), 
\end{equation*}
where the last equality follows directly from Corollary~\ref{COR:limit Exists} and the fact that $\varphi_{q}^1(i)\in W_{i+1 (\bmod 2)}$.

In order to show (ii), let $q \in \mathcal{Q}$. It follows from the definition \eqref{eq:pullbacklimit} that, for any $x\in W_i$ with $i=0,1$, there exists $N_0= N_0(x)\in\mathbb{N}_0$ such that for all $n\geq N_0$ we have $\varphi_{\theta^{-2n}q}^{2n}(x)=a_i(q)\in A(q)$. On the other hand, by the cocycle property and \eqref{eq:randomcycle} it follows that
\begin{align*}
\lim_{n\rightarrow\infty}\varphi_{\theta^{-2n-1}}^{2n+1}(x)&=\lim_{n\rightarrow\infty}
\varphi^1_{\theta^{-1}q}\left( \varphi^{2n}_{\theta^{-2n}\theta^{-1}q}(x) \right) \\
&=
\varphi^1_{\theta^{-1}q}(a_{i}(\theta^{-1}q))=a_{i+1(\bmod 2)}(q).
\end{align*}
Hence, there is $N_1= N_1(x)\in\mathbb{N}$ such that for all $n\geq N_1$ we have that $\varphi_{\theta^{-2n-1}q}^{2n+1}(x)\in A(q)$. Combining both parts, we obtain $\varphi_{\theta^{-n}q}^{n}(x)\in A(q)$ for all $n\geq N(x):=\max \lbrace 2N_0(x), 2N_1(x)+1\rbrace$. 
Recall that point pullback attractors are (set) pullback attractors due to the state space being discrete, and thus $A$ is a pullback attractor. $A$ is also a forward attractor due to Theorem~\ref{THM:weakEquiv}.

Last, we prove (iii). Consider the sets
\[
    \Omega_n:=\left\lbrace q \in\mathcal{Q}: \varphi_{\theta^{-2k}q}^{2k}(0)=a_0(q), \ \varphi_{\theta^{-2k}q}^{2k}(1)=a_1(q) \ \forall k\geq n \right\rbrace.
\]
Since $\Omega_n\subset \Omega_{n+1}$ for all $n\in\mathbb{N}$, and since $\mathbb{P}(\bigcup_{n\in\mathbb{N}} \Omega_n)=1$, for each $\varepsilon>0$ there is $N$ such that for all $k\geq N$ sufficiently large
\begin{align*}
   \mathbb{P}\left(\left| a_0(q) - a_1(q) \right| \geq 2 \right) &\leq  \mathbb{P}\left(\lbrace q\in\mathcal{Q}: \left| a_0(q) - a_1(q) \right| \geq 2 \rbrace \ \cap \Omega_N \right) + \frac{\varepsilon}{2} \\
   &= \mathbb{P}\left( \left| \varphi_{\theta^{-2k}q}^{2k}(0) - \varphi_{\theta^{-2k}q}^{2k}(1) \right| \geq 2  \right) +\frac{\varepsilon}{2} \\
    &= \mathbb{P}\left( \left| \varphi_{q}^{2k}(0) - \varphi_{q}^{2k}(1) \right| \geq 2 
   \right) +\frac{\varepsilon}{2}  < \varepsilon,
\end{align*}
due to Proposition~\ref{PROP:synchronisation}. Since $\varepsilon$ was arbitrarily small, the result follows.
\end{proof}

\subsubsection{Sample measures supported on the attractor}

In this subsection we briefly describe the statistical importance of the attractor $A$ in terms of the invariant measure for the skew-product map $\Theta:\mathcal{Q}\times \mathbb{X}\rightarrow \mathcal{Q}\times \mathbb{X}$ given by
\[
    \Theta(q,x):=(\theta q, \varphi_q(x)).
\]
Denoting by $ T^*\mu$ the push forward of a measure $\mu$ by a map $T$, i.e. $T^* \mu(\cdot) = \mu (T^{-1}(\cdot))$, we adopt the classical definition of an invariant measure for the RDS (see e.g. \cite[Definition 1.4.1]{arnold1998}):
A probability measure $\mu$ on $\mathcal Q \times \mathbb{X}$ is invariant for the random dynamical system $(\theta, \varphi)$ if
\begin{enumerate}
\item[(i)] $\Theta_t^* \mu = \mu$ for all $ t \in \mathbb \mathbb{N}_0$\,, 
\item[(ii)] the marginal of $\mu$ on $\mathcal Q$ is $\mathbb{P}$, i.e.~$\mu$ can be factorized uniquely into 
$$\mu(\rmd q,  x) = \mu_{q}( x) \mathbb{P}(\rmd q),$$
where $q\mapsto \mu_{q}$ is the \textit{sample measure}  (or \textit{disintegration}) on $\mathbb{X}$, i.e., $\mu_{q}$ is almost surely a probability measure on $\mathbb{X}$  and $q \mapsto \mu_{q}(B)$ is measurable for all $B \subset \mathbb{X}$.
\end{enumerate}
In particular note that, since $\mathbb P$ is given, the sample measures $\mu_q$ completely determine such an invariant measure.
A specific form of such invariant measures are \emph{Markov measures}, characterized by the sample measures being measurable with respect to the \emph{past}: in our setting, this means that the $\mu_q$ only depend on $q_n, n < 0$ (cf.~e.g.~\cite{LedrappierYoung1988} or \cite{KuksinShirikyan2012}).

The theory of random dynamical systems now gives us the following result on the unique invariant measure for the RDS at hand, relating it to the stationary distribution of the Markov chain:
\begin{prop} \label{prop:samplemeasures}
 The RDS of the birth-death chain possesses a unique invariant Markov measure with sample measures
 $$\mu_{q} = \frac{1}{2} \delta_{a_0(q)} + \frac{1}{2} \delta_{a_1(q)},$$
 such that $\mathbb E[\mu_{q}] = \rho$, where $\rho$ is the unique stationary distribution from Proposition~\ref{PROP:stationary distribution}.
\end{prop}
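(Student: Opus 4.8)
The plan is to combine the abstract correspondence between stationary distributions of the embedded Markov chain and invariant Markov measures of the RDS with the explicit pullback description of the attractor from Corollary~\ref{COR:limit Exists} and Theorem~\ref{THM:strongAttractor}. Disintegrating the invariance $\Theta^*\mu=\mu$ of the skew product, a family $(\mu_q)_q$ of probability measures on $\mathbb{X}$ defines an invariant measure precisely when the push-forward relation $(\varphi^1_q)^*\mu_q=\mu_{\theta q}$ holds $\mathbb{P}$-a.s. For a Markov measure, whose sample measures depend only on the coordinates $q_n$ with $n<0$, the map $\varphi^1_q$ (which depends only on $q_0$) is independent of $\mu_q$; integrating the invariance relation against $\mathbb{P}$ and using that $\mathbb{P}$ is shift-invariant then shows that the state-space marginal $\mathbb{E}[\mu_q]$ is a stationary distribution of the embedded chain. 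Hence, once existence and uniqueness of the invariant Markov measure are established, the identity $\mathbb{E}[\mu_q]=\rho$ is forced by the uniqueness of $\rho$ from Proposition~\ref{PROP:stationary distribution}.

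For existence I would verify directly that the candidate $\mu_q:=\tfrac12\delta_{a_0(q)}+\tfrac12\delta_{a_1(q)}$ is an invariant Markov measure. The Markov property is immediate, since by \eqref{eq:pullbacklimit} the points $a_0(q),a_1(q)$ are pullback limits depending only on $\{q_n:n<0\}$, so that $q\mapsto\mu_q$ is measurable with respect to the past. Invariance follows from the random periodic orbit structure of Theorem~\ref{THM:strongAttractor}(i): pushing forward yields
\[
 (\varphi^1_q)^*\mu_q=\tfrac12\delta_{\varphi^1_q(a_0(q))}+\tfrac12\delta_{\varphi^1_q(a_1(q))}=\tfrac12\delta_{a_1(\theta q)}+\tfrac12\delta_{a_0(\theta q)}=\mu_{\theta q}.
\]
Equivalently, one may exhibit $\mu_q$ as the pullback limit of $\rho$ along even times, $\lim_{n\to\infty}(\varphi^{2n}_{\theta^{-2n}q})^*\rho$: splitting $\rho=\rho|_{W_0}+\rho|_{W_1}$ and invoking Corollary~\ref{COR:limit Exists}, the mass started in $W_i$ collapses onto the single point $a_i(q)$, so the limit equals $\rho(W_0)\,\delta_{a_0(q)}+\rho(W_1)\,\delta_{a_1(q)}$.

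It remains to identify the weights as $\rho(W_0)=\rho(W_1)=\tfrac12$. This is a consequence of periodicity: at the level of cyclic classes the chain moves deterministically $W_i\to W_{i+1\,(\mathrm{mod}\,2)}$, so stationarity gives $\rho(W_{i+1})=\sum_{x\in W_i}\rho(x)\,\mathbb{P}(\varphi^1_q(x)\in W_{i+1})=\rho(W_i)$, and since the two masses sum to $1$ they are both $\tfrac12$. An alternative route, which also yields uniqueness among measures carried by the attractor, is purely dynamical: writing any such invariant measure as $\mu_q=w_0(q)\delta_{a_0(q)}+w_1(q)\delta_{a_1(q)}$ and matching coefficients in the invariance relation (the two atoms being distinct by Theorem~\ref{THM:strongAttractor}(iii)) forces $w_0\circ\theta=w_1$ and hence $w_0=w_0\circ\theta^2$; as $\theta$ is the Bernoulli shift, $\theta^2$ is ergodic, so $w_0$ is $\mathbb{P}$-a.s.\ constant and $w_0+w_1=1$ gives $w_0=w_1=\tfrac12$.

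Finally, uniqueness of the invariant Markov measure follows from the general one-to-one correspondence between invariant Markov measures of the RDS and stationary distributions of the associated one-point Markov chain (cf.~\cite{arnold1998,Kifer86}): any invariant Markov measure, being a past-measurable pullback object, is supported on the pullback attractor $\{a_0(q),a_1(q)\}$, and the weight computation above then pins it down uniquely, while uniqueness of $\rho$ rules out any other candidate. I expect the main obstacle to be the rigorous justification of this support statement in the present countable but non-compact state space: one must guarantee that the pullback limit defining the Markov measure genuinely exists, is independent of the initial distribution, and concentrates on the two attractor points. This is precisely where positive recurrence (Proposition~\ref{PROP:stationary distribution}) together with the almost-sure convergence in Corollary~\ref{COR:limit Exists} and Theorem~\ref{THM:strongAttractor} take over the role played by compactness in the abstract theorems.
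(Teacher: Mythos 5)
Your proposal is correct and proves the same statement, but by a genuinely more hands-on route than the paper, and the comparison is instructive. The paper's proof is citation-driven: it takes the existence of a Markov measure supported on the attractor from \cite[Proposition 4.5]{CrauelFlandoli1994}, gets uniqueness and $\mathbb E[\mu_q]=\rho$ from the Ledrappier--LeJan--Crauel correspondence, and then invokes LeJan's classification \cite{LeJan1987} to reduce to the dichotomy ``uniform on the two points or a single random Dirac'', excluding the latter via the period-2 relation \eqref{eq:randomcycle}. You instead exhibit the candidate $\mu_q=\tfrac12\delta_{a_0(q)}+\tfrac12\delta_{a_1(q)}$ and verify everything directly: past-measurability from the pullback construction \eqref{eq:pullbacklimit}, invariance in one line from Theorem~\ref{THM:strongAttractor}(i), and the weights either from the cyclic-class identity $\rho(W_0)=\rho(W_1)=\tfrac12$ or from your coefficient-matching argument $w_0=w_0\circ\theta^2$ together with ergodicity of $\theta^2$. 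This bypasses LeJan's lemma entirely and makes the value $\tfrac12$ conceptually transparent (it is forced by the period-2 cyclic decomposition), which is a real gain in this discrete setting. The one place where you still lean on the same machinery as the paper is uniqueness: your first paragraph correctly proves the easy half of the correspondence (independence of $\varphi^1_q$ from the past gives that $\mathbb E[\mu_q]$ is stationary), but the injectivity of $\mu\mapsto\mathbb E[\mu_q]$ on Markov measures -- equivalently, your claim that \emph{every} invariant Markov measure is carried by the pullback attractor -- is exactly the nontrivial half, and you rightly flag it as the remaining obstacle; note that \cite[Proposition 4.5]{CrauelFlandoli1994} only yields \emph{one} Markov measure on the attractor, not all of them. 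So either cite the correspondence theorem for uniqueness, as the paper does via \cite[Proposition 1.2.3]{LedrappierYoung1988}, or prove the pullback representation of Markov measures in this positive-recurrent countable setting; with that supplied, your argument is complete and arguably more self-contained for the explicit form of $\mu_q$.
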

\begin{proof}
By \cite[Proposition 4.5]{CrauelFlandoli1994}, we know that there exists a Markov measure $\mu$ such that $\mu_q(A(q)) =1$ almost surely, where $A(q)=\lbrace a_0(q),a_1(q)\rbrace$ is the attractor from Theorem~\ref{THM:strongAttractor}.
Its uniqueness and the fact that $\mathbb E[\mu_{q}] = \rho$ follow from the celebrated correspondence theorem, also called \emph{Ledrappier-LeJan-Crauel Theorem} (see \cite[Proposition 1.2.3]{LedrappierYoung1988} for a version that suffices for our situation and 
\cite[Theorem 4.2.9]{KuksinShirikyan2012} for the more general situation).

Using \cite{LeJan1987} (see also \cite[Lemma 2.19]{flandoli2016}) we can directly infer that either $\mu_{q} = \frac{1}{2} \delta_{a_0(q)} + \frac{1}{2} \delta_{a_1(q)}$ almost surely or $\mu_{q} = \delta_{a_i(q)}$ almost surely for $i=0$ or $i=1$ fixed. The latter case can now be excluded by combining \eqref{eq:randomcycle} and the invariance property $(\varphi_q^n)^* \mu_{q} = \mu_{\theta^n q}$ \cite[Proposition 1.3.27]{KuksinShirikyan2012}.
\end{proof}


\subsection{Attractor structure for the Schlögl model} \label{sec:Schloegl}

For the extended reaction network of Example~\ref{ex:schloegl}, we show the existence of a weak attractor for the corresponding embedded Markov chain via its possession of a unique stationary distribution. This is formulated in the following proposition. As for the structure of the weak attractor, we provide a  conjecture based on numerical experiments whose proof is left as an open problem for future work.


\begin{prop}
\label{PROP:stationary distribution Schloegl}
    The embedded Markov chain of the Schl\"ogl model admits a unique stationary distribution. Therefore, the associated RDS admits a (unique) weak attractor.
\end{prop}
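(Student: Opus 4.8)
The plan is to follow exactly the route used for the birth-death case in Proposition~\ref{PROP:stationary distribution}: first establish the existence of a unique stationary distribution for the embedded chain by verifying the Durrett criterion \eqref{eq:condition_stationary_1}, and then deduce existence and uniqueness of the weak attractor from Theorem~\ref{THM:existenceWeakAttractor}. Since the Schl\"ogl embedded chain is again a nearest-neighbour chain on $\mathbb{N}_0$ with transition probabilities \eqref{eq:Schloegl_P}, the criterion from \cite[p.~304]{durrett2019probability} applies verbatim, so it suffices to show that $\zeta=\sum_{x=1}^{\infty}\prod_{j=0}^{x-1}\frac{P_1(j)}{P_{-1}(j+1)}<\infty$, where $\mu(x)=\rate_1+\rate_2 x+\rate_3 x(x-1)+\rate_4 x(x-1)(x-2)$ denotes the total propensity appearing in \eqref{eq:Schloegl_P}.

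The core of the argument is a single asymptotic estimate. Setting $b_x:=\prod_{j=0}^{x-1}\frac{P_1(j)}{P_{-1}(j+1)}$, the consecutive ratio is $\frac{b_{x+1}}{b_x}=\frac{P_1(x)}{P_{-1}(x+1)}$, which by \eqref{eq:Schloegl_P} equals
\[
    \frac{P_1(x)}{P_{-1}(x+1)}=\frac{\big(\rate_1+\rate_3 x(x-1)\big)\,\mu(x+1)}{\big(\rate_2(x+1)+\rate_4(x+1)x(x-1)\big)\,\mu(x)}.
\]
A degree count in $x$ shows the numerator grows like $\rate_3\rate_4\,x^{5}$ (degree $2$ from the birth propensity times degree $3$ from $\mu(x+1)$) while the denominator grows like $\rate_4^2\,x^{6}$ (degree $3$ from the death propensity times degree $3$ from $\mu(x)$), so that $\frac{P_1(x)}{P_{-1}(x+1)}\sim\frac{\rate_3}{\rate_4\,x}\to 0$ as $x\to\infty$. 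By the ratio test this immediately yields $\zeta<\infty$; note that, in contrast to the birth-death case, the cubic death propensity forces the ratio to decay, so no Stirling estimate is required. Crucially the conclusion holds for \emph{every} choice of positive rate constants, since the leading terms are governed by $\rate_3,\rate_4>0$; the bistability of the Schl\"ogl model is therefore reflected not in non-uniqueness but in the (bimodal) shape of the unique stationary distribution.

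With $\zeta<\infty$ the Durrett criterion delivers a unique stationary distribution $\rho$. The chain is irreducible on $\mathbb{N}_0$ because $\rate_1>0$ keeps the up-transition probability $P_1(x)>0$ for every $x$ and $\rate_2>0$ keeps $P_{-1}(x)>0$ for every $x\geq 1$; the existence of a summable stationary distribution then forces positive recurrence, and in particular recurrence. Finally, since the Schl\"ogl network has finitely many reactions we have $\varphi^1_{\mathcal{Q}}(x)\subseteq\{x-1,x+1\}$ finite for each $x$, so all hypotheses of Theorem~\ref{THM:existenceWeakAttractor} are satisfied and the associated RDS admits a weak attractor; its uniqueness is the general uniqueness of weak attractors recorded earlier (\cite[Lemma 1.3]{flandoli2016}). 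The proof is essentially a transcription of Proposition~\ref{PROP:stationary distribution}, and the only genuinely new work is the bookkeeping of the $\mu(x)$ and $\mu(x+1)$ factors; I expect no serious obstacle, the one mild technical point being to confirm that the cubic terms in numerator and denominator cancel to leave the desired $O(1/x)$ decay.
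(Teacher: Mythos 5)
Your proposal is correct and follows the same overall skeleton as the paper: verify the Durrett summability criterion \eqref{eq:condition_stationary_1} for the nearest-neighbour chain with transition probabilities \eqref{eq:Schloegl_P}, then invoke Theorem~\ref{THM:existenceWeakAttractor}. The one place where you genuinely diverge is the convergence estimate itself. The paper factors the product as $\frac{1}{x!}\prod_{j=0}^{x-1}\bigl(\frac{\rate_1+\rate_3 j(j-1)}{\rate_2+\rate_4 j(j-1)}\bigr)\cdot\bigl(\frac{\mu(j+1)}{\mu(j)}\bigr)$, observes that both factors inside the product are bounded, and bounds $\zeta$ by $\sum_x C^x/x!$, whose finiteness is then checked via the Stirling argument recycled from Proposition~\ref{PROP:stationary distribution}. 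You instead compute the consecutive-term ratio $P_1(x)/P_{-1}(x+1)\sim \rate_3/(\rate_4 x)$ directly and apply the ratio test; your degree count (quintic over sextic, with the cubic $\mu$-factors asymptotically cancelling) is correct. Your route is more elementary — it bypasses Stirling entirely — and it makes transparent that the decay is driven by the cubic death propensity $\rate_4$, i.e.\ that the conclusion holds for all positive rate constants regardless of bistability. You are also more explicit than the paper in checking the remaining hypotheses of Theorem~\ref{THM:existenceWeakAttractor} (irreducibility from $\rate_1,\rate_2>0$, positive recurrence from the stationary distribution, finiteness of $\varphi^1_{\mathcal{Q}}(x)$), which the paper leaves implicit under ``analogously.'' No gaps.
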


\begin{proof}
\noindent    Analogously to Proposition~\ref{PROP:stationary distribution}, we show that the quantity $\zeta$ in \eqref{eq:condition_stationary_1} is finite. Indeed,
    \[
        \prod_{j=0}^{x-1}\frac{P_1(j)}{P_{-1}(j+1)}=
       \prod_{j=0}^{x-1}\left(\frac{\gamma_1+\gamma_3 j (j-1)}{\gamma_2(j+1)+\gamma_4 j(j+1)(j-1)}\right)\cdot
        \left( \frac{\mu(j+1)}{\mu(j)} \right)=
    \]
    \[
        =\frac{1}{x!}\prod_{j=0}^{x-1}\left(\frac{\gamma_1+\gamma_3 j (j-1)}{\gamma_2+\gamma_4 j(j-1)}\right)\cdot
        \left( \frac{\mu(j+1)}{\mu(j)} \right).
    \]
Both terms inside the product are bounded for all $j\in\mathbb{N}_0$. Thus, there exists $C>0$ such that
\[
   \zeta \leq \sum_{x=1}^{\infty}
  \frac{1}{x!}C^x<\infty.
\]
The finiteness of the right-hand side follows from Stirling's approximation similarly as in Proposition~\ref{PROP:stationary distribution}.
\end{proof}

The stationary distribution of the Schlögl model is depicted in Figure~\ref{fig:Schloegl_stat_distr}. 

Numerical explorations suggest the following structure of the weak attractor: similarly to the simple birth-death chain, there is partial synchronization with respect to a partition of $\mathbb N_0$ into odd and even numbers, as exemplified in Figure~\ref{fig:Schloegl_2dim}(a). This suggests that the weak attractor consists again of two random points that most likely form a random periodic orbit. However, as we see in Figure~\ref{fig:Schloegl_2dim}(b), the distance of these two random points will not be one but probably larger; maybe even depending on the random realization. We emphasize that this difference to the simple birth-death chain in terms of the non-synchronizing trajectories reflects the distinction between monostability (the thick diagonal in the two-point motion is absorbing) and bistability (the thick diagonal in the two-point motion has repelling parts) as seen in the large-volume limiting ODEs (cf.~Figure~\ref{fig:ODE}). This fact is also mirrored by the respective stationary distributions of the one- and two-point motion, as illustrated in Figures~\ref{fig:BDP_stat_distr} and~\ref{fig:Schloegl_stat_distr}.

A proof of the associated structure of the random attractor is much more involved than in the simple birth-death case and will be left for future work. 

\begin{figure}
    \centering
\begin{subfigure}[b]{0.49\textwidth}
        \centering
      \begin{overpic}[width=\textwidth]{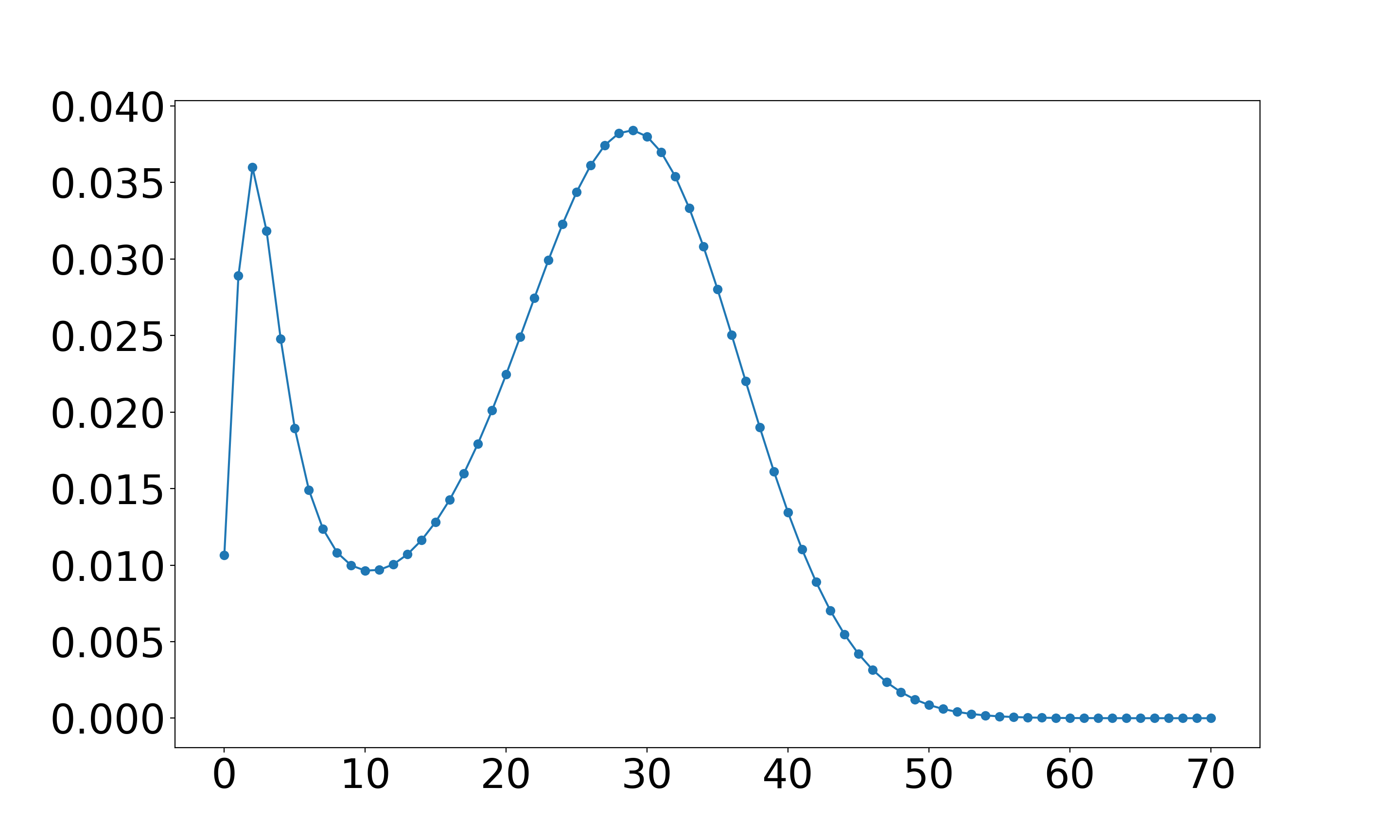}
        \put(-2,25){\scriptsize \begin{turn}{90} $\rho(x)$ \end{turn} }
        \put(50,0){\scriptsize $x$}
        \end{overpic}
        \caption{}
    \end{subfigure}
    \hfill
    \begin{subfigure}[b]{0.49\textwidth}
        \centering
      \begin{overpic}[width=\textwidth]{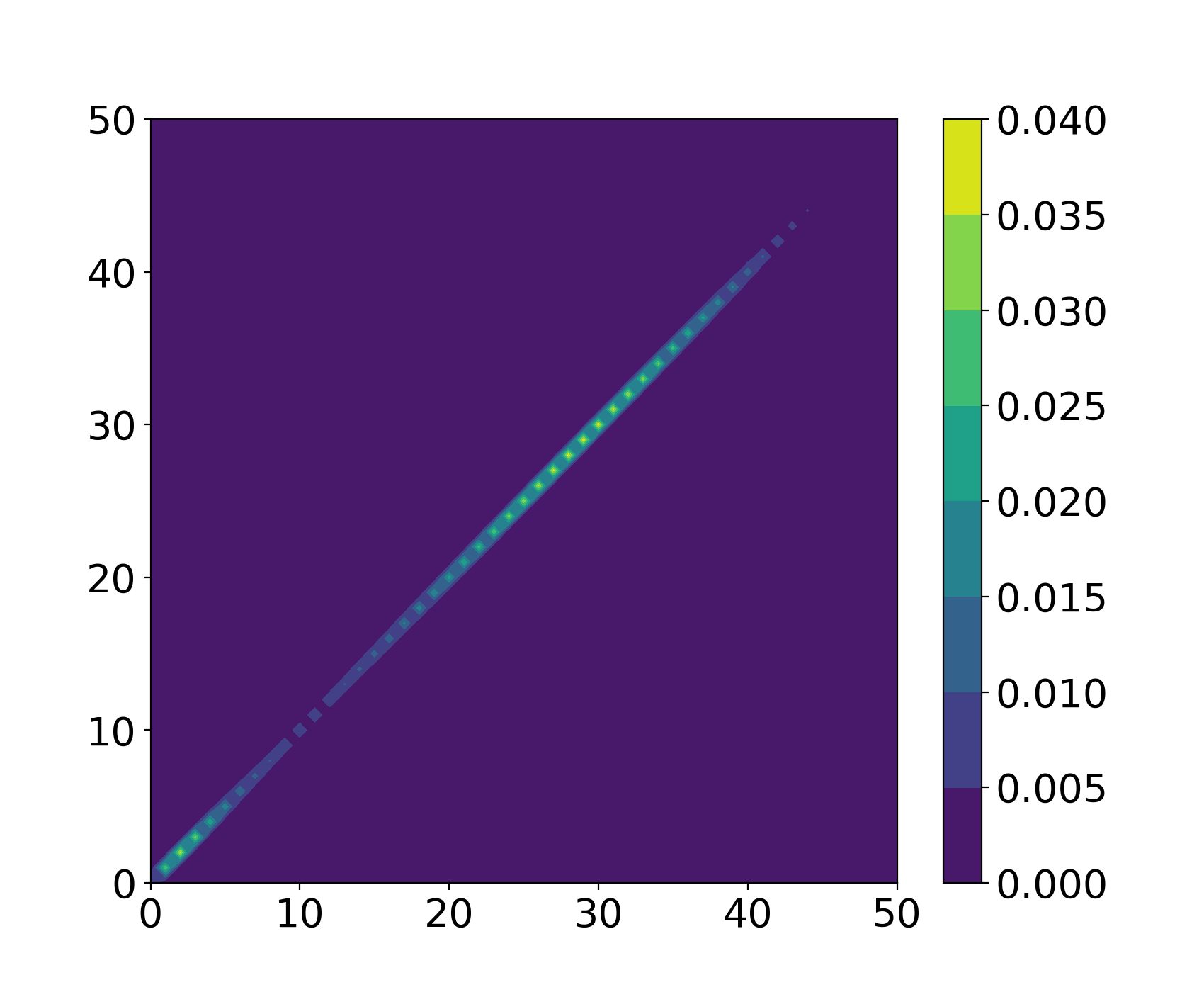}
        \put(3,40){\scriptsize \begin{turn}{90} $y$ \end{turn} }
        \put(43,2){\scriptsize $x$}
        \put(93,33){\scriptsize \begin{turn}{90} $\pi_\Delta(x,y)$ \end{turn} }
        \end{overpic}
        \caption{}
    \end{subfigure}
    \hfill\begin{subfigure}[b]{0.49\textwidth}
        \centering
        \begin{overpic}[width=\textwidth]{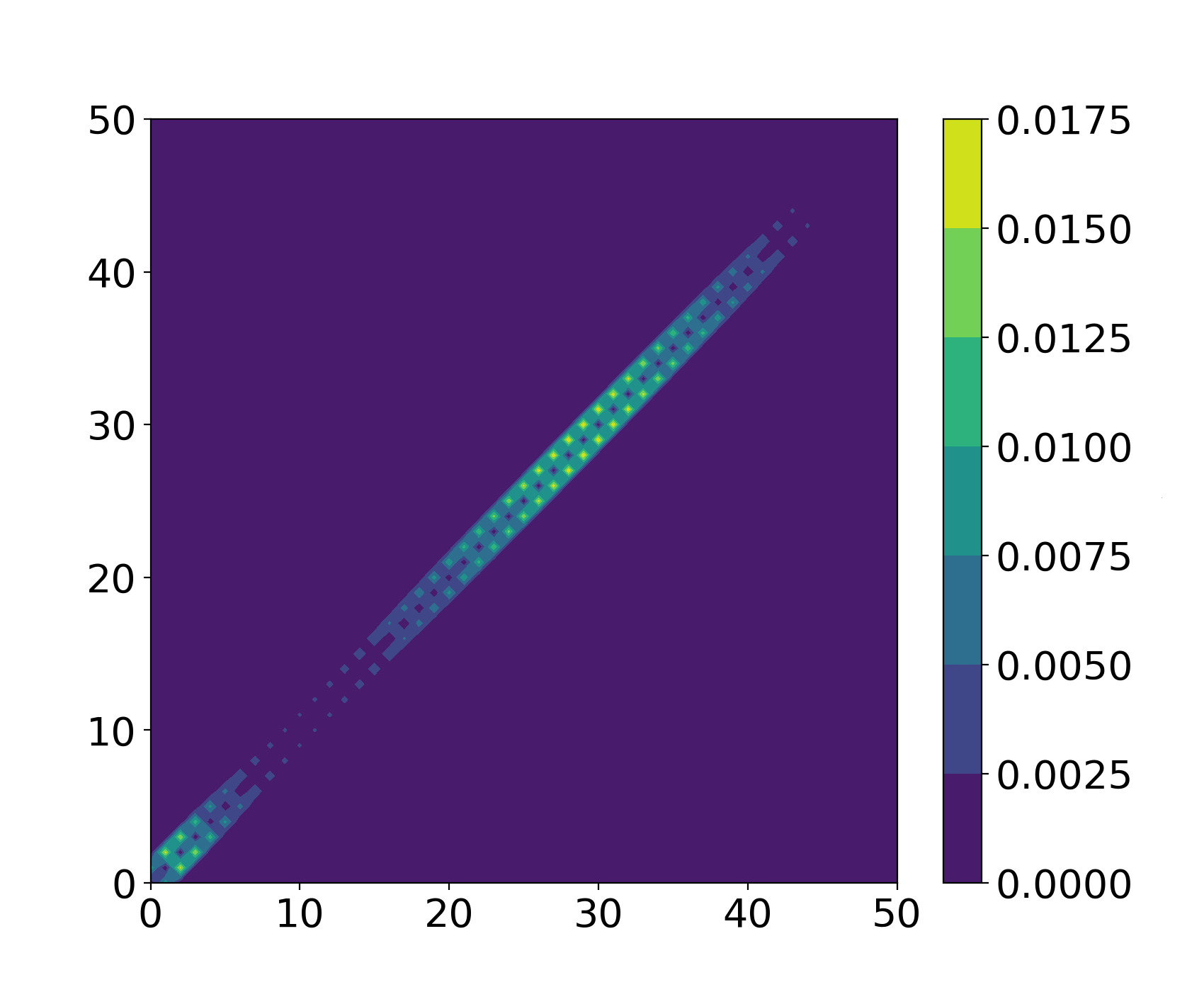}
        \put(3,40){\scriptsize \begin{turn}{90} $y$ \end{turn} }
        \put(43,2){\scriptsize $x$}
        \put(95,33){\scriptsize \begin{turn}{90} $\pi_S(x,y)$ \end{turn} }
        \end{overpic}
        \caption{}
    \end{subfigure}
    \hfill\begin{subfigure}[b]{0.49\textwidth}
        \centering
        \begin{overpic}[width=\textwidth]{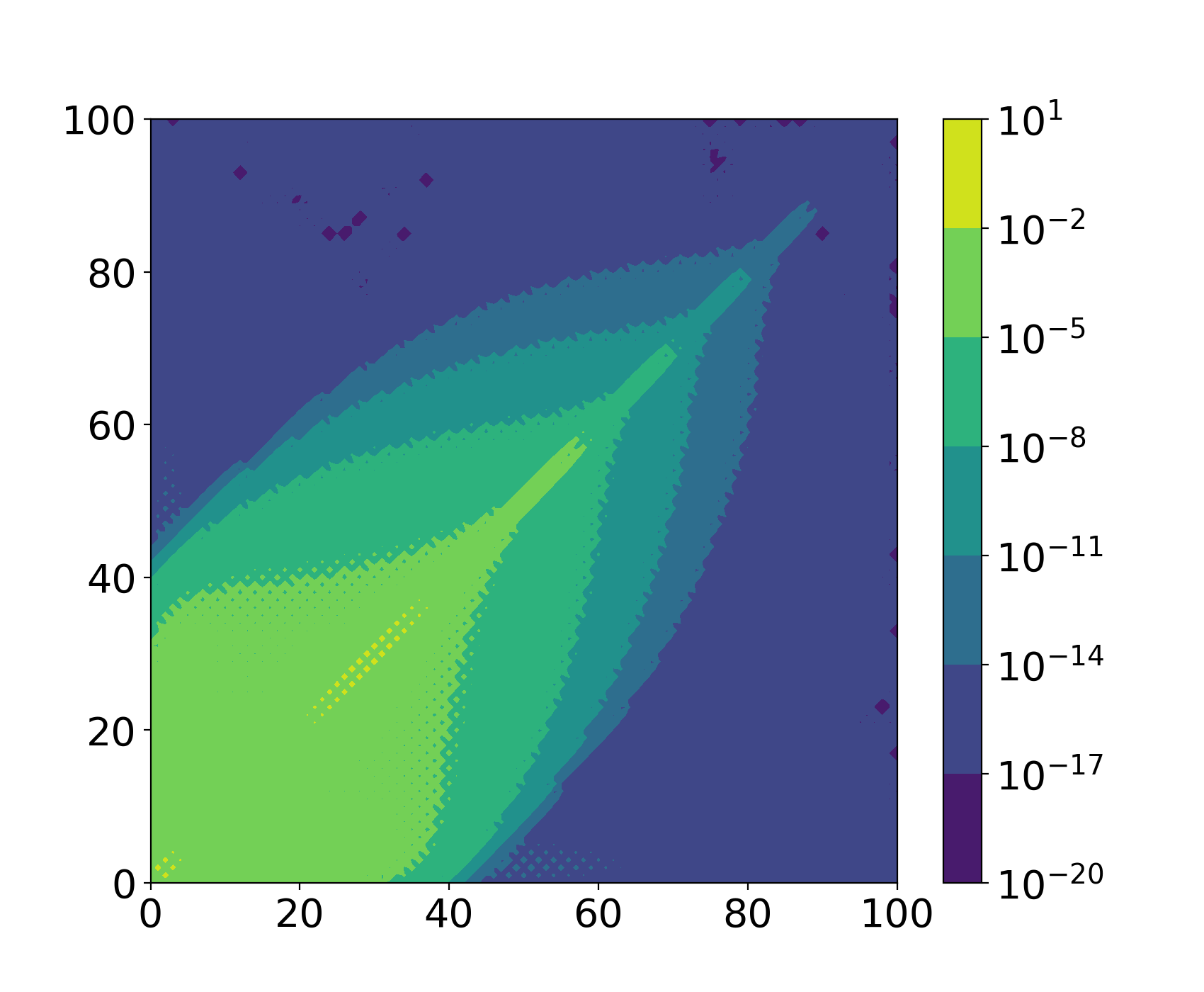}
        \put(3,40){\scriptsize \begin{turn}{90} $y$ \end{turn} }
        \put(45,2){\scriptsize $x$}
        \put(93,33){\scriptsize \begin{turn}{90} $\pi_S(x,y)$ \end{turn} }
        \end{overpic}
        \caption{}
    \end{subfigure}
    \hfill
    \caption{Stationary distributions of one- and two-point motion of the Schlögl model (embedded Markov chain). (a) Stationary distribution $\rho$ of the one-point motion and (b), (c)
   stationary distributions $\pi_{\Delta}$ and $\pi_{S}$ of the two-point motion, respectively. Here, the index $S\subset \mathbb{X}^2 \setminus \Delta$ refers to the support of the stationary distribution for the two-point motion starting with odd distance between the two coordinates $x_0,y_0$ of the initial state.  
    (d) the same as (c) only on a logarithmic scale.  $\rate_1=6$, $\rate_2=3.5, \rate_3=0.4, \rate_4=0.0105$. 
    }
    \label{fig:Schloegl_stat_distr}
\end{figure}

    %

\section{Conclusion}
\label{sec:conclusion}
We have introduced the phenomenon of (partial) time-shifted synchronization for reaction jump processes, using their description via the augmented and embedded Markov chain whose properties as a random dynamical system can be specified through the structure of the corresponding random attractor. As a first example we have given a full proof of partial synchronization for the birth-death process, finding the random attractor of the embedded chain to be a random periodic orbit of period $2$. We have demonstrated that for extensions of this basic example, such as the Schl\"{o}gl model, one may expect a similar structure; however, there is an apparent difference in the relation of the two random points.

Depending on the dimensionality of the system, i.e.~the number of different species, or other types of reaction rates, e.g.~Michaelis-Menten,  we expect various forms of (partial) synchronization and random periodic behavior for general reaction systems and leave it as a research direction for the future to work towards a categorization of such processes and their corresponding chains in terms of random dynamical systems theory. The main goal of this paper has been to relate synchronization phenomena for the different formalizations of the chemical reaction process and to give a first complete and rigorous description of a classical example. 
The strategy of understanding the two-point motion, establishing the attractor via a unique stationary distribution with sufficient decay and then specifying the structure of the attractor and the statistical equilibrium supported there,  based on the two-point motion analysis, may well be generalizable. In particular, one may consider time-shifted synchronization for general Markov jump processes, not necessarily given by reaction networks, via the RDS description of the related space-time Markov chains.
Moreover, an intriguing point of more detailed investigation will concern the quantification and statistics of the delay times found for time-shifted synchronization, which may be of high interest also for the applied side of chemical reaction processes.

Furthermore, our work has brought up additional questions that remain open, to our knowledge. Can one find an example coming from a chemical reaction network with no (partial) synchronization at all, i.e.~where each synchronization class is a singleton? May one describe bifurcations of the attractor, for example in an easy model such as Schl\"{o}gl's, via variation of the parameters? How are attractors of the described processes related to the attractors of the corresponding volume-scaled systems, i.e.~the Langevin SDEs or the reaction rate ODEs?
Additionally, from the RDS point of view it will also be intriguing to give general criteria for weak attractors being (strong) pullback attractors in discrete state spaces. 
In summary, we see this work as a first step towards a deeper structural understanding of reaction jump processes via RDS theory and, conversely, a motivation for a broader understanding of random attractors withing the dichotomy between the discrete and the continuous.

\subsection*{Acknowledgements}
We acknowledge the support of Deutsche Forschungsgemeinschaft (DFG) through CRC 1114 and under Germany's Excellence Strategy -- The Berlin Mathematics Research Center MATH+ (EXC-2046/1, project ID 390685689).  
M.~E. additionally thanks the DFG-funded SPP 2298 for supporting his research.
G.~O.-M. also thanks FU Berlin for a 3-month Forschungsstipendium. The authors gratefully acknowledge Dennis Chemnitz for fruitful discussions.

    

\bibliography{bibliography}
\bibliographystyle{abbrvnat}
\end{document}